\newcommand{\rem}[1]{}
\newcommand \al{\alpha}
\newcommand\be{\beta}
\newcommand\ga{\gamma}
\newcommand\et{\eta}
\renewcommand\th{\theta}
\newcommand\la{\lambda}
\newcommand\rh{\rho}
\newcommand\si{\sigma}
\newcommand\ph{\varphi}
\newcommand\ps{\psi}
\newcommand\om{\omega}
\newcommand\Ga{\Gamma}
\newcommand\Th{\Theta}
\newcommand\Om{\Omega}
\newcommand\resp{resp.\ }
\newcommand\ie{i.e.\ }
\newcommand\oo{{\infty}}
\renewcommand\o{\circ}
\newcommand\x{\times}
\newcommand\on{\operatorname}
\newcommand\Ad{\on{Ad}}
\newcommand\ad{\on{ad}}
\newcommand\Emb{\on{Emb}}
\newcommand\symp{\on{symp}}
\newcommand\Rot{\on{Rot}}
\newcommand\ex{\on{ex}}
\newcommand\Den{\on{Den}}
\newcommand\hor{\on{hor}}
\newcommand\vol{\on{vol}}
\newcommand\iso{\on{iso}}
\newcommand\N{\mathcal{N}}
\newcommand\M{\mathcal{M}}
\newcommand\Diff{\on{Diff}}
\newcommand\per{{\perp_g}}
\newcommand\Vol{\on{Vol}}
\newcommand\ii{{\bf i}}
\newcommand\pr{\on{pr}}
\newcommand\ham{\on{ham}}
\newcommand\quant{\on{quant}}
\newcommand\Lag{\on{Lag}}
\newcommand\g{\mathfrak g}
\newcommand\h{\mathfrak h}
\newcommand\Gr{\on{Gr}}
\newcommand\dd{{\bf d}}
\newcommand\ZZ{\mathbb Z}
\newcommand\RR{\mathbb{R}}
\newcommand\J{\mathbf{J}}
\newcommand\X{\mathfrak X}
\newcommand\C{\mathcal C}
\renewcommand\O{\mathcal O}
\newcommand\U{\mathcal U}
\newenvironment{proof}[1][Proof]{\noindent\textbf{#1.} }{\ \rule{0.5em}{0.5em}}
\begin{document}

\newtheorem{theorem}{Theorem}[section]
\newtheorem{definition}[theorem]{Definition}
\newtheorem{lemma}[theorem]{Lemma}
\newtheorem{remark}[theorem]{Remark}
\newtheorem{proposition}[theorem]{Proposition}
\newtheorem{corollary}[theorem]{Corollary}
\newtheorem{example}[theorem]{Example}


\title{Isotropic submanifolds and coadjoint orbits\\ of the Hamiltonian group}
\author{Fran\c{c}ois Gay-Balmaz$^{1}$ and Cornelia Vizman$^{2}$ }

\addtocounter{footnote}{1}
\footnotetext{CNRS/LMD, \'Ecole Normale Sup\'erieure de Paris, France.
\texttt{francois.gay-balmaz@lmd.ens.fr }
\addtocounter{footnote}{1}}

\footnotetext{Department of Mathematics,
West University of Timi\c soara, 
300223-Timi\c soara, Romania.
\texttt{vizman@math.uvt.ro}
\addtocounter{footnote}{1} }

\date{ }
\maketitle
\makeatother


\noindent \textbf{AMS Classification:} 53D20; 37K65; 58D10

\noindent \textbf{Keywords:} momentum map, coadjoint orbits, symplectic reduction, nonlinear Grassmannian, Lagrangian submanifold, isotropic submanifold, prequantization.

\begin{abstract}
We describe a class of coadjoint orbits of the group of Hamiltonian diffeomorphisms of a symplectic manifold $(S,\om)$ by implementing symplectic reduction for the dual pair associated to the Hamiltonian description of ideal fluids. The description is given in terms of nonlinear Grassmannians (manifolds of submanifolds) with additional geometric structures. Reduction at zero momentum yields the identification of coadjoint orbits with Grassmannians of isotropic volume submanifolds, {slightly} generalizing the results in \cite{W90} and \cite{L}. At the other extreme, the case of a nondegenerate momentum recovers the identification of connected components of the nonlinear symplectic Grassmannian with coadjoint orbits, thereby \textcolor{black}{recovering} the result of \cite{Haller-Vizman}. 
\textcolor{black}{We also comment on the intermediate cases which correspond to new classes of coadjoint orbits}.
The description of these coadjoint orbits as well as their orbit symplectic form is obtained in a systematic way by exploiting the general properties of dual pairs of momentum maps.
We also show that whenever the symplectic manifold $(S,\om)$ is prequantizable, 
the coadjoint orbits that consist of isotropic submanifolds with 
total volume $a\in\ZZ$ are prequantizable. The prequantum bundle is constructed explicitly and, in the Lagrangian case, recovers the Berry bundle constructed in \cite{W90}.
\end{abstract}



\section{Introduction and preliminaries}\label{s1}

This paper concerns the description and prequantization of a class of infinite dimensional coadjoint orbits of the group $ \operatorname{Diff}_{\rm ham}(S)$ of Hamiltonian diffeomorphisms of a symplectic manifold $(S, \omega )$ and of its central extension, the identity component of the group $ \Diff_{\quant}(P)$ of quantomorphisms of  the prequantum bundle $P \rightarrow S$, when $ \omega $ is prequantizable. \textcolor{black}{We obtain our results by a systematic use of the process of symplectic reduction applied to the dual pair of momentum maps associated to the Hamiltonian description of ideal fluids}.

Several descriptions of classes of coadjoint orbits have been already given.
In \cite{W90} a foliation of the space of Lagrangian submanifolds of $S$,
whose leaves  consist of Lagrangian submanifolds that can
be joined by flowing along Hamiltonian vector fields, is considered,
together with a corresponding isodrastic foliation of the space of weighted Lagrangian submanifolds (here a weight is a smooth density of total measure 1).  
It is argued  heuristically that the leaves that consist of positively weighted Lagrangian submanifolds can be identified with coadjoint orbits of the group of Hamiltonian diffeomorphisms. 
All these facts are showed rigorously in \cite{L}, where these coadjoint orbits
are obtained by {symplectic reduction} on the manifold of embeddings into $S$.
More general coadjoint orbits, that consist of positively weighted isotropic submanifolds, are also obtained.
In \cite{Haller-Vizman} another class of coadjoint orbits of the group of Hamiltonian diffeomorphisms was identified with connected components of the nonlinear  Grassmannian of symplectic submanifolds of $S$. 

\medskip

In the present paper, we describe in terms of nonlinear Grassmannians a class of coadjoint orbits of the group of Hamiltonian diffeomorphisms of a symplectic manifold by implementing symplectic reduction for the dual pair of momentum maps associated to the Hamiltonian description of ideal fluids \cite{Marsden-Weinstein}. This class contains as particular cases the above two descriptions of coadjoint orbits.

To obtain this result, we use the reformulation of the dual pair of ideal fluids given in \cite{GBVi2011} that allows a rigorous proof of the dual pair properties. This reformulation consists in restricting the action of symplectic, \resp volume preserving, diffeomorphisms to the subgroups of Hamiltonian, \resp exact volume preserving, diffeomorphisms, and to consider the prequantization and Ismagilov central extensions of these subgroups, respectively. As a consequence of our approach, we need to impose an extra condition, namely, the vanishing of the first cohomology of the submanifolds or the exactness of the symplectic form. 

The dual pair property is crucially used to identify the symplectic reduced space relative to one group action with coadjoint orbits of the other group. To this end, we need to formulate two general results on dual pairs of momentum maps, that extend the results of \cite{Balleier-Wurzbacher} and that apply to the ideal fluid dual pair.

In the case of the class of coadjoint orbits 
\textcolor{black}{of weighted isotropic submanifolds} obtained in \cite{W90} and \cite{L}, our approach yields much concrete expressions for the tangent spaces to the coadjoint orbits and hence, explicit formulas for the orbit symplectic form.
Assuming that the symplectic manifold $(S,\om)$ is prequantizable, with prequantum bundle $P\to S$,
we also show that these coadjoint orbits are prequantizable whenever
the total volume $a$ of the isotropic submanifolds is an integer, and construct explicitly the prequantum bundle. It is the space of horizontal submanifolds of $P$ that cover isotropic submanifolds of $S$ in the coadjoint orbit, factorized by the action
of $a$th roots of unity induced by the principal circle action on $P$.
In the Lagrangian case we get Planckian submanifolds \cite{Souriau} (Legendre submanifolds of the contact manifold $P$), thus \textcolor{black}{generalising} the result from \cite{W90}
that the nonlinear Grassmannian  of weighted Lagrangian submanifolds 
 is prequantizable \textcolor{black}{when $a=1$}. In this case, the prequantum bundle recovers the Berry bundle of \cite{W90}.


\color{black} 
\paragraph{Contribution of the paper.} To guide the reader and provide a quick overview of our results, we briefly list here the main contributions of the paper.
\begin{itemize}
\item We present a new and systematic derivation for the identification of coadjoint orbits of the group of Hamiltonian diffeomorphisms with particular nonlinear Grassmannians, see Theorems \ref{unul} and \ref{doiu}. Our approach unifies earlier identifications, given in \cite{W90} and \cite{L} for  spaces of isotropic volume submanifolds \textcolor{black}{with volume one} and in \cite{Haller-Vizman} for spaces of symplectic submanifolds. \textcolor{black}{We also slightly extend the class of coadjoint orbits identified in these earlier works by considering Grassmannians of isotropic volume submanifolds of arbitrary fixed volume and Grassmannians of (weighted) presymplectic submanifolds}. Our approach allows a concrete description of the tangent spaces of the Grassmannians and of the orbit symplectic forms. 

\item This derivation is based on the recognition that certain dual pairs of momentum maps yield, under some conditions, an isomorphism between symplectically reduced spaces and coadjoint orbits, as explained in Section \ref{s4}. The dual pair of momentum maps that is fundamental for this work, is the dual pair associated to the Hamilton description of ideal fluids, as explained in Section \ref{s5}.

\item We prove that, under natural conditions, these coadjoint orbits are prequantizable and we concretely describe the prequantum bundle. 
This result extends to the isotropic case the results obtained in \cite{W90} for the Lagrangian case. This is the content of Section \ref{s7}.

\item We describe the coadjoint orbits and their prequantification for the case of an exact symplectic manifold, see Theorem \ref{tre} and Proposition \ref{prequant_exact}. In this case we can treat the case of submanifolds with nontrivial cohomology.
\end{itemize}
\color{black}

The goal of this section is to carry out symplectic reduction for the right momentum map  of the ideal fluid dual pairs \eqref{MW_GBVi_dual_pair}, \resp \eqref{MW_GBVi_dual_pair_compact}, in order to describe coadjoint orbits of the group of Hamiltonian diffeomorphisms in terms of nonlinear Grassmannians. The case of zero momentum allows us to obtain Grassmannians of isotropic volume submanifolds as coadjoint orbits, slightly generalizing the results in \cite{W90} and \cite{L}. At the other extreme, the case of a nondegenerate momentum recovers the identification of connected components of the nonlinear symplectic Grassmannian with coadjoint orbits, thereby recovering the result of \cite{Haller-Vizman}.
Some comments about the intermediate cases will be given. Our approach naturally yields concrete expressions for the tangent spaces to the orbits and hence, explicit formulas for the orbit symplectic form.



\paragraph{Plan of the paper.} In the remainder of this Introduction, we recall some facts concerning nonlinear Grassmannians and we review 
two central extensions of groups of diffeomorphisms.
\textcolor{black}{At the end of the introduction we provide a glossary that contains a list of notations for most of the mathematical objects used throughout this article.}
In Section \ref{s4} we consider two situations, relevant in the infinite dimensional setting, in which symplectic reduction in a dual pair of momentum maps
provides coadjoint orbits.
In Section \ref{s5} we review the ideal fluid dual pair on $\Emb(M,S)$ 
when $H^1(M)=0$. 
Symplectic reduction with respect to the right action in the ideal fluid dual pair is done in Section \ref{s6}
and is used to identify each connected component of the nonlinear Grassmannian of  isotropic volume submanifolds with a coadjoint orbit.
In Section \ref{s7}, we show that their connected components are prequantizable
coadjoint orbits, when $S$ is a prequantizable symplectic manifold.
Finally, in Section \ref{s8},
starting from  the ideal fluid dual pair on $\Emb(M,S)$ when $S$ is exact symplectic,  we redo everything for exact isotropic volume submanifolds.
The appendix  discusses the Fr\'echet manifold structure on several 
nonlinear Grassmannians.


\paragraph{Nonlinear Grassmannians.} Let $S$ be a manifold, let $M$ be a compact $k$-dimensional manifold, and consider the Fr\'echet manifold $\Emb(M,S)$ of all embeddings  of $M$ into $S$. The tangent space at $f\in\Emb(M,S)$ is the space of vector fields 
on $S$ along $f$. The group $\Diff(M)$ of diffeomorphisms of $M$ acts on $\Emb(M,S)$ by composition on the right.
The associated quotient map $\pi:f\in\Emb(M,S)\mapsto f(M)\in\Gr^M(S)$ 
defines a principal bundle with structure group $\Diff(M)$
over the \textit{nonlinear Grassmannian $\Gr^M(S)$ of (embedded) submanifolds of $S$ of type $M$},
which  is known to be a Fr\'echet manifold \cite{KrMi97} \cite{Molitor} (see also the appendix).
The tangent space to $ \operatorname{Gr}^M(S)$ at $N=f(M)$ 
is given by the space of smooth sections of the normal bundle $TN^\perp:=(TS|_N)/TN$. The tangent map to the projection $ \pi $ reads
\begin{equation}\label{bigp}  
T_f \pi : v \circ f \in T_f \operatorname{Emb}(M,S) \mapsto v|^{\perp}_{f(M)} \in T_{f(M)} \operatorname{Gr}^M(S), \quad v \in \mathfrak{X}  (S).
\end{equation}

The \textit{nonlinear Grassmannian of volume submanifolds of type $(M,\mu)$} is defined by
\[
{\Gr^{M,\mu}(S):=\left\{(N,\nu) : N\in  \Gr^M(S), \nu\in\Vol(N),\int_N\nu=\int_M\mu\right\}.}
\]
The map 
\begin{equation}\label{pim}
\pi^\mu:f\in\Emb(M,S)\;\longmapsto\; (f(M),f_*\mu)\in\Gr^{M,\mu}(S)
\end{equation}
is a principal $\Diff_{\vol}(M)$-bundle and the forgetting map $(N, \nu ) \in \Gr^{M,\mu}(S)\to N \in \Gr^{M}(S)$ is a fiber bundle with fiber $\Vol_1(M)$, the space of volume forms of total volume 1. Using a Riemannian metric on $S$, the tangent space to $\Gr^{M,\mu}(S)$ at $(N, \nu )$ is identified with
$T_N\Gr^M(S)\x \mathbf{d} \Om^{k-1}(N)$.
We refer to \cite{GBVi2013} for a detailed study of the Fr\'echet manifold structures, together with the treatment of the more general case when $ \partial M\neq \varnothing$.

\textcolor{black}{We summarise in Table \ref{table} the list of all the infinite dimensional manifolds occurring in the paper.}


\paragraph{The prequantization central extension.}
Let $(S,\om)$ be a prequantizable symplectic manifold, 
\ie there exists a principal circle bundle (the prequantum bundle) $p:P\to S$ with 
principal connection $\al\in\Om^1(P)$ whose curvature is the symplectic form  $\om$, so $\mathbf{d} \al=p^*\om$.
Let us denote by $\Diff_{\ham}(S)$ the group of \textit{Hamiltonian diffeomorphisms} of $(S, \omega )$. Its Lie algebra is the space $\X_{\ham}(S)=\{X_f \in \mathfrak{X}  (S) : \mathbf{i} _{X_f}\om=\dd f, \; f\in C^\oo(S) \}$ of all Hamiltonian vector fields. The \textit{quantomorphism group} is 
the group $\Diff_{\operatorname{quant}}(P)$
of connection preserving automorphisms of $P$. Its Lie algebra is $C^\oo(S)$, endowed with the Poisson bracket  $\{f,g\}=\om(X_g,X_f)$, under the identification of $h\in C^\oo(S)$ with $\xi_h=X_h^{\hor}-(h\o p)E$, where $E$ denotes the infinitesimal generator of the circle action
and hor the horizontal lift.

For connected $S$, the Lie algebra extension 
\begin{equation}\label{q1}
0\to\RR\to C^\oo(S)\to\X_{\ham}(S)\to 0
\end{equation}
integrates to the \textit{prequantization central extension} of $\Diff_{\ham}(S)$ given by \cite{Kostant}\cite{Souriau}
\begin{equation}\label{unu}
1\to S^1\to\Diff_{\quant}(P)_0\to\Diff_{\ham}(S)\to 1,
\end{equation}
where $\Diff_{\quant}(P)_0$ denotes the component of the identity. A version of this exact sequence of groups
for infinite dimensional $S$ can be found in \cite{NV}.
{The universal central extension of the Lie algebra of Hamiltonian vector fields can be found in \cite{JV1}.
Integrability issues were posed in \cite{JV2}.}
 
\paragraph{Ismagilov's central extension.} Let $M$ be a compact $k$-dimensional manifold with volume form $\mu$ and let $\Diff_{\vol}(M)$ be the group of \textit{volume preserving diffeomorphisms} with Lie algebra $\X_{\vol}(M)$ of divergence free vector fields.
We denote by $\Diff_{\ex}(M)$ the subgroup of \textit{exact volume preserving diffeomorphisms}
with Lie algebra $\X_{\ex}(M)$, the Lie algebra of vector fields $X_\al$ admitting a potential form $\al\in\Om^{k-2}(M)$, \ie
$\mathbf{i} _{X_\al}\mu=\dd\al$ \cite{Banyaga}\cite{KrMi97}. 
If $\dim M=2$, then the volume form 
is a symplectic form and we are in the previous paragraph setting.
\color{black}In \cite{Hitchin} it is shown that, when $\dim M=3$, the group of diffeomorphisms
that preserve the equivalence class of a gerbe with curvature $\mu$
also integrates the Lie algebra $\X_{\ex}(M)$.

Assume that $\dim M\ge 3$. \color{black} If $ \mu $ is integral ($\int_M \mu \in \mathbb{Z}  $), the Lichnerowicz Lie algebra extension \cite{Roger}
\begin{equation}\label{q2}
0\to H^{k-2}(M)\to\Om^{k-2}(M)/\dd\Om^{k-3}(M)\to\X_{\ex}(M)\to 0,
\end{equation}
with Lie algebra bracket 
\begin{equation}\label{iimu}
\{[\al],[\be]\}=[ \mathbf{i} _{X_\al} \mathbf{i} _{X_\be}\mu]\text{ on }\Om^{k-2}(M)/\dd\Om^{k-3}(M),
\end{equation}
integrates to \textit{Ismagilov's central extension} \cite{Ismagilov}
\begin{equation}\label{doi}
1\to H^{k-2}(M)/L^*\to\widehat\Diff_{\ex}(M)\to\Diff_{\ex}(M)\to 1,
\end{equation}
where $L^*$ is the dual lattice to the lattice $L\subset H_{k-2}(M,\RR)$ generated by 
a fixed basis of $H_{k-2}(M,\RR)$ consisting of co-dimension two submanifolds of $M$.

\begin{table}[h!]
\centering
 \begin{tabular}{|c | c |} 
 \hline\hline
 Notation & Name \\ [0.25ex] 
 \hline\hline
 $\Diff_{\vol}(M)$ & The group of volume preserving diffeomorphisms of the volume manifold $M$  \\ [0.25ex] 
\hline
 $\Diff_{\ex}(M)$ & The group of exact volume preserving diffeomorphisms of the volume manifold $M$  \\ [0.25ex] 
\hline
 $\Diff_{\symp}(S)$ & The group of symplectic diffeomorphisms 
of the symplectic manifold $S$  \\ [0.25ex] 
\hline
$\Diff_{\ham}(S)$ & The group of Hamiltonian diffeomorphisms 
of the symplectic manifold $S$  \\ [0.25ex] 
\hline\hline
 $\Emb(M,S)$ & The space of embeddings of $M$ into $S$  \\  [0.25ex] 
\hline
 $\Gr^M(S)$ & The Grassmannian of submanifolds of $S$ of type $M$ \\ [0.25ex] 
\hline
 $\Gr_{\symp}^M(S)$ & The Grassmannian of symplectic submanifolds of $S$ of type $M$ \\ [0.25ex] 
\hline
 $\Gr^{(M,\mu)}(S)$ & The Grassmannian of volume submanifolds of $S$ of type $(M,\mu)$ \\ [0.25ex] 
\hline\hline
 $\Emb_{\iso}(M,S)$ & The space of isotropic embeddings of $M$ into $S$  \\  [0.25ex] 
\hline
 $\Gr_{\iso}^M(S)$ & The Grassmannian of isotropic submanifolds of $S$ of type $M$ \\ [0.25ex] 
\hline
 $\Gr_{\iso}^{(M,\mu)}(S)$ & The Grassmannian of isotropic volume submanifolds of $S$ of type $(M,\mu)$ \\ [0.25ex] 
\hline
 $\Emb_{\Lag}(M,S)$ & The space of Lagrangian embeddings of $M$ into $S$  \\  [0.25ex] 
\hline
 $\Gr_{\Lag}^M(S)$ & The Grassmannian of Lagrangian submanifolds of $S$ of type $M$ \\ [0.25ex] 
\hline
 $\Gr_{\Lag}^{(M,\mu)}(S)$ & The Grassmannian of Lagrangian volume submanifolds of $S$ of type $(M,\mu)$ \\ [0.25ex] 
\hline\hline
 $\Diff_{\quant}(P)$ & The quantomorphism group of the prequantum bundle $P$  \\ [0.25ex] 
\hline
 $\Emb_{\hor}(M,P)$ & The space of horizontal embeddings of $M$ into $P$  \\  [0.25ex] 
\hline
 $\Gr_{\hor}^M(P)$ & The Grassmannian of horizontal submanifolds of $P$ of type $M$ \\ [0.25ex] 
\hline
 $\Gr_{\hor}^{(M,\mu)}(P)$ & The Grassmannian of horizontal  volume submanifolds of $P$ of type $(M,\mu)$ \\
 [0.25ex] 
 \hline\hline
 \end{tabular}
\caption{Glossary}\label{table} 
\end{table}

\section{Symplectic reduction in dual pairs}\label{s4}

In this section, we first review the definition of a dual pair by mainly focusing on the particular case of dual pair of momentum maps. 
We first recall from \cite{Balleier-Wurzbacher}  that the reduced symplectic manifolds relative to one of the group action are symplectically diffeomorphic to coadjoint orbits of the other group (Proposition \ref{till}).
Then, we extend this result to specific situations  (Propositions \ref{id1} and \ref{id2}) needed for the treatment of the dual pair associated to the Hamiltonian formulation of ideal fluids \cite{Marsden-Weinstein}, \cite{GBVi2011}.

\paragraph{Dual pairs.}
Let $(S,\omega)$ be a symplectic manifold and $P,Q$ be two Poisson manifolds. A pair of Poisson mappings
\begin{equation*}
P\stackrel{\J_L}{\longleftarrow}(S,\om)\stackrel{\J_R}{\longrightarrow} Q
\end{equation*}
is called a \textit{dual pair} \cite{Weinstein}
if $\ker T\J_L$ and $\ker T\J_R$ are symplectic orthogonal complements of one another:
$(\ker T\J_L)^\om=\ker T\J_R$.
In infinite dimensions, due to the weakness of the symplectic form, one has to impose also the identity $(\ker T\J_R)^\om=\ker T\J_L$ \cite{GBVi2011}.

Suppose now that the two Poisson mappings are momentum maps $\J_R$ and $\J_L$
arising from the commuting Hamiltonian actions of two Lie groups $G$ and $H$ on $S$.
To fix ideas, we will always assume that $H$ acts on the left and $G$ acts on the right on $S$, hence the notations $\J_L$ and $\J_R$. We assume that both momentum maps are equivariant, so that they are Poisson maps with
respect to the Lie-Poisson structure on the dual Lie algebras $\h^*$ and $\g^*$.
The actions are said to be {\it mutually completely orthogonal} \cite{Libermann-Marle} if 
the $G$- and $H$-orbits are symplectic orthogonal to each other:
\begin{equation}\label{mutcomort}
\mathfrak{g}_S=\mathfrak{h}_S^{\om}\quad\text{and}\quad \mathfrak{h}_S=\mathfrak{g}_S^{\om},
\end{equation}
where $\mathfrak{g}_S(s):=\{\xi_S(s)\mid \xi\in\mathfrak{g}\}$
with $\xi_S$ denoting the infinitesimal generator. 
Because $\ker T\J_R=\g_S^\om$,
the identities \eqref{mutcomort} mean that the infinitesimal actions of $\g$ \resp $\h$ 
on level sets of momentum maps $\J_L$ \resp $\J_R$ are transitive.
Hence, if $G$, \resp, $H$ act transitively on (connected components of) level sets of the momentum maps $\mathbf{J}_L$, \resp, $\mathbf{J}_R$, 
then the identities in \eqref{mutcomort} are automatically satisfied.

\begin{remark}
{\rm 
In finite dimensions the identities \eqref{mutcomort} are equivalent to the fact that
\begin{equation}\label{dp}
\h^*\stackrel{\J_L}{\longleftarrow}(S,\om)\stackrel{\J_R}{\longrightarrow}\g^*
\end{equation}
is a dual pair.
This is not the case in infinite dimensions: the free boundary fluid dual pair is a counterexample with $\h_S\subsetneq\g_S^\om$,
as showed in \cite{GBVi2012}.
}
\end{remark}


\paragraph{Symplectic reduction in dual pairs.} 
We now describe the symplectic reduced space for one of the momentum maps (say $ \mathbf{J} _R $) of a dual pair 
associated to mutually completely orthogonal actions.  Throughout this section we assume that, given $\si\in\g^*$ with isotropy group $G_\si$ and isotropy Lie algebra $\g_\si$, the level set $ \mathbf{J} _R ^{-1} (\si)$ is a submanifold of $S$
and that $S_\si:=  \mathbf{J} _R ^{-1} (\si)/{G_\si}$ can be endowed with the quotient manifold structure. In finite dimensions, standard hypotheses can be imposed to guarantee these properties. Since we are working in infinite dimensions, we will need to verify these properties in each of the treated examples.

We remark that, by the symplectic orthogonality  conditions \eqref{mutcomort},
for all $x\in\J_R^{-1}(\si)$,
$$
T_x(\J_R^{-1}(\si))=\ker T_x\J_R=\g_S(x)^\om=\h_S(x).
$$
Since the momentum map $\J_R$ is $G$-equivariant, 
$$\om_x(\xi_S(x),\et_S(x))=-\langle\J_R(x),[\xi,\et]\rangle=-\langle\si,[\xi,\et]\rangle=0,\quad\forall\,\xi\in\g_\si,\;\forall\,\et\in \mathfrak{g},$$ 
so $(\g_\si)_S(x)\subset\g_S(x)^\om=\h_S(x)$. 
In particular, for all $x\in\J_R^{-1}(0)$, one obtains that $\g_S(x)$ is an isotropic subspace of $T_xS$.
Thus the tangent space to the reduced symplectic manifold $S_\si$
at the $G_\si$-orbit $[x]$ of $x\in\J_R^{-1}(\si)$ is given by the quotient vector space $T_{[x]}S_\si=\h_S(x)/(\g_\si)_S(x)$.
The reduced symplectic form on $S_ \sigma $ is
\begin{equation}\label{omega_0}
(\om_\si)_{[x]}([\xi_S(x)],[\et_S(x)])=\om_x (\xi_S(x),\et_S(x))
=\langle\J_L(x),[\xi,\et]\rangle,
\end{equation}
for all $\xi,\et\in\h$. It is well defined since $\om$ vanishes on pairs of mixed generators for the two actions by \eqref{mutcomort}.

The next result can be extracted from Theorem 2.8 in \cite{Balleier-Wurzbacher}.
Although it is shown there for Howe dual pairs of momentum maps, 
the proof works for the following setting, in infinite dimensions too.

\begin{proposition}\label{till}
Let $G$ and $H$ be Lie groups with commuting mutually orthogonal symplectic actions on
the symplectic manifold $(S,\om)$. {We assume that these actions admit equivariant momentum maps}.
If the level sets of the momentum maps of each action are the orbits of the other action,
then the symplectic reduced spaces for one action are symplectically diffeomorphic to the coadjoint orbits for the other one.
\end{proposition}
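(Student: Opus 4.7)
My strategy is to exhibit the symplectic diffeomorphism explicitly by means of the other momentum map. Fix $\sigma\in\g^*$ and $x_0\in\J_R^{-1}(\sigma)$, and consider the candidate map
\begin{equation*}
\Phi:S_\si=\J_R^{-1}(\si)/G_\si\longrightarrow \O_{\J_L(x_0)}\subset\h^*,\qquad [x]\longmapsto \J_L(x).
\end{equation*}
The proof will consist in checking, in order, (i) well-definedness, (ii) the image lies in the coadjoint orbit, (iii) bijectivity, (iv) that the tangent map is an isomorphism intertwining the two symplectic forms, and finally (v) smoothness issues.

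For (i), the commutation of the $G$- and $H$-actions together with the mutual orthogonality \eqref{mutcomort} forces $\J_L$ to be $G$-invariant: for $\xi\in\g$ and $\et\in\h$ one has $\dd\langle\J_L,\et\rangle(\xi_S)=-\om(\et_S,\xi_S)=0$ by \eqref{mutcomort}, and hence $\J_L$ is constant along the $G$-orbits, in particular the $G_\si$-orbits. For (ii), the hypothesis that level sets of $\J_R$ are the $H$-orbits gives $\J_R^{-1}(\si)=H\cdot x_0$, so by $H$-equivariance of $\J_L$ we have $\J_L(\J_R^{-1}(\si))=\Ad^*_{H}\J_L(x_0)=\O_{\J_L(x_0)}$, which shows surjectivity onto the coadjoint orbit at the same time. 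For (iii), if $\J_L(x)=\J_L(y)$ with $x,y\in\J_R^{-1}(\si)$, then by the dual hypothesis that level sets of $\J_L$ are $G$-orbits we have $y=g\cdot x$ for some $g\in G$; the relation $\si=\J_R(y)=\Ad^*_{g^{-1}}\J_R(x)=\Ad^*_{g^{-1}}\si$ forces $g\in G_\si$, so $[x]=[y]$.

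The symplectic comparison is the heart of the argument but is essentially forced by formula \eqref{omega_0}. Identifying $T_{[x]}S_\si=\h_S(x)/(\g_\si)_S(x)$ as in the discussion preceding the statement, the tangent map of $\Phi$ sends $[\et_S(x)]\mapsto T_x\J_L(\et_S(x))=-\ad^*_\et \J_L(x)$ by $H$-equivariance of $\J_L$; this is precisely the standard parametrisation of the tangent space to $\O_{\J_L(x)}$, so $T_{[x]}\Phi$ is a linear isomorphism. Pulling back the Kirillov--Kostant--Souriau form gives
\begin{equation*}
(\Phi^*\om_\KKS)_{[x]}([\xi_S(x)],[\et_S(x)])=\langle\J_L(x),[\xi,\et]\rangle,
\end{equation*}
which by \eqref{omega_0} equals $(\om_\si)_{[x]}([\xi_S(x)],[\et_S(x)])$.

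The routine bits are quickly disposed of in finite dimensions by the standard Marsden--Weinstein/Meyer reduction machinery, and the algebraic content above is exactly what is recorded in Theorem~2.8 of \cite{Balleier-Wurzbacher}. The step that I expect to be the only genuine difficulty, and the one that requires separate verification in each of the infinite-dimensional applications later in the paper, is that $\Phi$ is a diffeomorphism of Fr\'echet manifolds: this requires knowing that $\J_R^{-1}(\si)$ and the quotient $S_\si$ carry smooth structures, and that the coadjoint orbit carries a smooth structure for which $\J_L$ is a smooth submersion onto it. These points cannot be handled by a general theorem in infinite dimensions and so will be checked by hand in Sections \ref{s5}--\ref{s6}; at the abstract level of this proposition they are simply absorbed into the blanket assumptions, so the proof is complete as soon as (i)--(iv) are verified.
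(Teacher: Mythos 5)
Your proposal is correct and follows essentially the same route as the paper: the paper cites Theorem 2.8 of \cite{Balleier-Wurzbacher} for Proposition \ref{till} itself, but its fully written-out proof of the closely related Proposition \ref{id1} proceeds exactly as you do — inducing $\bar\J_L([x])=\J_L(x)$ on the reduced space, getting injectivity from transitivity of $G$ on $\J_L$-level sets, surjectivity onto the coadjoint orbit from transitivity of $H$ on $\J_R$-level sets, and matching the reduced form with the Kirillov--Kostant--Souriau form via \eqref{omega_0} (your inline computation is the content of the paper's Lemma \ref{easy}). The smoothness caveats you defer are likewise absorbed into the paper's standing assumptions preceding the proposition.
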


{Note that although we are interested in the coadjoint orbits of one group, say $H$, we still need the transitivity hypotheses for both group actions.}

In order to identify reduced symplectic manifolds with coadjoint orbits in the context of the dual pair associated to ideal fluids (see Section \ref{s5}), we need to consider a slightly different setting, namely the case where one of the groups does not act transitively on the level sets of the momentum map, but acts transitively on \textit{the connected components} of the level sets. We will use the following observation:

\begin{lemma}\label{easy} Consider a transitive $G$-action on a symplectic manifold $(S, \omega )$. Suppose that this action admits an equivariant and injective momentum map $ \mathbf{J} :S \rightarrow \mathfrak{g}  ^\ast $. Then 
the image of $ \mathbf{J} $ is a coadjoint orbit of $G$. Moreover, the pull-back of the orbit symplectic form by $\J$ is $ \omega $.
\end{lemma}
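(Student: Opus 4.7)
The plan is to treat the two assertions in turn, exploiting equivariance of $\mathbf{J}$ together with the transitivity of the $G$-action. Equivariance reads $\mathbf{J}(g\cdot s)=\mathrm{Ad}^*_{g^{-1}}\mathbf{J}(s)$, so the image $\mathbf{J}(S)$ is invariant under the coadjoint action of $G$. Since $G$ acts transitively on $S$, any two points of $\mathbf{J}(S)$ differ by the coadjoint action of some group element, and hence $\mathbf{J}(S)$ is precisely a single coadjoint orbit $\mathcal{O}\subset\g^*$. Injectivity of $\mathbf{J}$ then promotes the resulting surjection $S\twoheadrightarrow\mathcal{O}$ to a $G$-equivariant bijection onto $\mathcal{O}$.

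For the pull-back identity, I would work at a fixed point $s\in S$. Transitivity gives $T_sS=\g_S(s)$, so it suffices to verify the equality on pairs of infinitesimal generators $\xi_S(s),\eta_S(s)$ with $\xi,\eta\in\g$. The defining relation $\mathbf{i}_{\xi_S}\omega=\mathbf{d}\langle\mathbf{J},\xi\rangle$ of the momentum map gives
$$\omega_s(\xi_S(s),\eta_S(s))=\langle T_s\mathbf{J}(\eta_S(s)),\xi\rangle,$$
while infinitesimal equivariance gives $T_s\mathbf{J}(\eta_S(s))=-\ad^*_\eta\mathbf{J}(s)$. Combining these yields
$$\omega_s(\xi_S(s),\eta_S(s))=\langle\mathbf{J}(s),[\xi,\eta]\rangle,$$
which is exactly the value of the KKS form of $\mathcal{O}$ at $\mathbf{J}(s)$ evaluated on the pushed-forward vectors $T_s\mathbf{J}(\xi_S(s))$ and $T_s\mathbf{J}(\eta_S(s))$; this is the asserted identity $\mathbf{J}^*\omega_{\mathcal{O}}=\omega$.

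The role played by injectivity, which I expect to be the only subtle point to nail down (particularly in the infinite-dimensional Fr\'echet setting relevant to the rest of the paper), is to upgrade $\mathbf{J}\colon S\to\mathcal{O}$ from a bijection of sets to a genuine symplectic diffeomorphism: any $v\in\ker T_s\mathbf{J}$ would satisfy $\omega_s(v,\cdot)=(\mathbf{J}^*\omega_{\mathcal{O}})_s(v,\cdot)=0$, forcing $v=0$ by non-degeneracy of $\omega$, so $T_s\mathbf{J}$ is injective at each $s$. The genuine obstacle in infinite dimensions is that the KKS form on $\mathcal{O}$ is only weakly non-degenerate a priori, but since the computation above is entirely pointwise and algebraic it goes through unchanged in that setting as well.
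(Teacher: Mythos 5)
Your argument is correct, and it is essentially the only sensible one: the paper itself states Lemma \ref{easy} as an unproved ``observation,'' so there is no written proof to compare against, but the two steps you carry out (equivariance plus transitivity forcing the image to be a single coadjoint orbit, and the pointwise computation $\omega_s(\xi_S(s),\eta_S(s))=\langle\mathbf{J}(s),[\xi,\eta]\rangle$ identifying $\omega$ with the pulled-back KKS form on pairs of generators) are exactly what the authors use implicitly, e.g.\ in formula \eqref{omega_0} and in the proof of Proposition \ref{id1}. Your closing remark is also the right one to flag: in the Fr\'echet setting ``transitive'' must be read as including infinitesimal transitivity ($T_sS=\mathfrak{g}_S(s)$), which is precisely what the paper verifies separately in Lemmas \ref{lem1} and \ref{lem2} before invoking this observation.
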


\color{black}
\begin{proposition}\label{id1}
Let $(S,\om)$ be a symplectic manifold with commuting mutually orthogonal symplectic actions of $G$ and $H$, actions that admit equivariant momentum maps.
We assume that $G$ acts transitively on the level sets of $\J_L$, while
$H$ is connected and acts transitively on the connected components of level sets of $\J_R$.

Then each connected component of the reduced symplectic manifold
$S_\si=\J_R^{-1}(\si)/G_\si$  at $\si\in\g^*$, with $G_\si$ the stabilizer of $\si$,
is symplectically diffeomorphic to a coadjoint orbit of $H$.
\end{proposition}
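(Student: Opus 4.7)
The plan is to promote $\J_L$ to a well-defined, $H$-equivariant, injective map $\bar\J_L$ on each connected component of $S_\sigma$, and then to invoke Lemma \ref{easy} to identify that component with a coadjoint orbit of $H$. The structure of the argument parallels that of Proposition \ref{till}, but the weakened transitivity hypothesis on $H$ forces us to work one connected component at a time.

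First I would check that $\J_L$ descends to $S_\sigma$ and that $H$ acts on $S_\sigma$. The mutual orthogonality $\g_S = \h_S^\om$ gives $\om(\xi_S,\eta_S) = 0$ for all $\xi \in \g$, $\eta \in \h$, which is equivalent to the Poisson commutativity $\{\langle\J_R,\xi\rangle,\langle\J_L,\eta\rangle\} = 0$. It follows that $\J_L$ is invariant under the flow of every $\xi_S$, and symmetrically that $\J_R$ is infinitesimally $\h$-invariant, hence $H$-invariant by the connectedness of $H$. Consequently $H$ preserves $\J_R^{-1}(\sigma)$ and descends to an $H$-action on $S_\sigma$, while $\J_L|_{\J_R^{-1}(\sigma)}$ descends to an $H$-equivariant map $\bar\J_L : C \to \h^*$ on each connected component $C$ of $S_\sigma$.

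Second I would verify the hypotheses of Lemma \ref{easy} for this $H$-action on $C$. Transitivity of $H$ on $C$ follows from the observation that each connected component of $S_\sigma$ is the $\pi$-image of a single connected component of $\J_R^{-1}(\sigma)$, combined with the hypothesis that $H$ acts transitively on those components. The momentum map property $\mathbf{d}\langle\bar\J_L,\xi\rangle = \iota_{\xi_{S_\sigma}}\om_\sigma$ is precisely what formula \eqref{omega_0} for the reduced symplectic form asserts. For injectivity of $\bar\J_L$, the key point is that transitivity of $G$ on level sets of $\J_L$ gives $\J_L^{-1}(\mu) = G \cdot x_0$, and then $G$-equivariance of $\J_R$ forces $\J_L^{-1}(\mu) \cap \J_R^{-1}(\sigma) = G_\sigma \cdot x_0$, a single $G_\sigma$-orbit, which corresponds to a single point of $S_\sigma$.

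Applying Lemma \ref{easy} on each connected component then delivers the claimed symplectic diffeomorphism with a coadjoint orbit of $H$. The most delicate point I anticipate is the connectedness bookkeeping in the second step: $G$, $G_\sigma$, and the level sets of $\J_R$ may all be disconnected, so one must carefully verify that non-identity components of $G_\sigma$ neither spoil the descent of $\bar\J_L$ nor interfere with the identification between $H$-orbits and connected components of the reduced space.
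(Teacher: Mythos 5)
Your proposal is correct and follows essentially the same route as the paper: descend $\J_L$ to an $H$-equivariant map $\bar\J_L$ on $S_\si$, deduce injectivity from the $G$-transitivity on level sets of $\J_L$ together with the $G$-equivariance of $\J_R$ (which forces the connecting element into $G_\si$), and then apply Lemma \ref{easy} component by component. The only place where you diverge is the transitivity of $H$ on a connected component $\C$ of $S_\si$: you assert as an ``observation'' that $\C$ is the image under $\pi$ of a \emph{single} connected component of $\J_R^{-1}(\si)$, whereas the paper proves transitivity by showing that the bi-orbit $B_0=H\cdot x_0\cdot G$ and its complement in $\pi^{-1}(\C)$ are both open and saturated, so that $\pi(B_0)$ is clopen in the connected set $\C$. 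Your observation is in fact true, and is exactly the content that the paper's bi-orbit argument supplies; since it is the crux of the proof, it should not be left as an assertion. It follows quickly from two facts: the quotient map $\pi$ is open, and the saturation $\pi^{-1}(\pi(K))=\bigcup_{g\in G_\si}g\cdot K$ of a connected component $K$ is a union of connected components of the (locally connected) level set, hence clopen; therefore $\pi(K)$ is a nonempty clopen connected subset of $S_\si$, i.e.\ a connected component, and since $H$ (being connected) preserves $K$ and acts transitively on it, $\C=\pi(K)=H\cdot[x_0]$. With that sentence added, your proof is complete and equivalent to the paper's.
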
 
\color{black}

\begin{proof}
Since the two actions on $S$ commute, the reduced manifold $S_\si$ still admits a Hamiltonian $H$-action
with injective equivariant momentum map given by
\begin{equation}\label{jele}
\bar\J_L:S_\si=\J_R^{-1}(\si)/G_\si\to\h^*, \quad \bar\J_L([x])=\J_L(x).
\end{equation}
It is $H$-equivariant by the $H$-equivariance of $\J_L$
and injective because $G$ acts transitively on the level sets of $\J_L$.
Indeed, $\bar\J_L([x_1])=\bar\J_L([x_2])$ with $x_1,x_2\in\J_R^{-1}(\si)$
implies the existence of $g\in G$ with $g\cdot x_1=x_2$.
In particular, by the $G$-equivariance of $\J_R$, we have $\Ad^*_g(\si)=\si$,
so $g\in G_\si$ and $[x_1]=[x_2]$.

Let $\C$ be the connected component of the reduced manifold $S_\si$ containing the element $[x_0]$. 
If we show that $H$ acts transitively on $\C$, then  Lemma \ref{easy} implies
that $\C$, with its reduced symplectic form,
can be identified via $\bar\J_L$
with a coadjoint orbit of $H$, endowed with the orbit symplectic form.

We know that $H$ acts transitively on the connected components of
the level set $\J_R^{-1}(\si)$,
so, since $H$ is connected, the decomposition of $\J_R^{-1}(\si)$ into $H$-orbits
coincides with its decomposition into connected components.
It follows that the bi-orbit $B_0=H\cdot x_0\cdot G$, being $H$-saturated, 
is an open subset of $\J_R^{-1}(\si)$.
Let $\pi:\J_R^{-1}(\si)\to S_\si$ denote the canonical projection.
The complement $B_1=\pi^{-1}(\C)-B_0$ is $H$-saturated, hence open in $\J_R^{-1}(\si)$.
Indeed, for $x\in B_1$ and $h\in H$, we see that $h\cdot x\notin B_0$ (since $s\notin B_0$)
and $\pi(h\cdot x)\in\C$ (since $\pi(H\cdot  x)\subset\C$, for 
$\pi(x)\in\C$).

We get a disjoint decomposition of the connected set $\C$ into two open subsets 
$\pi(B_0)\ne\varnothing$ and $\pi(B_1)$.
Indeed, on one hand, $B_0$ and $B_1$ are both open and $G$-saturated, 
hence $\pi(B_0)$ and $\pi(B_1)$ are open,
on the other hand they have empty intersection 
because $B_0$ and $B_1$ are $G$-saturated with empty intersection.
It follows that the connected component $\C$ is the $H$-orbit $\pi(B_0)=H\cdot [x_0]$.
\end{proof}

\paragraph{Non-equivariant momentum map.}
We will also need another version of the result that relates symplectically reduced manifolds and coadjoint orbits, in the case of a non-equivariant momentum map,
but without passing to the corresponding affine action.
If $\J_R:S\to \g^*$ is non-equivariant, one can always find a central extension $\hat \g$ of $\g$ such that $\J_R$ extends to a 
$\hat\g$-equivariant momentum map $\J_R:S\to\hat\g^*$.
We shall make a weaker assumption than the existence of a  Lie group extension of $G$
that integrates $\hat\g$, namely  we shall assume the existence of  a $G$-action on $\hat\g^*$
that integrates the coadjoint action of $\g$ on $\hat\g^*$
(a replacement for the coadjoint action in the extended group).
In the special case when $\g$ is a perfect Lie algebra,
this is equivalent to the $G$-equivariance of the Lie algebra cocycle on $\g$
that describes the central extension $\hat\g$ \cite{Neeb}.
In the particular case where the  Lie algebra extension $\hat\g$ integrates to a Lie group extension $\hat G$ of $G$ and $\J_R$ is equivariant,
we can perform symplectic reduction at $\si\in\hat\g^*$
for the $\hat G$-action.
Because of the non-connectedness of $G$, the extension $\hat G$
might be an abelian extension that is non-central (of course the pullback to the identity component of $G$ is still a central extension).
Still the isotropy group of $\si$ is a subgroup of $G$
and the ordinary reduced symplectic manifold  at $\si$ 
is the quotient of $\J_R^{-1}(\si)$ by its action.

One can proceed similarly even if no extension $\hat G$ exists.
Let $G_\si$ be the stabilizer of $\si\in\hat\g^*$
for the $G$-action that integrates the coadjoint action of $\g$ on $\hat\g^*$.
If $\J_R$ is $G$-equivariant for the action above,
than the quotient $S_\si= \J_R^{-1}(\si)/G_\si$ 
can be endowed with a symplectic form $\om_\si$ 
whose pullback to the level set of $\J_R$
is the restriction of the symplectic form $\om$.
In the particular case where the Lie algebra extension $\hat\g$ integrates to a Lie group extension $\hat G$ of $G$, then $S_\si$
is the ordinary reduced symplectic manifold  at $\si$ for the $\hat G$-action.

\color{black}
\begin{proposition}\label{id2}
Let $(S,\om)$ be a symplectic manifold with commuting mutually orthogonal symplectic actions of $G$ and $H$.
On one hand we assume that $H$ is connected with 
equivariant momentum map  $\J_L$. 
On the other hand we assume that, although  the $G$-action does not admit 
an equivariant momentum map, there is a 
$\hat\g$-equivariant momentum map $\J_R:S\to\hat\g^*$ for the induced action 
of a fixed central extension $\hat\g$ of $\g$. 
Moreover we assume that, although $\hat\g$ might not be integrable to
a central extension of the (possibly non-connected) Lie group $G$, 
the coadjoint action of $\g$ on $\hat\g^*$ integrates to a $G$-action on $\hat\g^*$ that makes $\J_R$ a $G$-equivariant map.

If $G$ acts transitively on level sets of $\J_L$
and $H$ acts transitively on connected components of level sets of ${\J}_R$,
then the connected components of $S_\si=\J_R^{-1}(\si)/G_\si$
are symplectically diffeomorphic to coadjoint orbits of $H$. 
\end{proposition}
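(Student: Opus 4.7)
The plan is to closely follow the proof of Proposition \ref{id1}, substituting the integrated $G$-action on $\hat\g^*$ for the usual coadjoint action on $\g^*$ wherever equivariance of $\J_R$ is invoked, and using the reduced symplectic form on $S_\si$ constructed in the preceding paragraph on non-equivariant momentum maps.

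First, I would introduce the induced map
\begin{equation*}
\bar\J_L:S_\si\longrightarrow\h^*,\qquad \bar\J_L([x])=\J_L(x),
\end{equation*}
and verify the three properties needed to apply Lemma \ref{easy}: well-definedness on $G_\si$-orbits, $H$-equivariance, and injectivity. Well-definedness follows from $G$-invariance of $\J_L$, a standard consequence of the commuting mutually orthogonal symplectic actions combined with the $H$-equivariance of $\J_L$ (the identity $d(\langle\J_L,\xi\rangle\circ g)=d\langle\J_L,\xi\rangle$ is immediate from $G$-invariance of $\om$ and the commuting-action equality $g_*\xi_S=\xi_S\circ g$, and the additive obstruction on each connected component is eliminated by the orthogonality hypothesis). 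The $H$-equivariance of $\bar\J_L$ on the quotient is inherited directly. For injectivity, if $\bar\J_L([x_1])=\bar\J_L([x_2])$ with $x_1,x_2\in\J_R^{-1}(\si)$, the transitivity hypothesis on $\J_L$-level sets supplies $g\in G$ with $g\cdot x_1=x_2$, and the $G$-equivariance of $\J_R$ with respect to the integrated $G$-action on $\hat\g^*$ forces $g\cdot\si=\si$, so $g\in G_\si$ and $[x_1]=[x_2]$.

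Next, I would repeat the topological argument of Proposition \ref{id1} essentially verbatim to show that each connected component $\C$ of $S_\si$ is a single $H$-orbit. Fix $[x_0]\in\C$, let $\pi:\J_R^{-1}(\si)\to S_\si$ be the projection, and form the bi-orbit $B_0=H\cdot x_0\cdot G_\si\subset\J_R^{-1}(\si)$. Since $H$ is connected and acts transitively on connected components of $\J_R^{-1}(\si)$, each $H$-orbit is a full connected component, so $B_0$ (a union of $H$-orbits) is open in $\J_R^{-1}(\si)$. The complement $B_1=\pi^{-1}(\C)\setminus B_0$ is also $H$-saturated, hence open. Both sets are $G_\si$-saturated, so their images partition $\C$ into two disjoint open subsets; connectedness of $\C$ together with $[x_0]\in\pi(B_0)$ forces $\C=\pi(B_0)=H\cdot[x_0]$. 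Lemma \ref{easy}, applied to the restriction $\bar\J_L|_\C$ acting on the symplectic manifold $(\C,\om_\si|_\C)$, then identifies $\C$ symplectically with the coadjoint orbit of $H$ through $\J_L(x_0)$.

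The only genuinely new point compared with Proposition \ref{id1} is that the stabilizer $G_\si$ is interpreted within the possibly non-integrable central extension picture: it is a subgroup of $G$ itself rather than of a Lie group extension of $G$, and this is precisely what the standing hypothesis -- that the coadjoint $\g$-action on $\hat\g^*$ integrates to a $G$-action making $\J_R$ equivariant -- is designed to guarantee. I expect this bookkeeping (together with a clean reference to the construction of $\om_\si$ in the non-equivariant paragraph) to be the principal, though minor, obstacle; all other steps transfer essentially unchanged from Proposition \ref{id1}.
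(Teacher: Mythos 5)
Your proposal is correct and follows essentially the same route as the paper, which itself disposes of Proposition \ref{id2} with the single remark that ``the proof is similar to that of Proposition \ref{id1}'': you reproduce the injectivity and bi-orbit arguments of that proof and correctly isolate the only new ingredients, namely the interpretation of $G_\si$ as a stabilizer for the integrated $G$-action on $\hat\g^*$ and the use of the reduced form $\om_\si$ from the non-equivariant paragraph. Your explicit check of well-definedness of $\bar\J_L$ on $G_\si$-orbits is a detail the paper leaves implicit, but it does not change the argument.
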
 
\color{black}

The proof is similar to that of Proposition \ref{id1}.


\section{The ideal fluid dual pair}\label{s5}

A pair of momentum maps related to the dynamics of Euler equations of a perfect fluid was presented in \cite{Marsden-Weinstein}. The construction goes as follows.
Let $(S,\omega)$ be a symplectic manifold and $(M,\mu)$ be a compact manifold
endowed with volume form $\mu$. The manifold $C^\infty(M,S)$ of all smooth maps from $M$ to $S$ is naturally endowed with a symplectic form $\bar\om$ given by
\begin{equation}\label{crc}
\bar\om_f(u_f,v_f)=\int_M\om(u_f,v_f)\mu,
\end{equation}
where $u_f$ and $v_f$ are vector fields on $S$ along $f\in C^\infty(M,S)$. The left action of the group $\operatorname{Diff}_{\rm symp}(S)$ of symplectic diffeomorphisms and the right action of the group $\operatorname{Diff}_{\rm vol}(M)$ of volume preserving diffeomorphisms are two commuting symplectic actions. As explained \cite{Marsden-Weinstein}, to these actions is {formally} associated a pair of momentum maps
\begin{equation}\label{MW_dual_pair} 
\X_{\rm symp}(S )^*\stackrel{\J_L}{\longleftarrow} C^\infty(M,S)\stackrel{\J_R}{\longrightarrow}\X_{\rm vol}(M)^*.
\end{equation} 
The right momentum map $ \mathbf{J} _R(f)=-f^*\om $ represents Clebsch variables for the Euler equations seen as a Hamiltonian system on $\X_{\rm vol}(M)^*$. The left momentum map $ \mathbf{J} _L(f)=f_*\mu $ is a
constant of motion for the induced Hamiltonian system on $C^\infty(M, S)$. It also describes point vortex solutions of the two dimensional Euler equations on $S$ when $\operatorname{dim}(M)=0$ and $S$ is two dimensional.

A rigorous study of \eqref{MW_dual_pair} has been carried out in \cite{GBVi2011}. In particular, it was shown that, 
in order to verify the momentum map conditions, one needs to restrict the action to the subgroups $ \operatorname{Diff}_{\rm ex}(M)\subset \operatorname{Diff}_{\rm vol}(M) $ and $ \operatorname{Diff}_{\rm ham}(S )\subset \operatorname{Diff}_{\rm symp}(S) $ of exact volume preserving and Hamiltonian diffeomorphisms, 
respectively (for this purpose a calculus on manifolds of functions
has been developed in \cite{Vizman}). Then, in order to obtain the equivariance property of the momentum maps, it is necessary to consider the central extension $H=\Diff_{\quant}(P)_0$ of $ \operatorname{Diff}_{\rm ham}(S )$ (when $ \omega $ is prequantizable) and the Ismagilov central extension $G=\widehat{\operatorname{Diff}}_{\rm ex}(M)$ of $ \operatorname{Diff}_{\rm ex}(M)$ (see Section \ref{s1}). 
Finally, to 
show the dual pair property, one needs to restrict to the open subset $ \operatorname{Emb}(M, S)$ of all embeddings from $M$ into $S$ and assume $H^1(M)=0$ (in which case $ \operatorname{Diff}_{\rm ex}(M)$ coincides with the identity component $ \operatorname{Diff}_{\rm vol}(M)_0$). 
The resulting dual pair of Poisson maps is
\begin{equation}\label{MW_GBVi_dual_pair} 
\Den_c(S)=C^\infty(S)^\ast \stackrel{\J_L}{\longleftarrow} \operatorname{Emb}(M,S)\stackrel{\J_R}{\longrightarrow} \left( \Omega ^{k-2}(M)/ \mathbf{d} \Omega ^{k-3}(M) \right)  ^\ast = Z^2(M),
\end{equation} 
where $ \mathbf{J} _L(f)=f _\ast \mu $ and $ \mathbf{J} _R( f)=-f ^\ast \omega $, see Theorem 4.5 in \cite{GBVi2011}. Recall that both the definitions of the momentum maps and of the dual pair property only depend on the infinitesimal actions.

In fact, the actions verify the stronger property to be mutually completely orthogonal:
\[
\X_{\ex}(M)_{\Emb}^{\ \bar\om}=\X_{\ham}(S)_{\Emb},\quad
\X_{\ham}(S)_{\Emb}^{\ \bar\om}=\X_{\ex}(M)_{\Emb}.
\] 
In other words, when $H ^1 (M)=0$, the group $\operatorname{Diff}_{\rm ex}(M)$
(hence $G= \widehat{\operatorname{Diff}}_{\rm ex}(M)$ too) acts \textit{infinitesimally transitively} on the level sets of $ \mathbf{J} _L$ and  $\operatorname{Diff}_{\rm ham}(S)$ (hence $H=\Diff_{\quant}(P)_0$ too) acts \textit{infinitesimally transitively} on the level sets of $ \mathbf{J} _R$. As these groups are connected and all the constructions in the proof can be performed smoothly depending on a parameter, these actions are \textit{transitive} on the \textit{connected components} of the level sets
of momentum maps.
It is proven in Lemma 3 \cite{GBVi2012} that, without the condition $H^1(M)=0$,
$ \operatorname{Diff}_{\rm vol}(M)$ acts transitively on the level sets of $ \mathbf{J} _L$ (not just on its connected components). We recall below the proofs of the transitivity results we shall use later.

\begin{lemma}[\cite{GBVi2012}]\label{lem1} The group $\Diff_{\rm vol}(M)$ acts transitively on the level sets of the momentum map $\J_L:\Emb(M,S)\to
\Den_c(S)$.
\end{lemma}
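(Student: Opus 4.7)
The plan is to show directly that if $f_0,f_1\in\Emb(M,S)$ satisfy $\J_L(f_0)=\J_L(f_1)$, i.e.\ $(f_0)_*\mu=(f_1)_*\mu$ as elements of $\Den_c(S)=C^\infty(S)^*$, then there is a $\ph\in\Diff_{\vol}(M)$ with $f_1=f_0\circ\ph$. The argument has three steps: identify the images, build the candidate diffeomorphism, and verify that it preserves $\mu$.

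First, I would show that $f_0(M)=f_1(M)$. For any $f\in\Emb(M,S)$ the distributional density $f_*\mu$ on $S$ acts on $h\in C^\infty(S)$ by $\langle f_*\mu,h\rangle=\int_M(h\circ f)\mu$. Since $\mu$ is nowhere vanishing and $f$ is an embedding, the support of $f_*\mu$ is precisely the closed submanifold $f(M)\subset S$: if $s\notin f(M)$, choose $h$ supported in $S\setminus f(M)$ with $h(s)\neq 0$ to see $s\notin\on{supp}(f_*\mu)$, and if $s=f(x)\in f(M)$, local coordinates around $x$ and a bump function composed with $f^{-1}$ give a test function against which $\langle f_*\mu,h\rangle\neq 0$. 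Hence $(f_0)_*\mu=(f_1)_*\mu$ forces the common support $N:=f_0(M)=f_1(M)$.

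Next, both $f_0,f_1:M\to N$ are diffeomorphisms onto the submanifold $N$, so $\ph:=f_0^{-1}\circ f_1\in\Diff(M)$ and by construction $f_1=f_0\circ\ph$. Finally, pushing $\mu$ forward under this factorisation gives
\[
(f_0)_*(\ph_*\mu)=(f_0\circ\ph)_*\mu=(f_1)_*\mu=(f_0)_*\mu,
\]
and since $f_0:M\to N$ is a diffeomorphism the pushforward $(f_0)_*$ is injective on densities of $M$, which yields $\ph_*\mu=\mu$, that is $\ph\in\Diff_{\vol}(M)$. This shows $f_1=f_0\cdot\ph^{-1}$ lies in the $\Diff_{\vol}(M)$-orbit of $f_0$, establishing transitivity on $\J_L^{-1}(\J_L(f_0))$.

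The only genuinely subtle point is the support identification in the first step, because when $\dim M<\dim S$ the pushforward $f_*\mu$ is merely a distributional density and not a smooth one; one must argue with test functions rather than pointwise values. Beyond that, the proof is purely formal, relying just on the injectivity of embeddings and the naturality of the pushforward operation.
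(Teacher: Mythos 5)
Your proposal is correct and follows essentially the same route as the paper: identify the images via the support of $f_*\mu$, factor $f_1=f_0\circ\ph$ through the common image, and deduce $\ph^*\mu=\mu$ from the defining identity $\int_M(h\circ f_0)\mu=\int_M(h\circ f_1)\mu$ (the paper phrases the last step as a change of variables, you phrase it as injectivity of $(f_0)_*$ on densities, which amounts to the same use of the fact that $h\circ f_0$ exhausts $C^\infty(M)$). The only cosmetic slip is writing $f_1=f_0\cdot\ph^{-1}$ in the last line where you mean $f_1=f_0\circ\ph$; this does not affect the conclusion.
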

\begin{proof}
Let $f_1,f_2\in\Emb(M,S)$  such that $\J_L(f_1)=\J_L(f_2)$, hence
$\int_M(h\o f_1)\mu=\int_M(h\o f_2)\mu$ for all $h\in C^\oo(S)$.
A first consequence is that the two embeddings have the same image in $S$,
so there exists $\ps\in\Diff(M)$ such that $f_2=f_2\o\ps$.
We rewrite the identity above as $\int_M(h\o f_2)\ps^*\mu=\int_M(h\o f_2)\mu$
for all $h\o f_2\in C^\oo(M)$, and we deduce $\ps^*\mu=\mu$.
Thus we found $\ps\in\Diff_{\vol}(M)$ such that $f_2=f_1\o\ps$.
\end{proof}

\begin{lemma}[\cite{GBVi2011}]\label{lem2}
If $H^1(M)=0$, then the group $\Diff_{\ham}(S)$ acts transitively {on each of} the connected components of level sets of the momentum map $\J_R:\Emb(M,S)\to Z^2(M)$.
\end{lemma}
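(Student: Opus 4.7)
The plan is to prove the statement by an isotopy/Moser-type argument: given two embeddings $f_0,f_1$ lying in the same connected component of the level set $\J_R^{-1}(\si)=\{f\in\Emb(M,S):f^\ast\om=-\si\}$, I join them by a smooth path $f_t$ entirely contained in that component, and I seek a time-dependent Hamiltonian vector field $X_t$ on $S$ whose flow $\ph_t\in\Diff_{\ham}(S)$ realises $\ph_t\o f_0=f_t$; setting $t=1$ then produces the required Hamiltonian diffeomorphism.

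The first step is to identify the relevant obstruction. Let $u_t=\pa_t f_t\in T_{f_t}\Emb(M,S)$, a vector field on $S$ along $f_t$, and define the $1$-form on $M$
\[
\al_t:=f_t^\ast(\mathbf{i}_{\tilde X_t}\om)\in\Om^1(M),\qquad \al_t(w)=\om(u_t(m),Tf_t(w)),
\]
which depends only on $u_t$ and $f_t$ (and not on any chosen extension $\tilde X_t$ of $u_t$). Differentiating the identity $f_t^\ast\om=-\si$ in $t$ and using Cartan's formula gives $\mathbf{d}\al_t=\tfrac{d}{dt}f_t^\ast\om=0$, so $\al_t$ is closed. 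Because $H^1(M)=0$, we can write $\al_t=\mathbf{d} h_t$ for a family $h_t\in C^\oo(M)$ depending smoothly on $t$ (the dependence is smooth because one may construct $h_t$ via a fixed chain-homotopy operator arising from a Riemannian metric on $M$).

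The second and main step is to promote this pointwise/tangential information into a global Hamiltonian $H_t\in C^\oo(S)$ whose Hamiltonian vector field $X_{H_t}$ satisfies $X_{H_t}\o f_t=u_t$. The required condition on $H_t$ is that, along $N_t=f_t(M)$, the covector $\mathbf{d} H_t(f_t(m))\in T^\ast_{f_t(m)}S$ equals $\om(\cdot,u_t(m))$. This prescribes the $1$-jet of $H_t$ along $N_t$; consistency along $TN_t$ is exactly the identity $\mathbf{d}(H_t\o f_t)=-\al_t=-\mathbf{d} h_t$, which is satisfied by choosing $H_t\o f_t=-h_t$ (any constant of integration may be fixed, e.g.\ by the base-point). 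The prescribed $1$-jet along the embedded submanifold $N_t$ can then be extended to a smooth function $H_t$ on all of $S$ using a tubular neighbourhood of $N_t$ and a partition of unity; carrying this out parameter-smoothly (e.g.\ by choosing a smooth family of tubular neighbourhoods, which is possible because $t\mapsto f_t$ is a smooth curve of embeddings) yields a smooth family $H_t\in C^\oo(S)$.

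With $X_t:=X_{H_t}$ Hamiltonian and $X_t\o f_t=u_t=\pa_t f_t$, the time-dependent flow $\ph_t$ of $X_t$ is a Hamiltonian isotopy starting at the identity with $\ph_t\o f_0=f_t$, so $\ph_1\in\Diff_{\ham}(S)$ satisfies $\ph_1\o f_0=f_1$. The principal obstacle is really the smooth-in-$t$ extension step above (step two), everything else being either formal or a consequence of $H^1(M)=0$; as noted just before the statement, this is precisely the content of the infinitesimal transitivity proof in \cite{GBVi2011}, and because the construction depends smoothly on the parameter $t$ one upgrades infinitesimal transitivity along curves to genuine transitivity on connected components of level sets.
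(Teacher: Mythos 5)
Your proof is correct and follows essentially the same route as the paper's: closedness of $\al_t$ plus $H^1(M)=0$ gives exactness, the resulting function is extended to a global Hamiltonian by prescribing its $1$-jet along the embedded image (the paper delegates exactly this extension step to Lemma 4.2 of \cite{GBVi2011}, whereas you carry it out by hand with a tubular neighbourhood), and transitivity on connected components then follows from smooth dependence on the parameter $t$. The only point worth adding is that for noncompact $S$ you should multiply $H_t$ by a cutoff equal to $1$ near the compact set $\bigcup_{t\in[0,1]}f_t(M)$ so that the time-dependent Hamiltonian flow is guaranteed to exist up to time $1$.
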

\begin{proof}
First we show that the action of the Lie algebra $\X_{\ham}(S)$ on level sets of $\J_R$, given by $(X_h)_{\Emb}(f)=X_h\o f$,
is infinitesimally transitive. This means to show that, 
for any $v\o f\in T_f\Emb(M,S)$, where $v\in\X(S)$ such that
$T_f\J_R(v\o f)=-f^*\pounds _{v}\om=0$,
there is a Hamiltonian vector field $X_h$ on $S$ with $v\o f=X_h\o f$.
Indeed, the 1-form $f^*\mathbf{i} _{v}\om$ on $M$ is closed, hence exact, since $H^1(M)=0$. There exists  $h_1\in C^\oo(S)$ such that $f^*\mathbf{i} _{v}\om
=\dd(h_1\o f)$, since $f$ is an embedding.
Now the 1-form $\be_f$ on $S$ along $M$ defined by
\[
\be_f:=\mathbf{i} _{v\o f}\om-\mathbf{d} h_1\o f \in\Ga(f^*T^*S)
\] 
vanishes on vectors tangent to $f(M)\subset S$. 
By Lemma 4.2 in \cite{GBVi2011}, that is detached from the proof of Proposition 3 in \cite{Haller-Vizman}, we find $h_2\in C^\oo(S)$ such that $\be_f=\mathbf{d} h_2\o f$.
It follows that $\mathbf{d} (h_1+h_2)\o f=\mathbf{i} _{v\o f}\om$,
so $v\o f=X_{h_1+h_2}\o f$.

As in the proof of Proposition 2 in \cite{Haller-Vizman}
we remark that all the constructions above can be performed smoothly depending on a parameter,
so $\Diff_{\ham}(S)$ acts transitively on connected components of the level sets of the momentum map $\J_R$.
\end{proof}
\medskip

If $S$ is compact, then the central extension of $ \operatorname{Diff}_{\rm ham}(S)$ is not needed and we have the dual pair 
\begin{equation}\label{MW_GBVi_dual_pair_compact} 
\mathfrak{X}  _{\rm ham}(S)^\ast \stackrel{\J^0 _L}{\longleftarrow} \operatorname{Emb}(M,S)\stackrel{\J_R}{\longrightarrow} Z^2(M),
\end{equation}
where $\left\langle\mathbf{J} ^0_L(f), X_h \right\rangle =\int_M( h_0 \circ f) \mu $, with $h_0$ the unique Hamiltonian function of $X_h$ with zero integral on $S$.


\section{Coadjoint orbits as nonlinear Grassmannians}\label{s6}

The goal of this section is to carry out symplectic reduction for the right momentum map  of the ideal fluid dual pairs \eqref{MW_GBVi_dual_pair}, \resp \eqref{MW_GBVi_dual_pair_compact}, in order to describe coadjoint orbits of the group of Hamiltonian diffeomorphisms in terms of nonlinear Grassmannians. The case of zero momentum allows us to obtain Grassmannians of isotropic volume submanifolds as coadjoint orbits, slightly generalizing the results in \cite{W90} and \cite{L}. At the other extreme, the case of a nondegenerate momentum recovers the identification of connected components of the nonlinear symplectic Grassmannian with coadjoint orbits, thereby recovering the result of \cite{Haller-Vizman}.
Some comments about the intermediate cases will be given. Our approach naturally yields concrete expressions for the tangent spaces to the orbits and hence, explicit formulas for the orbit symplectic form.

\paragraph{Coadjoint orbits of the Hamiltonian group.}
Every compact $2m$-dimensional symplectic manifold $(S,\om)$ is a coadjoint orbit of its 
group of Hamiltonian diffeomorphisms $\Diff_{\ham}(S)$. The identification is made via the momentum map
\[
\J:S\hookrightarrow\X_{\ham}(S)^*,\quad\langle\J(s),X_h\rangle=h_0 (s),
\]
where $h_0 \in C^\oo(S)$ is the unique Hamiltonian function for $X_h$ with zero integral $\int_S h_0 \om^m=0$.

Another class of infinite dimensional coadjoint orbits of $\Diff_{\ham}(S)$ has been  described in \cite{Haller-Vizman}. \textcolor{black}{ Let $M$ be a compact $2n$-dimensional manifold. The connected components of the nonlinear symplectic
Grassmannian $\Gr_{\symp}^M(S)$ of symplectic submanifolds of $S$ of type $M$ are coadjoint orbits of $\Diff_{\ham}(S)$}. The identification is done via the momentum map associated to the natural Hamiltonian action of $ \operatorname{Diff}_{\rm ham}(S)$ on an arbitrary connected component $\Gr_{\symp}^M(S)_0$, endowed with the symplectic form naturally induced by  $ \frac{1}{n+1}\omega ^{n+1} \in \Omega ^{2(n+1)}(S)$. This momentum map is given by
\[
\J:\Gr_{\symp}^M(S)_0\hookrightarrow\X_{\ham}(S)^*,\quad\langle\J(N),X_h\rangle=\int_Nh_0 \om^n.
\]
If $S$ is not compact and $ \omega $ is prequantizable, then $\Gr_{\symp}^M(S)_0$ is a coadjoint orbit of the quantomorphism group
via the identification
\begin{equation}\label{stefan}
\J:\Gr_{\symp}^M(S)_0\hookrightarrow C^\oo(S)^*,\quad\langle\J(N),h\rangle=\int_Nh \om^n.
\end{equation}

In \cite{W90}, it was heuristically shown that the leaves of a certain foliation (the isodrastic foliation) of the space of weighted Lagrangian submanifold of $S$ can be identified with coadjoint orbits of the group of Hamiltonian diffeomorphisms of $S$. This fact was rigorously shown in \cite{L} and generalized to the leaves of a foliation of the space of weighted isotropic submanifolds of $S$.
Here a weight is a volume form of total measure 1.

\medskip

We shall obtain below those coadjoint orbits containing isotropic submanifolds 
\textcolor{black}{$N\subset S$ with $H^1(N)=0$}, as well as new ones, in a systematic way by using the general results in Propositions \ref{id1} and \ref{id2} applied to the ideal fluid dual pair as formulated in \cite{GBVi2011}. In the case of coadjoint orbits studied in \cite{L}, our approach yields much concrete expressions for the tangent spaces to the coadjoint orbits and hence, explicit formulas for the orbit symplectic form.
Our setting also allows us to build a prequantum bundle for the isotropic case (in \S\ref{s7}), which recovers the Berry bundle of \cite{W90} in the Lagrangian case.

More precisely, we will show below that, {if $(S, \omega )$ is prequantizable}, the connected components of the \textit{nonlinear Grassmannian of isotropic volume submanifolds} of $S$ of type $(M,\mu)$ are coadjoint orbits of $\Diff_{\quant}(P)_0$. If $S$ is compact, then these are coadjoint orbits of $ \operatorname{Diff}_{\rm ham}(S)$. If the symplectic manifold $S$ is exact,
the connected components of the nonlinear Grassmannian 
of \textit{exact} isotropic volume submanifolds of $S$ of type $(M,\mu)$
are coadjoint orbits of $\Diff_{\quant}(P)_0$.

\paragraph{Isotropic embeddings.} 
Recall that the dual pair \eqref{MW_GBVi_dual_pair}
consists of momentum maps for the commuting mutually orthogonal actions 
of $G=\widehat{\operatorname{Diff}}_{\rm ex}(M)$ and $H=\Diff_{\quant}(P)_0$ on $\Emb(M,S)$,
where $(M,\mu)$ is a compact $k$-dimensional manifold
endowed with volume form $\mu$ with $H^1(M)=0$
and $(S,\om)$ a prequantizable symplectic $2m$-dimensional manifold.

The zero level set of the momentum map $\J_R: \operatorname{Emb}(M, S) \rightarrow Z ^2 (M)$, $ \mathbf{J} _R  (f)=f^*\om$ is the manifold of \textit{isotropic embeddings}
\[
\J_R^{-1}(0)=\{f\in\Emb(M,S): f^*\om=0\}=:\Emb_{\iso}(M,S).
\]
The dual pair property of \eqref{MW_GBVi_dual_pair} 
ensures that the tangent space 
\begin{equation}\label{isos}
T_f\Emb_{\iso}(M,S)=\ker T_f\J_R=(\ker T_f\J_L)^{\bar\om}=(\X_{\ham}(S))_{\Emb(M,S)}(f)
\end{equation}
is the space of infinitesimal generators at the isotropic embedding $f$ for the left action of Hamiltonian vector fields.
 
\paragraph{Isotropic nonlinear Grassmannian.}
Recall that the momentum map $\J_R$ is associated to the action
of the central extension $\widehat{ \operatorname{Diff}}_{\rm ex}(M)$ of the group of exact volume preserving diffeomorphisms
$\Diff_{\ex}(M)=\Diff_{\rm vol}(M)_0$. Unfortunately, symplectic reduction at zero relative to this group, $ \mathbf{J} _R ^{-1} (0)/ \widehat{ \operatorname{Diff}}_{\rm ex}(M)= \J_R ^{-1} (0)/\operatorname{Diff}_{\rm ex}(M)$,
leads to a covering space of a nonlinear Grassmannian of volume submanifolds.
Moreover, the hypotheses of Proposition \ref{till} are not satisfied: ${\operatorname{Diff}}_{\rm ex}(M)$  does not act transitively on the level sets of $ \mathbf{J} _L $,
hence  this symplectic reduced space might not be isomorphic to a coadjoint orbit.

We shall instead consider the quotient space $\J_R ^{-1} (0)/\operatorname{Diff}_{\rm vol}(M)$. Remarkably, using Proposition \ref{id2}, we will show that this is still a symplectic reduced space, despite the fact that the central extension
of the non-connected group $\Diff_{\vol}(M)$ is not known. 
 
The symplectic reduced space ${\mathbf{J}} _R ^{-1} (0)/ \operatorname{Diff}_{\rm vol}(M)$ is given by the image of the manifold ${\mathbf{J}} _R^{-1}(0)= \operatorname{Emb}_{\iso}(M,S)$ by the map $\pi^\mu$ from \eqref{pim}. It is therefore the manifold
\[
\Gr_{\iso}^{M,\mu}(S)=\left\{(N,\nu):N\in\Gr_{\iso}^M(S),\;\nu\in\Vol(N),\;\int_N\nu=\int_M\mu\right\} 
\] 
of \textit{isotropic volume submanifolds of $S$ of type $(M,\mu)$}. In the appendix  is shown that $\Gr_{\iso}^{M,\mu}(S)$ is a smooth manifold, the base of the principal bundle 
$\pi^\mu:\Emb_{\iso}(M,S) \rightarrow \Gr_{\iso}^{M,\mu}(S)$ with structure group $\Diff_{\rm vol}(M)$ (see also \cite{L}).

In order to describe the reduced symplectic form on $\Gr_{\iso}^{M,\mu}(S)$, we need a concrete realization for the tangent space $T_{(N,\nu)}\Gr_{\iso}^{M,\mu}(S)$. We first consider the tangent space to the manifold $\Gr_{\iso}^M(S)=\operatorname{Emb}_{\iso}(M,S)/ \operatorname{Diff}_{\rm vol}(M)$ of \textit{isotropic submanifolds of type $M$}.

\paragraph{Tangent space to $\Gr_{\iso}^M(S)$.}
The Lie algebra $\X_{\ham}(S)$ acts transitively on  $\Gr_{\iso}^M(S)$, so
the tangent space at $N$, as a subspace of $T_N\Gr^M(S)=\Ga(TN^\perp)$, can be written as
\begin{equation}\label{tgt_space_Gr_iso_S}
T_N\Gr_{\iso}^M(S)\stackrel{\eqref{isos}}{=}\{T_f\pi(v\o f):v\in\X_{\ham}(S)\}\stackrel{\eqref{bigp}}{=}
\{\textcolor{black}{[X_h|_N]}\in\Ga(TN^\perp):h\in C^\oo(S)\},
\end{equation} 
where $TN^\perp= (TS|_{N})/TN$ and $f$ is any isotropic embedding with $f(M)=N$.

In the special case $\dim M=\frac12\dim S=m$ we get the nonlinear Grassmannian $\Gr_{\Lag}^M(S)$ of \textit{Lagrangian submanifolds of $S$ of type $M$}.
Because for any Lagrangian submanifold $L$ of $ S$ the symplectic orthogonal $TL^\om$ coincides with $TL$,
the symplectic form $\om$ defines an isomorphism between
$TL^\perp=(TS|_L)/TL$ and $T^*L$, so
$T_L\Gr^M(S)=\Ga(TL^\perp)\cong\Om^1(L)$ and
\begin{equation}\label{lagri}
T_L\Gr^M_{\Lag}(S)=\{\textcolor{black}{[X_h|_L]}\in\Ga(TL^\perp):h\in C^\oo(S)\}
\cong\dd C^\oo(L).
\end{equation}
All the Lagrangian submanifolds $L\in\Gr^M_{\Lag}(S)$, being of type $M$, 
satisfy $H^1(L)=0$, hence this description  of the tangent space agrees with  the general result 
$T_L\Gr^M_{\Lag}(S)=Z^1(L)$ \cite{W71}.

A description similar to \eqref{lagri} 
\textcolor{black}{in the isotropic case}
is obtained using  \eqref{tgt_space_Gr_iso_S}: 
\begin{equation}\label{ting}
T_N\Gr_{\iso}^M(S)=\{[v_N]\in\Ga(TN^{\perp}):(\mathbf{i} _{v_N}\om)|_{TN}=\mathbf{d} h_0,h_0\in C^\oo(N)\}.
\end{equation}
To show this, we assume that $v_N\in\Ga(TS|_N)$ is such that $(\mathbf{i} _{v_N}\om)|_{TN}=\mathbf{d} h_0$ for $h_0\in C^\oo(N)$. Let $h_1\in C^\oo(S)$ be an extension of $h_0$ to $S$. The section $\al:=\mathbf{i} _{v_N}\om-(\mathbf{d} h_1)|_N\in\Ga(T^*S|_N)$ verifies $\al|_{TN}=0$, therefore, by Lemma 4.2 in \cite{GBVi2011}, there exists $h_2\in C^\oo(S)$
such that  $\al=(\mathbf{d} h_2)|_N$. 
Then $ \mathbf{i} _{v_N}\om=(dh)|_N$ for  $h=h_1+h_2$, so $v_N=
X_h|_N$
and $[v_N]=\textcolor{black}{[X_h|_N]}$. 
{The other inclusion is readily verified.}

\paragraph{Reduced symplectic form.}
 With a choice of a  Riemannian metric $g$ on $S$,
the vector bundle $TN^\perp$ is isomorphic to the orthogonal complement 
$TN^{\perp_g}$ of $TN$ in $TS|_N$ relative to the metric $g$.
As shown in \cite{GBVi2013}, the differential of the projection $\pi^\mu$ in \eqref{pim} can be written, for all $v\in\X_{\ham}(S)$, as 
\begin{equation}\label{tunu}
T_f\pi^\mu:  v\o f \in T_f\Emb_{\iso}(M,S)\;\longmapsto\; (v|_N^\per,\pounds _{v|_N^{\parallel_g}}\nu)\in
T_{(N,\nu)}\Gr_{\iso}^{M,\mu}(S).
\end{equation}
\color{black}
The tangent space to $\Gr_{\iso}^{M,\mu}(S)$ can be identified with
\begin{align*}
T_{(N,\nu)}\Gr_{\iso}^{M,\mu}(S)
&=T_N\Gr_{\iso}^M(S)\x\dd\Om^{k-1}(N).
\end{align*}
To show this we consider the $\Diff(M)$-action on
$\Emb_{\iso}(M,S)$ by composition on the right. The infinitesimal generator of $w\in\X(M)$
at the isotropic embedding $f$ is given by $Tf\o w\in T_f\Emb_{\iso}(M,S)$. It is mapped by \eqref{tunu}
to the pair $\big(0,(_N|f)_*(\pounds_w\mu)\big)$, where $_N|f$ is the diffeomorphism $_N|f: M \rightarrow N$. Tangent vectors of this form clearly span $\{0\}\x\dd\Om^{k-1}(N)$.
\color{black}

The following proposition gives the concrete expression of the reduced symplectic form for the description of the tangent space $T_{(N,\nu)}\Gr_{\iso}^{M,\mu}(S)$ given above. 

\begin{theorem}\label{formula_KKS} 
The reduced symplectic form 
on  $\Gr_{\iso}^{M,\mu}(S)$ 
is
\begin{align}\label{trei}
(\om_0)_{(N,\nu)}\big((u_N,\mathbf{d} \gamma),(v_N,\mathbf{d} \la)\big)&=\int_N\om(u_N,v_N)\nu
+\int_N(\mathbf{i} _{u_N}\om)|_{TN}\wedge\la-\int_N(\mathbf{i} _{v_N}\om)|_{TN}\wedge\gamma\nonumber\\
&=\int_N\big(  \om(u_N,v_N)\nu + h_{v_N} \mathbf{d} \gamma - h_{u_N} \mathbf{d} \lambda \big),
\end{align} 
for all $u_N,v_N\in T_N\Gr_{\iso}^M(S)$, $\gamma,\la\in\Om^{k-1}(N)$, where $h_{ u_N}$ and $h_{ v_N}$ are  functions on $N$ associated 
to $u_N$ and $v_N$ as in \eqref{ting}. 
\end{theorem}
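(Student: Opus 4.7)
The plan is to verify the formula by computing $\bar\omega$ on a Hamiltonian lift of the given tangent vectors and matching the result with the stated expression. Since the reduced symplectic form $\omega_0$ is uniquely characterized by $(\pi^\mu)^*\omega_0=\bar\omega|_{\Emb_{\iso}(M,S)}$ (which follows from the symplectic reduction established via Proposition \ref{id2}, since $\operatorname{Diff}_{\vol}(M)$ acts transitively on the fibers of $\pi^\mu$ by Lemma \ref{lem1}), it suffices to evaluate the right-hand side of \eqref{trei} on a single lift. Fix $f\in\Emb_{\iso}(M,S)$ with $f(M)=N$ and $f_*\mu=\nu$. By \eqref{isos}, $\X_{\ham}(S)$ acts infinitesimally transitively on $\Emb_{\iso}(M,S)$, so we can pick $h_1,h_2\in C^\oo(S)$ such that $X_{h_i}\circ f\in T_f\Emb_{\iso}(M,S)$ maps under \eqref{tunu} to $(u_N,\mathbf{d}\gamma)$ and $(v_N,\mathbf{d}\lambda)$ respectively; equivalently, $[X_{h_1}|_N]=u_N$, $[X_{h_2}|_N]=v_N$ in $TN^\perp$, and $\pounds_{X_{h_1}|_N^{\parallel_g}}\nu=\mathbf{d}\gamma$, $\pounds_{X_{h_2}|_N^{\parallel_g}}\nu=\mathbf{d}\lambda$.

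Using $f_*\mu=\nu$ we rewrite
\begin{equation*}
\bar\omega_f(X_{h_1}\circ f,X_{h_2}\circ f)=\int_N\omega(X_{h_1}|_N,X_{h_2}|_N)\,\nu,
\end{equation*}
and then decompose $X_{h_i}|_N=X_{h_i}|_N^{\perp_g}+X_{h_i}|_N^{\parallel_g}$, expanding $\omega(X_{h_1}|_N,X_{h_2}|_N)$ into four terms. The tangential-tangential contribution vanishes because $N$ is isotropic, producing the term $\omega(u_N,v_N)$ plus two cross terms. The key structural observation is that since $\mathbf{i}_{X_{h_i}}\omega=\mathbf{d} h_i$ on $S$, restriction to $TN$ combined with the isotropy identity $\mathbf{i}_w\omega|_{TN}=0$ for $w\in TN$ yields $(\mathbf{i}_{X_{h_i}|_N^{\perp_g}}\omega)|_{TN}=\mathbf{d}(h_i|_N)$, so by \eqref{ting} one may take $h_{u_N}=h_1|_N$ and $h_{v_N}=h_2|_N$ up to additive constants.

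Next I would evaluate the cross terms by
\begin{equation*}
\int_N\omega(X_{h_1}|_N^{\perp_g},X_{h_2}|_N^{\parallel_g})\,\nu=\int_N\mathbf{d} h_{u_N}(X_{h_2}|_N^{\parallel_g})\,\nu=\int_N(\pounds_{X_{h_2}|_N^{\parallel_g}}h_{u_N})\,\nu,
\end{equation*}
and integrate by parts using $\partial N=\varnothing$ and $\pounds_X(h\nu)=(Xh)\nu+h\pounds_X\nu$ to obtain $-\int_N h_{u_N}\,\mathbf{d}\lambda$; the symmetric cross term contributes $+\int_N h_{v_N}\,\mathbf{d}\gamma$. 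Summing gives the second line of \eqref{trei}. The first line then follows by Stokes' theorem, $\int_N h\,\mathbf{d}\gamma=-\int_N\mathbf{d} h\wedge\gamma$, together with $\mathbf{d} h_{u_N}=(\mathbf{i}_{u_N}\omega)|_{TN}$.

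The main structural obstacle is that $\omega(u_N,v_N)$ is not intrinsically well-defined on $TN^\perp\times TN^\perp$ (representatives in $\Gamma(TS|_N)$ differing by a tangent vector field $w\in\Gamma(TN)$ produce a genuine discrepancy $-\mathbf{d} h_{v_N}(w)$ because $N$ is only isotropic, not symplectic); the formula is implicitly written using the $g$-orthogonal representatives singled out by \eqref{tunu}, and the apparent metric dependence is absorbed by the compensating change in $\mathbf{d}\gamma,\mathbf{d}\lambda$. Beyond verifying this consistency, the remaining work is routine bookkeeping of signs in the integration by parts and of the isotropy simplifications.
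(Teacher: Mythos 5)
Your proposal is correct and follows essentially the same route as the paper: pull the computation back to an isotropic embedding $f$ via Hamiltonian lifts, use $f_*\mu=\nu$ to write $\bar\om_f$ as $\int_N\om(u|_N,v|_N)\nu$, split into $\perp_g$ and $\parallel_g$ parts, and kill the tangential--tangential term by isotropy. The only (cosmetic) difference is that you convert the cross terms by integration by parts with $\pounds_{v^{\parallel_g}}$, whereas the paper uses the pointwise identity $\be(w)\nu=\be\wedge\mathbf{i}_w\nu$ to land directly on the wedge form with $\la=\mathbf{i}_{v|_N^{\parallel_g}}\nu$; your closing remark on the metric-dependence of the representatives being compensated by the $\mathbf{d}\ga,\mathbf{d}\la$ terms is a correct observation that the paper handles implicitly through its preliminary remark on independence of the choice of potentials.
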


\begin{proof}
First we remark the independence of this expression on the choice of the potential forms $\gamma $ and $\la$,
because $(\mathbf{i} _{u_N}\om)|_{TN}$ and $(\mathbf{i} _{v_N}\om)|_{TN}$ are exact 1-forms by \eqref{ting}.

Now we check that $\om_0$ defined with \eqref{trei} is indeed the reduced symplectic form.
Let $f\in\Emb_{\iso}(M,S)$ such that $f(M)=N$ and $f_*\mu=\nu$, 
and let $u,v\in\X_{\ham}(S)$ such that $v_N=v|_N^\per$ and $\la=\mathbf{i} _{v|_N^{\parallel_g}}\nu$ \resp $u_N=u|_N^\per$ and $\ga=\mathbf{i} _{u|_N^{\parallel _g }}\nu$. 
We have 
\begin{align*}
(\om_0)_{(N,\nu)}&\big((u_N,\mathbf{d} \gamma),(v_N,\mathbf{d} \la)\big)=(\om_0)_{\pi^\mu(f)}\big(T_f\pi^\mu(u\o f),T_f\pi^\mu(v\o f)\big)\\
&\stackrel{\eqref{omega_0}}{=}\langle\J_L(f),\om(u,v)\rangle
{=}\int_Mf^*\om(u,v)\mu=\int_N\om(u|_N,v|_N)\nu\\
&\stackrel{\ \ \ \ }{=}\int_N\om(u|_N^\per,v|_N^\per)\nu
+\int_N(\mathbf{i} _{u|_N^\per}\om)|_{TN}\wedge \mathbf{i} _{v|_N^{\parallel_g}}\nu-\int_N(\mathbf{i} _{v|_N^\per}\om)|_{TN}\wedge \mathbf{i} _{u|_N^{\parallel_g}}\nu\\
&\stackrel{\eqref{tunu}}{=}\int_N\om(u_N,v_N)\nu
+\int_N(\mathbf{i} _{u_N}\om)|_{TN}\wedge\la-\int_N(\mathbf{i} _{v_N}\om)|_{TN}\wedge\gamma\nonumber\\
&\stackrel{\ \ \ \ }{=}\int_N\big(  \om(u_N,v_N)\nu + h_{v_N} \mathbf{d} \gamma - h_{u_N} \mathbf{d} \lambda \big),
\end{align*}
using at step four the identity $\om(u|_N^{\parallel_g},v|_N^{\parallel_g})=0$, 
which follows from the fact that $N$ is an isotropic submanifold of $(S,\om)$.
\end{proof}

\paragraph{The Lagrangian case.} 
The tangent space to $\Gr_{\Lag}^{M,\mu}(S)$,
the nonlinear Grassmannian of \textit{Lagrangian volume submanifolds $(L,\nu)$
of type $(M,\mu)$}, becomes:
\begin{equation}\label{didi}
T_{(L,\nu)}\Gr_{\Lag}^{M,\mu}(S)
=T_L\Gr_{\Lag}^M(S)\x\dd\Om^{m-1}(L)\stackrel{\eqref{lagri}}{=}\dd C^\oo(L)\x\dd\Om^{m-1}(L).
\end{equation}
Here $\dd\Om^{m-1}(L)$ can be identified with the (regular) dual of $\dd C^\oo(L)$ through the pairing
\begin{equation}\label{see}
(\dd h,\dd\la)=-\int_L h\dd\la=\int_L \dd h \wedge \la.
\end{equation}
It is always possible to choose a Riemannian metric $g$ on $S$ that is compatible with $\om$, see, e.g., Proposition 4.1 in \cite{MDSa98}.
This means there exists an almost complex structure $J$ such that
$\om(v,Jw)=g(v,w)$ for all $v,w\in T_sS$.
Then the reduced symplectic form \eqref{trei}
on $\Gr_{\Lag}^{M,\mu}(S)$ 
with tangent space \eqref{didi} becomes
\begin{align}\label{patru}
(\om_0)_{(L,\nu)}\big((\dd h_1,\mathbf{d} \la_1),(\dd h_2,\mathbf{d} \la_2)\big)
&=\int_L\om(u_{1L},u_{2L})\nu+\int_L\dd h_1\wedge\la_2-\int_L\dd h_2\wedge\la_1\nonumber\\
&=(\dd h_1,\dd\la_2)-(\dd h_2,\dd\la_1),
\end{align} 
for all $(\dd h_1,\mathbf{d} \la_1),(\dd h_2,\mathbf{d} \la_2)\in \dd C^\oo(L)\x\dd\Om^{m-1}(L)$.
Here $u_{1L},u_{2L}\in \Ga(TL^\per)$ are uniquely determined by 
$(\ii_{u_L}\om)|_{TL}=\dd h$.
The first term 
vanishes because, for $s\in L$, the subspace $(T_sL)^\per=J(T_sL)\subset T_sS$ is Lagrangian 
whenever $T_sL\subset T_sS$ is Lagrangian.

\color{black}
The space $\dd\Om^{m-1}(L)$ can be identified with the regular dual of $\dd C^\oo(L)$, as we have seen in \eqref{see}, thus the orbit symplectic form \eqref{patru} is similar
to a canonical cotangent bundle symplectic form. 
This similarity comes from the fact that $\om_0$ can be obtained by symplectic reduction
on the cotangent bundle $T^*\Gr_{\Lag}^M(S)$ \cite{W90},
as explained also at the end of Section \ref{s7}.
\color{black}

\paragraph{Isotropic nonlinear Grassmannians as coadjoint orbits.} {On the space of weighted Lagrangian submanifolds, the distribution corresponding to the subspace $ \mathbf{d} C^\infty(L) \subset Z ^1 (L)$ of exact 1-forms integrates into a foliation, called the isodrastic foliation. It was argued in \cite{W90} that the leaves of this foliation, the isodrasts, are coadjoint orbits of $\Diff_{\ham}(S)$. This result was generalized in \cite{L} to the case of isotropic submanifolds.}

If $H^1(M)=0$, the distribution coincides with the whole tangent space and the leaves are connected components of  the space of
weighted isotropic submanifolds.
We shall obtain them by symplectic reduction in the ideal fluid dual pair,
so their coadjoint orbit property follows from the general results 
about dual pairs of momentum maps for mutually completely orthogonal actions.

\color{black}
\begin{theorem}\label{unul}
Let $(S,\om)$ be a prequantizable symplectic manifold 
and  $(M,\mu)$ be a  compact manifold with volume form $\mu$ and $H^1(M)=0$.
Any connected component $\O$
of  $\Gr_{\iso}^{M,\mu}(S)$,
endowed with the symplectic form \eqref{trei},
is symplectically diffeomorphic to a coadjoint orbit  of the identity component $\Diff_{\quant}(P)_0$. The symplectic diffeomorphism reads
\[
\bar \J_L:\O\rightarrow \operatorname{Den}_c(S), \quad 
{\langle \bar\J_L(N, \nu ),h\rangle= \int _N h|_N\nu}
\]
and is a momentum map for the action of $\Diff_{\quant}(P)_0$ on  $\O\subset \Gr_{\iso}^{M,\mu}(S)$.
\end{theorem}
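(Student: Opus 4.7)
The plan is to apply Proposition \ref{id2} to the ideal fluid dual pair \eqref{MW_GBVi_dual_pair} at the zero level $\si=0$ of the right momentum map, taking $H=\Diff_{\quant}(P)_0$ and $G=\Diff_{\vol}(M)$ (the possibly non-connected group, rather than its identity component $\Diff_{\ex}(M)=\Diff_{\vol}(M)_0$ that appears in the dual pair itself). The key observation that makes this legitimate is that $\J_R(f)=-f^*\om$ is already $\hat\g$-equivariant for the Lichnerowicz central extension $\hat\g$ of $\g=\X_{\vol}(M)$ from \eqref{q2}, and the natural pullback action of $\Diff_{\vol}(M)$ on $Z^2(M)=\hat\g^*$ integrates the coadjoint action of $\g$ and makes $\J_R$ equivariant, since $\J_R(f\o\ps^{-1})=-(\ps^{-1})^*f^*\om$. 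Thus the framework of Proposition \ref{id2} applies even though $\hat\g$ need not integrate to a Lie group extension of the whole (possibly disconnected) group $\Diff_{\vol}(M)$.

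At $\si=0$ the stabilizer $G_0$ equals all of $\Diff_{\vol}(M)$ and $\J_R^{-1}(0)=\Emb_{\iso}(M,S)$, so by construction of the principal bundle $\pi^\mu$ in \eqref{pim} the reduced space $S_0=\Emb_{\iso}(M,S)/\Diff_{\vol}(M)$ coincides with $\Gr_{\iso}^{M,\mu}(S)$. I would then verify the remaining hypotheses of Proposition \ref{id2}: $H=\Diff_{\quant}(P)_0$ is connected by definition and carries the equivariant momentum map $\J_L$; mutual complete orthogonality of the two actions is the dual pair property recorded in Section \ref{s5}; transitivity of $G$ on level sets of $\J_L$ is exactly Lemma \ref{lem1}; and transitivity of $H$ on connected components of level sets of $\J_R$ follows from Lemma \ref{lem2} applied to $\Diff_{\ham}(S)$, together with the surjectivity of the central extension \eqref{unu}. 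Proposition \ref{id2} then immediately yields that each connected component $\O$ of $\Gr_{\iso}^{M,\mu}(S)$ is symplectically diffeomorphic to a coadjoint orbit of $\Diff_{\quant}(P)_0$ via the descended map $\bar\J_L$ of \eqref{jele}, and its reduced symplectic form is identified with \eqref{trei} by Theorem \ref{formula_KKS}.

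It then remains only to unpack the formula for $\bar\J_L$: for $f\in\Emb_{\iso}(M,S)$ with $\pi^\mu(f)=(N,\nu)$ and $h\in C^\oo(S)$ identified with the generator of the Hamiltonian action on $\Emb(M,S)$, a direct change of variables gives
\[
\langle\bar\J_L(N,\nu),h\rangle=\langle\J_L(f),h\rangle=\int_M(h\o f)\mu=\int_Nh|_N\nu,
\]
matching the stated expression; the momentum map property for the $\Diff_{\quant}(P)_0$-action on $\O$ is inherited from $\J_L$ on $\Emb(M,S)$ via the equivariant surjection $\pi^\mu$. The main obstacle, which is essentially the content of Proposition \ref{id2}, is the need to perform reduction by the non-connected group $\Diff_{\vol}(M)$ for which no Lie group central extension integrating $\hat\g$ is known; this is sidestepped precisely by working with the coadjoint-type $G$-action on $\hat\g^*$ rather than with a would-be extended group.
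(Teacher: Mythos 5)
Your proposal is correct and follows essentially the same route as the paper: apply Proposition \ref{id2} to the dual pair \eqref{MW_GBVi_dual_pair} with $H=\Diff_{\quant}(P)_0$ and $G=\Diff_{\vol}(M)$, invoking Lemmas \ref{lem1} and \ref{lem2} for the transitivity hypotheses and then unpacking $\bar\J_L$ exactly as in \eqref{jele}. The only step the paper spells out that you merely assert is the verification that the pullback action $\ph\cdot\si=\ph^*\si$ of $\Diff_{\vol}(M)$ on $Z^2(M)=\hat\g^*$ integrates the infinitesimal coadjoint action of $\hat\g$, which is a one-line computation with the bracket \eqref{iimu}.
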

\color{black}
\begin{proof} 
 The result follows from Proposition \ref{id2} applied to the ideal fluid dual pair \eqref{MW_GBVi_dual_pair}, with the groups $H=\Diff_{\quant}(P)_0$ and $G= \operatorname{Diff}_{\rm vol}(M)$ 
(not just its identity component $ \operatorname{Diff}_{\rm ex}(M)$). Indeed, from \cite{GBVi2011} we know that the Lie algebra actions associated to the dual pair \eqref{MW_GBVi_dual_pair} are completely mutually orthogonal; from Lemma \ref{lem1}, the action of $G= \operatorname{Diff}_{\rm vol}(M )$ is transitive on the level sets of  $\J_L(f)=f_*\mu$, and $ \mathbf{J} _L$ is $H$-equivariant.
From Lemma \ref{lem2}, the action of $H=\Diff_{\quant}(P)_0$ (connected) is transitive on each connected component of the level sets of ${\mathbf{J}} _R(f)=-f^*\om$, and $\J_R$ is $G$-equivariant for the $G$-action on 
$Z^2(M)=\hat\g^*$ given by $\ph\cdot\si=\ph^*\si$. 
It remains to verify that this action integrates the infinitesimal coadjoint action in $\hat\g$:
\begin{align*}
\langle \widehat\ad^*_\xi\si,[\be]\rangle
=\langle\si,\{[\al],[\be]\}\rangle
\stackrel{\eqref{iimu}}{=}\langle\si,[\pounds_\xi\be]\rangle
=\langle\pounds_\xi\si,[\be]\rangle,
\end{align*}
for all $\xi=X_\al\in\g=\X_{\vol}(M)=\X_{\ex}(M)$, $[\al]\in\hat\g=\Om^{k-2}(M)/\dd\Om^{k-3}(M)$ and $\si\in\hat\g^*=Z^2(M)$.

The expression of $\bar\J_L$ follows from \eqref{jele}: for all $h\in C^\oo(S)$,
\[
\left\langle \bar\J_L(N, \nu ), h \right\rangle=\left\langle \bar\J_L(f(M), f_*\mu ), h \right\rangle =\left\langle \J_L(f), h \right\rangle =\int_M(h\o f)\mu
 =\int_Nh \nu,
\]
because $\J_L(f)=f_*\mu$.
\end{proof}

\color{black}
\begin{theorem}\label{doiu}
If $S$ is compact and $H^1(M)=0$, then any connected component $\O$ of  $\Gr_{\iso}^{M,\mu}(S)$,
endowed with the symplectic form \eqref{trei},
is symplectically diffeomorphic to a coadjoint orbit  of $ \operatorname{Diff}_{\rm ham}(S)$. The diffeomorphism is given by the momentum map
\[
\bar \J_L: \O\rightarrow  \mathfrak{X}_{\rm ham}(S ) ^\ast , \quad  \left\langle \bar\J_L(N, \nu ), X_h \right\rangle =\int_Nh _0 \nu ,
\]
where $h_0$ is the unique Hamiltonian function with zero integral on $S$
 associated to the Hamiltonian vector field $X_h$.
\end{theorem}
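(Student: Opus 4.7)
The plan is to mirror the strategy of Theorem \ref{unul}: apply Proposition \ref{id2} at $\sigma=0$, but this time to the compact version of the ideal fluid dual pair, namely \eqref{MW_GBVi_dual_pair_compact}, with $H=\operatorname{Diff}_{\rm ham}(S)$ in place of $\Diff_{\quant}(P)_0$ (no central extension is required on the $H$ side since $S$ is compact and the normalized momentum map $\J^0_L$ is available), and with $G=\operatorname{Diff}_{\rm vol}(M)$ rather than just its identity component. Reducing at zero should again yield $\Gr_{\iso}^{M,\mu}(S)$, and the map induced by the left momentum map will provide the claimed symplectic diffeomorphism onto a coadjoint orbit in $\X_{\rm ham}(S)^\ast$.

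Most of the ingredients transfer verbatim from the proof of Theorem \ref{unul}. Specifically, I would cite \cite{GBVi2011} for the commutativity and mutual complete orthogonality of the two actions; recall that $\J_R(f)=-f^\ast\om$ extends to a $\hat\g$-equivariant momentum map for the Ismagilov central extension $\hat\g$ of $\g=\X_{\vol}(M)$; and note that the coadjoint action of $\g$ on $\hat\g^\ast=Z^2(M)$ integrates to the pull-back action $\varphi\cdot\sigma=\varphi^\ast\sigma$ of the full group $G=\operatorname{Diff}_{\rm vol}(M)$, under which $\J_R$ is $G$-equivariant (the infinitesimal check being identical to that at the end of the proof of Theorem \ref{unul}). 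Transitivity of the connected group $H=\operatorname{Diff}_{\rm ham}(S)$ on connected components of each level set of $\J_R$ is supplied directly by Lemma \ref{lem2}.

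What still needs verification concerns the new left momentum map $\J^0_L$ defined by $\langle\J^0_L(f),X_h\rangle=\int_M(h_0\circ f)\mu$. Its $H$-equivariance should follow immediately from the change-of-variables formula, since the zero-integral normalization is preserved by Hamiltonian diffeomorphisms of $S$. The only point where I anticipate a slight subtlety is the transitivity of the non-connected group $G=\operatorname{Diff}_{\rm vol}(M)$ on level sets of $\J^0_L$; my plan is to reduce this to Lemma \ref{lem1}. Indeed, if $\J^0_L(f_1)=\J^0_L(f_2)$, then for any $h\in C^\oo(S)$ written as $h=h_0+c$ with $c$ constant, the constant part contributes $c\int_M\mu$ to $\int_M(h\circ f_i)\mu$ independently of $i$, so that the full densities satisfy $f_{1\ast}\mu=f_{2\ast}\mu$ and Lemma \ref{lem1} supplies a $\psi\in\operatorname{Diff}_{\rm vol}(M)$ with $f_2=f_1\circ\psi$.

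Once all hypotheses of Proposition \ref{id2} are in place, the conclusion is that every connected component of $\J_R^{-1}(0)/\operatorname{Diff}_{\rm vol}(M)=\Gr_{\iso}^{M,\mu}(S)$ is symplectically diffeomorphic to a coadjoint orbit of $\operatorname{Diff}_{\rm ham}(S)$, with the reduced form \eqref{trei} matching the Kirillov--Kostant--Souriau form on that orbit. The explicit formula for $\bar\J_L$ is then read directly from \eqref{jele}: for any $f\in\operatorname{Emb}_{\iso}(M,S)$ with $f(M)=N$ and $f_\ast\mu=\nu$,
\[
\langle\bar\J_L(N,\nu),X_h\rangle=\langle\J^0_L(f),X_h\rangle=\int_M(h_0\circ f)\mu=\int_N h_0\,\nu,
\]
as claimed. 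The main obstacle is precisely the transitivity check for the non-connected group $G$, but the fixed total volume built into $\Gr_{\iso}^{M,\mu}(S)$ makes it reduce cleanly to Lemma \ref{lem1}.
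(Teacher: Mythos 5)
Your proposal is correct and follows essentially the same route as the paper, which likewise proves this by applying Proposition \ref{id2} to the compact dual pair \eqref{MW_GBVi_dual_pair_compact} with $H=\operatorname{Diff}_{\rm ham}(S)$ and $G=\operatorname{Diff}_{\rm vol}(M)$. The extra details you supply — in particular the reduction of transitivity on level sets of $\J^0_L$ to Lemma \ref{lem1} via the decomposition $h=h_0+c$ — are sound and simply make explicit what the paper leaves implicit after the proof of Theorem \ref{unul}.
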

\color{black}
\begin{proof}  The result follows from Proposition \ref{id2} applied to the ideal fluid dual pair \eqref{MW_GBVi_dual_pair_compact}, with the groups $H= \operatorname{Diff}_{\rm ham}(S)$ and $G= \operatorname{Diff}_{\rm vol}(M)$.
\end{proof}

\rem{
\begin{proof}
By the proof of Proposition \ref{id2}, there is a Hamiltonian $\Diff_{\ham}(S)$-action on the reduced symplectic manifold
$\left( \Gr_{\iso}^{M,\mu}(S),\om_0 \right) $, namely $\ps\cdot(N,\nu)=(\ps(N),\ps_*\nu)$ for all $\ps\in\Diff_{\ham}(S)$.
An equivariant momentum map can thus be written 
for the Lie algebra action of the central extension $C^\oo(S)$ of ${\X}_{\ham}(S)$. 
It is given by  
$$
\bar\J_L:\Gr_{\iso}^{M,\mu}(S)\to C^\oo(S)^*,\quad \langle\bar\J_L(N,\nu),h\rangle=\int_N(h|_N)\nu, \quad h\in C^\oo(S).
$$
For compact $S$ one can use zero integral Hamiltonian functions, so the connected components
of $\Gr_{\iso}^{M,\mu}(S)$ are coadjoint orbits of $\Diff_{\ham}(S)$ too.
Note that we don't need the symplectic manifold $(S,\om)$ to be prequantizable.
\end{proof}}

\paragraph{Symplectic reduction at nonzero momentum.} Under the hypothesis $H^1(M)=0$, we shall perform the symplectic reduction from Proposition \ref{id2} at a nonzero element $-\si\in Z^2(M)$ \textcolor{black}{(\ie a presymplectic form on $M$).
The reduced symplectic space is 
$\J_R^{-1}({-\si})/\Diff_{\rm vol,\si}(M)$, for the stabilizer
\begin{equation}\label{muph}
\Diff_{\rm vol,\si}(M)=\{\ph\in\Diff(M):\ph^*\mu=\mu,\ph^*\si=\si\}.
\end{equation}
We show that the reduced space can be identified with the non-linear Grassmannian $\Gr^{M,\mu}_\si(S)$ of all volume submanifolds of type $(M,\mu)$ which,
when endowed with the pullback of the symplectic form $\om$, 
are diffeomorphic to the presymplectic volume manifold $(M,\mu,\si)$.
Indeed, }
\begin{align*}
&\{f\in\Emb(M,S):f^*\om=\si\}/\Diff_{\vol,\si}(M)\\
&\quad =\left\{(f(M),f_*\mu,f_*\si) : f \in\Emb(M,S), f^*\om=\si\right\}\\
&\quad =\left\{(N,\nu,i_N^*\om) : (N, \nu ) \in \Gr^{M,\mu}(S), \exists f\in\Emb(M,S)\text{ s.t. } f(M)=N, f_*\mu=\nu,f^*\om=\si\right\}\\
&\quad =\left\{(N,\nu) \in\Gr^{M,\mu}(S):\exists\ps\in\Diff(M,N)\text{ s.t. }\ps^*i_N^*\om=\si,\ps^*\nu=\mu\right\}\\
&\quad =\left\{(N,\nu) \in\Gr^{M,\mu}(S):(M,\mu,\si)\cong(N,\nu,i_N^*\om)\right\}=: \Gr^{M,\mu}_\si(S),
\end{align*}
where $i_N:N \rightarrow S$ denotes the inclusion.

Note that we have $\Gr^{M,\mu}_\si(S)=\J_R^{-1}(-\si)/\Diff_{\rm vol,\si}(M)$ and that $\Diff_{\rm vol,\si}(M)$ in \eqref{muph} might not be a Lie group for degenerate $\si$,
so the space $\Gr^{M,\mu}_\si(S)$ might fail to be a smooth manifold.
If it happens to be a smooth manifold, then by Proposition \ref{id2} applied to nonzero momentum
we deduce that the connected components of $\Gr^{M,\mu}_\si(S)$
are coadjoint orbits of the group of Hamiltonian diffeomorphisms, if $S$ is compact.

The reduced symplectic form $\om_{-\si}$  is formally given as in  \eqref{omega_0} by
\begin{equation}\label{omsi}
(\om_{-\si})_{\pi^{\mu}(f)}\left( T_f\pi^{\mu}(u\o f),T_f\pi^{\mu}(v\o f)\right) 
=\int_Mf^*\om(u,v)\mu
=\langle\J_L(f),\om(u,v)\rangle,
\end{equation}
where $u,v\in\X_{\ham}(S)$.


\paragraph{Nonlinear symplectic Grassmannians as coadjoint orbits.}
We now consider the special case when the presymplectic form $ \sigma $ is symplectic and $ \mu $ is the associated Liouville volume form, \ie $\mu=\si^n$, $2n= \operatorname{dim}M$. 
Then the reduced symplectic manifold $\Gr_\si^{M,\mu}(S)$
becomes a union of connected components of the nonlinear symplectic
Grassmannian {$\Gr_{\symp}^M(S)$}, namely those
components containing symplectic submanifolds of type $(M,\si)$.
Indeed,
$\Diff_{\vol,\si}(M)=\Diff_\si(M)$ since
$\ps_*\mu=\ps_*\si^n=\si^n=\mu$ for $\psi^*\si=\si$,
so the reduced symplectic space becomes
\begin{align*} 
\{f\in\Emb(M,S):f^*\om=\si\}/\Diff_{\si}(M)
&=\{(N,i_N^*\om):\exists\ps\in\Diff(N,M),\text{ s.t. }\ps^*i_N^*\om=\si\}\\
&=\left\{N\in\Gr^M(S) :(M,\si)\cong(N,i_N^*\om)\right\}\subset \Gr_{\symp}^M(S).
\end{align*}
With Proposition \ref{id2} we recover the result of \cite{Haller-Vizman}
that all the connected components of $\Gr_{\symp}^M(S)$
are coadjoint orbits of $\Diff_{\ham}(S)$.

In the general expression of the symplectic diffeomorphism \eqref{jele} we recognize the momentum map 
\eqref{stefan}:
\[
\langle\bar\J_L(N,\nu),h\rangle=\int_Nh\nu=\int_N hi_N^*\om^n=\int_N h\om^n,
\]
since the volume form on $N$ in this case is $\nu=\ps_*\mu=\ps_*\si^n=i_N^*\om^n$.
One can also compare the formula \eqref{omsi} for the orbit symplectic form
with the orbit symplectic form obtained in \cite{Haller-Vizman}.
At the point $\pi^{\mu}(f)=(f(M),f_*\mu)=(N,\nu=i_N^*\om^n)$,
for all  Hamiltonian vector fields $u,v$ on $S$  we have
\begin{align*}
(\om_{-\si})_{\pi^{\mu}(f)}\left( T_f\pi^{\mu}(u\o f),T_f\pi^{\mu}(v\o f)\right) 
&=\int_Mf^*\om(u,v)\mu=\int_Mf^*\om(u,v)f^*\om^n\\
&=\int_N\om(u,v)\om^n=\frac{1}{n+1}\int_N \mathbf{i} _v \mathbf{i} _u\om^{n+1}.
\end{align*}
Under the identification of  $\Gr_\si^{M,\mu}(S)$ with an open subset of
the nonlinear symplectic Grassmannian $\Gr_{\symp}^M(S)$,
this is exactly the orbit symplectic form obtained by \cite{Haller-Vizman}, as recalled at the beginning of Section \ref{s6}.


\section{Prequantizable coadjoint orbits}\label{s7}

Let $p:(P,\al)\to (S,\om)$ be a prequantum bundle over the symplectic manifold $S$. 
A horizontal submanifold of $P$ that covers a Lagrangian submanifold of $S$
is called \textit{Planckian} in \cite{Souriau}; a more familiar name is \textit{Legendre submanifold} of the contact manifold $(P,\al)$.
The prequantization of isodrasts of weighted Lagrangian submanifolds of $S$
is obtained in Section 4 of \cite{W90} using the symplectic reduction of the cotangent bundle of the space of Planckian submanifolds of $P$ 
that descend to Lagrangian submanifolds in an isodrast. This leads to the so called \textit{Berry bundle}.

{In this section we will generalize this result to the case of isotropic submanifolds, thereby obtaining a generalization of the Berry bundle.}
Given a prequantizable symplectic manifold $S$ 
with prequantum bundle  $p:(P,\al)\to(S,\om)$
and a compact manifold $(M,\mu)$ with integral total volume 
\begin{equation}\label{a}
a=\int_M\mu\in\ZZ,
\end{equation} 
we will build a prequantum bundle for the coadjoint orbits of the quantomorphism group that are
realized by connected components of $\Gr_{\iso}^{M,\mu}(S)$. In the last part of this Section, we show that our prequantum bundle recovers the Berry bundle in the Lagrangian case.

We first build a principal circle bundle over $\Emb_{\iso}(M,S)$. For this we define the \textit{manifold of horizontal embeddings},
\[
\Emb_{\hor}(M,P):=\{F\in\Emb(M,P):F^*\al=0\}.
\]
The  quantomorphism group $\Diff_{\quant}(P)$ acts on
 $\Emb_{\hor}(M,P)$ by composition on the right. The infinitesimal action of the Lie algebra of connection preserving vector fields
\[
\X_{\quant}(P){=}\{\xi_h:=X_h^{\hor}-(h\o p)E:h\in C^\oo(S)\}
\]
is given by $(\xi_h)_{\Emb}(F)=\xi_h\o F$.

\begin{lemma}\label{lemma_emb_hor} 
If the compact manifold $M$ satisfies $H^1(M)=0$,
then the principal circle action $(z,F)\mapsto{\rho_z \circ F}$
determines a principal circle bundle
\begin{equation}\label{pbar}
p_*: \Emb_{\hor}(M,P)\to\Emb_{\iso}(M,S), \quad p_*(F)=p\o F.
\end{equation}
\end{lemma}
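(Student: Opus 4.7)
The plan is to verify the four standard properties defining a principal $S^1$-bundle for $p_*$: well-definedness, $p_*$-invariance and freeness of the right $S^1$-action, the identification of fibers with $S^1$-orbits, and local triviality. First, for $F\in\Emb_{\hor}(M,P)$ the composition $p\o F$ is an embedding (since $F$ is horizontal, it is transverse to the fibers of $p$), and it is isotropic by the computation $(p\o F)^*\om = F^*p^*\om = F^*\dd\al = \dd(F^*\al)=0$, so $p_*$ maps into $\Emb_{\iso}(M,S)$. The $S^1$-action preserves horizontality because $\al$ is $S^1$-invariant, is free because the principal action on $P$ is free, and satisfies $p\o\rho_z=p$, so $p_*$ is $S^1$-invariant.

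For the fiber-versus-orbit identification, given $F_1,F_2\in\Emb_{\hor}(M,P)$ with $p\o F_1=p\o F_2$, the principal bundle structure on $P$ yields a unique smooth $z:M\to S^1$ with $F_2=\rho_{z}\o F_1$. The gauge transformation rule for an abelian principal connection reads
\[
F_2^*\al = F_1^*\al + z^*\th_{S^1},
\]
where $\th_{S^1}$ is the Maurer--Cartan form on $S^1$. Horizontality of $F_1$ and $F_2$ forces $z^*\th_{S^1}=0$, so $z$ is locally constant; taking $M$ connected as is customary, $z$ is a constant element of $S^1$ and $F_1,F_2$ lie in the same $S^1$-orbit.

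For local triviality near $F_0\in\Emb_{\hor}(M,P)$ covering $f_0:=p_*(F_0)$, I would construct a smooth local section $\si$ of $p_*$ on a neighborhood $\U\subset\Emb_{\iso}(M,S)$ of $f_0$. Using local trivializations of $P\to S$ around the compact image $f_0(M)$, one first produces a smooth family of (not necessarily horizontal) lifts $f\mapsto \tilde F_f$ close to $F_0$. Since $f=p\o\tilde F_f$ is isotropic, $\tilde F_f^*\al\in\Om^1(M)$ is closed, hence exact by the hypothesis $H^1(M)=0$. Fixing an auxiliary Riemannian metric on $M$, let $h_f\in C^\oo(M)$ be the unique primitive of $\tilde F_f^*\al$ with zero mean; this $h_f$ depends smoothly on $f$ via the Green operator of the Hodge Laplacian. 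Setting $\si(f):=\rho_{\exp(-ih_f)}\o\tilde F_f$ yields a horizontal lift of $f$, since by the gauge rule above $\si(f)^*\al=\tilde F_f^*\al-\dd h_f=0$, and the assignment $(z,f)\mapsto \rho_z\o\si(f)$ is the desired local trivialization of $p_*$ over $\U$.

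The main obstacle is the smooth-in-$f$ selection of the primitive $h_f$; the Hodge-theoretic normalization to zero mean, extended from the finite-dimensional Riemannian setting to the Fr\'echet manifold of embeddings via the smoothness of the Green operator on exact forms, resolves this. Every other step is essentially formal once the $S^1$-invariance and curvature identity for $\al$ are in hand.
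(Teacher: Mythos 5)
Your argument is careful on the points it addresses: the fiber-equals-orbit step matches the paper's (constancy of the gauge function $z$ from $z^*\theta_{S^1}=0$, exactly the paper's $0=F_2^*\al=F_1^*\al+g^{-1}\mathbf{d}g$), and your local-triviality construction via a Hodge-normalized primitive is more explicit than anything in the paper, which does not discuss smooth local sections at all. But there is a genuine gap: you never prove that $p_*$ is \emph{surjective} onto $\Emb_{\iso}(M,S)$, i.e.\ that an arbitrary isotropic embedding admits a horizontal lift. Your fiber analysis only shows that a nonempty fiber is a single $S^1$-orbit, and your local sections are built only over neighborhoods of points $f_0=p_*(F_0)$ that are already known to be in the image, starting from the given lift $F_0$. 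Without surjectivity, the base of the bundle cannot be claimed to be all of $\Emb_{\iso}(M,S)$.

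This missing step is the heart of the paper's proof and the place where the topological hypothesis really enters. Your gauge-fixing trick (pull back $\al$ by an arbitrary lift $\tilde F_f$, observe that $\tilde F_f^*\al$ is closed since $f^*\om=0$, use $H^1(M)=0$ to find a primitive, and rotate it away) does upgrade a global lift to a horizontal one --- but it presupposes that a global lift of $f$ exists at all, i.e.\ that the pulled-back circle bundle $f^*P\to M$ is trivial, and that is precisely what must be proved for an arbitrary isotropic $f$. The paper does this by a holonomy argument: $f^*\om=0$ makes the restricted holonomy of $P|_{f(M)}$ trivial, so the holonomy is a homomorphism $\pi_1(f(M))\to S^1$, which factors through $H_1(f(M))=0$ and is therefore trivial; hence $P|_{f(M)}$ admits a flat trivialization foliated by horizontal leaves, each projecting diffeomorphically onto $f(M)$, and composing the inverse of such a projection with $f$ gives the horizontal lift. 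To repair your proof you would need to insert this (or an equivalent triviality argument for $f^*P$) before the gauge-fixing step. A smaller point, which the paper shares: transversality of $F$ to the fibers only makes $p\o F$ an immersion, so the injectivity of $p\o F$ is a separate matter that both you and the paper leave implicit.
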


\begin{proof} Every horizontal embedding $F$ descends to an isotropic embedding $f=p \circ F$,
since $f^*\om=
\mathbf{d} F^*\al=0$.
Two horizontal embeddings $F_1$ and $F_2$ that descend to the same isotropic embedding 
differ by the action of some element in $S^1$
because $F_2=\rh_g\o F_1$ for some function $g\in C^\oo(M,S^1)$,
but $0=F_2^*\al=F_1^*\al+g ^{-1} \mathbf{d} g=g ^{-1} \mathbf{d} g$ implies that $g$ is a constant function,
so $F_2=\rh_z\o F_1$, $z\in S^1$.

We show that each isotropic embedding $f$ of $M$ in $S$ 
can be lifted to a horizontal embedding $F$ of $M$ in $P$.
The restricted holonomy around loops in $f(M)$ is trivial because $f^*\om=0$.
Moreover, the holonomy groups are also trivial:
the image of the holonomy homomorphism $\pi_1(f(M))\to S^1$
is trivial because the abelianization of $\pi_1(f(M))$ is $H_1(f(M))=0$.
Hence, over $f(M)$, the circle bundle admits a flat trivialization,
which means it is foliated by horizontal leaves, and
each of them projects diffeomorphically onto $f(M)$.
The embedding of $M$ onto such a horizontal leaf,
obtained by composition of the inverse of such a diffeomorphism with $f$, 
is a horizontal embedding $F$ with $f=p\o F$.
\end{proof}

\medskip
We now define a principal connection on the circle bundle \eqref{pbar} whose curvature is 
the closed 2-form $\bar\om$ defined in \eqref{crc}.
The form $\al\in\Om^1(P)$, together with the volume form $\mu$ on $M$,
determines the form $\bar\al\in\Om^1(\Emb_{\hor}(M,P))$ given, 
for every vector field $v_F$ on $P$ along $F$, by 
\begin{equation}\label{alfa}
\bar\al(v_F):=\int_M\al(v_F)\mu.
\end{equation}

If the total volume of $M$ is equal to 1 (i.e., $a=1$ in \eqref{a}),
then $\bar\al$ is a principal connection 1-form with curvature $\bar\om$.
The verifications that $\bar\al$ is a principal connection with curvature $\bar\om$
can be done directly or by applying the calculus on manifolds of functions
developed in \cite{Vizman}, especially the {functorial} properties of the map $ \alpha \mapsto \bar \alpha $.
The $S^1$-invariance of $\bar\al$ follows from  $((\rh_z)_*)^*\bar\al
=\overline{\rh_z^*\al}=\bar\al$.
The form $\bar\al$ reproduces the infinitesimal generators because 
for the infinitesimal generator $E_{\Emb}$ of the $S^1$-action on $\Emb_{\hor}(M,P)$, which is induced by the infinitesimal generator $E\in\X(P)$ of the circle action on $P$, 
we have 
$$\bar\al(E_{\Emb})(F)=\bar\al(E\o F)=\int_M(\al(E)\o F)\mu=\int_M\mu=1,$$ since $\al(E)=1$.
Finally the curvature computation is $\mathbf{d} \bar\al=\overline{\mathbf{d} \al}=\overline{p^*\om}=(p_*)^*\bar\om$.

Later, we will use the following transitivity result.

\begin{lemma}\label{lem4}
If $H^1(M)=0$, then the Lie algebra $\X_{\quant}(P)$  acts transitively on  $\Emb_{\hor}(M,P)$.
\end{lemma}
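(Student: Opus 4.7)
My plan is to show that every $w_F \in T_F\Emb_{\hor}(M,P)$ equals $\xi_h \o F$ for some $h \in C^\oo(S)$, by projecting to $\Emb_{\iso}(M,S)$, applying the infinitesimal transitivity of $\X_{\ham}(S)$ there (Lemma~\ref{lem2}), and correcting the resulting vertical discrepancy. The first step is to linearise the horizontality constraint $F^*\al = 0$. Setting $f := p \o F \in \Emb_{\iso}(M,S)$ and $v_f := Tp \o w_F \in \Ga(f^*TS)$, differentiation of $F_t^*\al = 0$ at $t=0$ along any path $F_t$ in $\Emb_{\hor}(M,P)$ with $\dot F_0 = w_F$, combined with $\mathbf{d}\al = p^*\om$, yields
\begin{equation}\label{linhor_plan}
\mathbf{d}(\al(w_F)) + f^*(\mathbf{i}_{v_f}\om) = 0 \quad\text{in } \Om^1(M).
\end{equation}
Both terms depend only on $w_F$ (not on any extension to a vector field on $P$), and the identity shows in particular that $v_f \in T_f\Emb_{\iso}(M,S)$.

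Next, by the infinitesimal transitivity recorded in \eqref{isos}, there exists $h \in C^\oo(S)$ with $v_f = X_h \o f$. Since $\xi_h$ preserves the connection $\al$, the infinitesimal action $\xi_h \o F$ lies in $T_F\Emb_{\hor}(M,P)$, and $Tp_*(\xi_h \o F) = X_h \o f = v_f = Tp_*(w_F)$. Hence $w_F - \xi_h \o F$ is vertical and equals $\ps \cdot (E \o F)$ for a unique $\ps \in C^\oo(M)$. Applying \eqref{linhor_plan} to this vertical tangent vector — for which $Tp \o (\ps E \o F) = 0$ and $\al(\ps E \o F) = \ps$ — forces $\mathbf{d}\ps = 0$, so $\ps$ is locally constant. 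For connected $M$ one has $\ps = c \in \RR$, and the identity
$$
\xi_{h-c} = X_h^{\hor} - ((h-c)\o p)E = \xi_h + cE
$$
immediately gives $w_F = \xi_{h-c} \o F$, establishing transitivity. If $M$ is disconnected with $\ps|_{M_i} = c_i$ on each component $M_i$, one instead subtracts from $h$ a function $\chi \in C^\oo(S)$ equal to $c_i$ on disjoint tubular neighbourhoods of $f(M_i)$; then $\mathbf{d}\chi|_{f(M)} = 0$, so $X_{h-\chi}\o f = v_f$ is unchanged while the constants $c_i$ are absorbed into the vertical part.

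The argument is mostly routine once \eqref{linhor_plan} is in place; the small subtlety is the intrinsic derivation of this linearised horizontality condition and the observation that tangency of a purely vertical variation $\ps(E \o F)$ to $\Emb_{\hor}(M,P)$ is equivalent to $\mathbf{d}\ps = 0$. Given these, Lemma~\ref{lem2} provides the Hamiltonian lift, and the rest is the elementary principal-bundle correction via $\xi_{h-c} = \xi_h + cE$; no further analytic input beyond $H^1(M) = 0$ is needed.
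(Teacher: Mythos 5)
Your proof is correct and follows essentially the same route as the paper's: reduce to the base via Lemma \ref{lem2} to produce the Hamiltonian function $h$, then show the remaining vertical discrepancy is a constant that can be absorbed into $\xi_{h-c}=\xi_h+cE$ (the paper phrases this by choosing a projectable extension $v=\bar v^{\hor}-\la E$ and computing $0=F^*\pounds_v\al=\dd(f^*h-F^*\la)$, which is the same linearised horizontality identity you derive intrinsically). Your treatment of disconnected $M$ is a small refinement the paper glosses over when it asserts the difference is a single real constant.
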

\begin{proof}
We have  to show that, 
for any $v\o {F} \in T_F\Emb(M,P)$, where $v\in\X(P)$ is such that
$F^*\pounds _{v}\al=0$,
there is a connection preserving vector field $\xi_h$ on $P$ such that $v\o F=\xi_h\o F$.
Since $p$ induces a diffeomorphism between $F(M)$ and $f(M)$,
and we are interested only in the restriction of $v$ to $F(M)$,
we can assume without loss of generality that $v$ is projectable  to $\bar v\in\X(S)$. Hence there exists $\la\in C^\oo(P)$ such that
$v=\bar v^{\hor}-\la E$.
By differentiating the condition $F^*\pounds _{v}\al=0$ we get $f^*\pounds _{\bar v}\om=0$,
so,  by Lemma \ref{lem2},  there exists $h\in C^\oo(S)$ with $\bar v\o f=X_h\o f$. We compute
\[
0=F^*\pounds _{v}\al=F^*\mathbf{i}_{\bar v^{\hor}}\dd\al-F^*\dd\mathbf{i}_{\la E}\al=f^*\mathbf{i}_{\bar v}\om-F^*\dd\la=\dd(f^*h-F^*\la)
\]
so $F^*\la$ and $f^*h$ differ by a real constant $c$.
Finally we get
\[
v\o F=\bar v^{\hor}\o F-(F^*\la)(E\o F)=X_h^{\hor}\o F-(f^*(h+c))(E\o F)
=\xi_{h+c}\o F
\]
thus proving the transitivity.
\end{proof}

\medskip

The manifold $\Emb_{\hor}(M,P)$, factorized by the $\Diff_{\vol}(M)$-action,
determines the manifold (see the appendix) of \textit{horizontal volume submanifolds of $P$ of type $(M,\mu)$},
$$
\Gr_{\hor}^{M,\mu}(P):=\Emb_{\hor}(M,P)/\Diff_{\vol}(M).
$$
The forgetting map defines a projection of  $\Gr_{\hor}^{M,\mu}(P)$ to 
the manifold of horizontal submanifolds of $P$ of type $M$, namely $\Gr_{\hor}^{M}(P):=\Emb_{\hor}(M,P)/\Diff(M)$.

\color{black}
\begin{proposition}[{Case $a=1$}]\label{pp}
Let $(S,\om)$ be a prequantizable symplectic manifold 
and let $(M,\mu)$ be a compact manifold with $H^1(M)=0$ and
total volume equal to 1.
Then the connected components of  $\Gr_{\iso}^{M,\mu}(S)$
are prequantizable coadjoint orbits of $\Diff_{\quant}(P)_0$ $($and also of $ \operatorname{Diff}_{\rm ham}(S)$ when $S$ is compact$)$. The prequantum bundle is $\Gr_{\hor}^{M,\mu}(P)$.
\end{proposition}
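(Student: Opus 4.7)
My plan is to build the prequantum bundle by descent from the principal circle bundle $p_*\colon\Emb_{\hor}(M,P)\to\Emb_{\iso}(M,S)$ constructed in Lemma~\ref{lemma_emb_hor}, using the fact (established just above the statement, since $a=1$) that the 1-form $\bar\al$ of \eqref{alfa} is a principal $S^1$-connection on $p_*$ with curvature $\bar\om$. The idea is to simultaneously quotient top and bottom by the $\Diff_{\vol}(M)$-action and show that everything survives the quotient.

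First I would establish the principal $S^1$-bundle structure on $\Gr_{\hor}^{M,\mu}(P)\to\Gr_{\iso}^{M,\mu}(S)$. The two commuting actions on $\Emb_{\hor}(M,P)$, namely the left $S^1$-action $(z,F)\mapsto\rho_z\o F$ and the right $\Diff_{\vol}(M)$-action $(F,\ps)\mapsto F\o\ps$, yield a residual $S^1$-action on the quotient $\Gr_{\hor}^{M,\mu}(P)=\Emb_{\hor}(M,P)/\Diff_{\vol}(M)$. Two horizontal embeddings $F_1,F_2$ project to the same weighted isotropic submanifold $(N,\nu)$ exactly when $p\o F_1$ and $p\o F_2$ differ by an element of $\Diff_{\vol}(M)$; after absorbing that diffeomorphism, the argument in Lemma \ref{lemma_emb_hor} shows $F_1$ and $F_2$ differ by an element of $S^1$. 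Freeness follows because $\rh_z\o F=F\o\ps$ forces $\ps=\id$ (as $p\o F$ is an embedding) and hence $z=1$.

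Next I would descend the connection. I need to check that $\bar\al$ is $\Diff_{\vol}(M)$-invariant and vanishes on the infinitesimal generators of the $\Diff_{\vol}(M)$-action. Invariance: for $\ps\in\Diff_{\vol}(M)$, the change of variables $\ps^*\mu=\mu$ in \eqref{alfa} gives $R_\ps^*\bar\al=\bar\al$. Horizontality: the infinitesimal generator at $F$ of $w\in\X_{\vol}(M)$ is $TF\o w$, and $\bar\al(TF\o w)=\int_M(F^*\al)(w)\,\mu=0$ since $F^*\al=0$. Consequently $\bar\al$ descends to a 1-form $\al_0$ on $\Gr_{\hor}^{M,\mu}(P)$ that is tautologically $S^1$-invariant, and reproduces the infinitesimal generator of $S^1$ (the computation $\bar\al(E\o F)=\int_M\mu=a=1$ passes to the quotient). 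Hence $\al_0$ is a principal connection on the $S^1$-bundle $\Gr_{\hor}^{M,\mu}(P)\to\Gr_{\iso}^{M,\mu}(S)$.

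Finally I would identify its curvature with the reduced symplectic form $\om_0$ on $\Gr_{\iso}^{M,\mu}(S)$. In the commuting square formed by $p_*$ and the two quotient projections $q_1,q_2$ by $\Diff_{\vol}(M)$, one computes
\begin{equation*}
q_1^*(\mathbf{d}\al_0)=\mathbf{d}\bar\al=p_*^*\bar\om=p_*^*q_2^*\om_0=q_1^*(\pi_{P,S})^*\om_0,
\end{equation*}
where $\pi_{P,S}\colon\Gr_{\hor}^{M,\mu}(P)\to\Gr_{\iso}^{M,\mu}(S)$ is the induced bundle projection; since $q_1$ is a surjective submersion this yields $\mathbf{d}\al_0=(\pi_{P,S})^*\om_0$. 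Restricting everything to a single connected component and invoking Theorems \ref{unul} and \ref{doiu} to realize that component as a coadjoint orbit of $\Diff_{\quant}(P)_0$ (resp.\ $\Diff_{\ham}(S)$ when $S$ is compact) then completes the proof. The main delicate point is the descent of the principal bundle structure, but this reduces to the two observations that the $S^1$- and $\Diff_{\vol}(M)$-actions commute and that the $\Diff_{\vol}(M)$-isotropy of any $F\in\Emb_{\hor}(M,P)$ is trivial; the Fr\'echet manifold aspects are handled by the appendix.
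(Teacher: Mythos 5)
Your proposal is correct and follows essentially the same route as the paper: descend the form $\bar\al$ of \eqref{alfa} to $\al_0$ on $\Gr_{\hor}^{M,\mu}(P)$ by checking $\Diff_{\vol}(M)$-invariance and vanishing on infinitesimal generators, verify the connection axioms using $\int_M\mu=1$, and obtain the curvature identity $\mathbf{d}\al_0=\bar p^*\om_0$ from the commuting square of quotient maps. Your added verification that the residual $S^1$-action on the quotient is free and simply transitive on fibers is a welcome (if minor) supplement to what the paper leaves implicit.
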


\begin{proof}
The candidate prequantum bundle is the circle bundle
\begin{equation}\label{berry}
\bar p: (\Gr_{\hor}^{M,\mu}(P),\al_0)\to( \Gr_{\iso}^{M,\mu}(S),\om_0), \quad\bar p(Q,\nu):=\left(p(Q),p_*\nu\right),
\end{equation}
where the connection 1-form $\al_0$ descends \textcolor{black}{from the 1-form $\bar\al$
on $\Emb_{\hor}(M,P)$ through the quotient map $\pi^\mu$,
while the curvature $\om_0$ is the reduced symplectic form.}
\color{black}

We verify that $\bar\al$ from \eqref{alfa} is a basic form
for the principal  bundle $\Emb_{\hor}(M,S)\to\Gr^{M,\mu}_{\hor}(S)$
with structure group $\Diff_{\vol}(M)$, \ie it is $\Diff_{\vol}(S)$-invariant
and vanishes on infinitesimal generators. Let $F$ be a horizontal embedding, so $F^*\al=0$. Then
\begin{align*}
\bar\al(TF\o u)&=\int_M\al(TF\o u)\mu=\int_M(F^*\al)(u)\mu=0
,\quad u\in\X_{\vol}(M).
\end{align*}
The action of $\ph\in\Diff_{\vol}(M)$ reads $\bar\ph(F)=F\o\ph$, and we have
\begin{align*}
(\bar\ph^*\bar\al)(v_F)
=\bar\al(v_F\o\ph)=\int_M(\al(v_F)\o\ph)\mu
=\int_M\al(v_F)\mu=\bar\al(v_F).
\end{align*}
Hence $\bar\al$ descends to a 1-form $\al_0$ on  $\Gr_{\hor}^{M,\mu}(P)$.

The next step is to show that $\al_0$ is a principal  connection 1-form with curvature 2-form $\om_0$. 
Since the total volume of $M$ is equal to 1, the form $\al_0$ reproduces the infinitesimal generators.
Indeed, for the infinitesimal generator $E_{\Gr}$ of the $S^1$-action on $\Gr_{\hor}^{M,\mu}(P)$, induced by the infinitesimal generator $E\in\X(P)$ of the circle action on $P$, we have
$$\al_0(E_{\Gr})(\pi^\mu(F))=\al_0(T_F\pi^\mu(E\o F))=\bar\al(E\o F)=1.$$
The $S^1$-invariance of $\al_0$ follows from the commutative diagram on the right
\[
\xymatrix{
\Emb_{\hor}(M,P)\ar[d]_{\pi^\mu}\ar[r]^{p_*} &\Emb_{\iso}(M,S) \ar[d]_{\pi^\mu} \\
\Gr_{\hor}^{M,\mu}(P)\ar[r]^{\bar p}& \Gr_{\iso}^{M,\mu}(S)\\
}\qquad  \qquad \xymatrix{
\Emb_{\hor}(M,P)\ar[d]_{\pi^\mu}\ar[r]^{{(\rh_z)}_*} &\Emb_{\hor}(M,P) \ar[d]_{\pi^\mu} \\
\Gr_{\hor}^{M,\mu}(P)\ar[r]^{\bar\rh_z}& \Gr_{\hor}^{M,\mu}(P)\\
}
\]
because $(\pi^\mu)^*(\bar\rh_z^*\al_0)=((\rh_z)_*)^*(\pi^\mu)^*\al_0=((\rh_z)_*)^*\bar\al
=\bar\al=(\pi^\mu)^*\al_0$.
In order to compute the curvature, we use 
the commutative diagram on  the left. 
Recall that $\om_0$ is the reduced symplectic form, so $(\pi^\mu)^*\om_0=\bar\om$.
We get that $(\pi^\mu)^*(\mathbf{d} \al_0)=\mathbf{d} \bar\al=
(p_*)^*\bar\om=(p_*)^*(\pi^\mu)^*\om_0=(\pi^\mu)^*(\bar p^*\om_0)$,
hence the identity $\mathbf{d} \al_0=\bar p^*\om_0$.

This finishes the proof that the circle bundle $\bar p: (\Gr_{\hor}^{M,\mu}(P),\al_0)\to( \Gr_{\iso}^{M,\mu}(S),\om_0)$ is a prequantum bundle.
\end{proof}

\medskip

To treat the general case $\int_M\mu=a\; {\in \mathbb{Z}}$, we will make use of the following simple lemma.


\begin{lemma}
Let $(S,\om)$ be a symplectic manifold,
base of a principal circle bundle $p:P\to S$ endowed with 
an invariant form $\al\in\Om^1(P)$ such that $\dd\al=p^*\om$
and $\al(E)=a\in\ZZ$.
Then $S$ is prequantizable.

A prequantum bundle is 
$p_a:P/{\ZZ_a}\to S$, for the pullback $\ZZ_a$-action on $P$ 
by the natural group homomorphism 
\begin{equation}\label{zea}
\ZZ_a=\ZZ/a\ZZ\to S^1,\quad [j]\mapsto e^{\frac{j}{a}2\pi i},
\end{equation}
endowed  with the principal circle action 
\begin{equation}\label{zact}
z\cdot[y]=[w\cdot y],\text{ where $w^a=z$}.
\end{equation}
\end{lemma}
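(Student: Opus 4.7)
The plan is to construct $P/\ZZ_a$ as a principal $S^1$-bundle over $S$ for the action \eqref{zact}, descend $\al$ to a 1-form on the quotient, and verify that this form is a principal connection whose curvature is $\om$. For the bundle structure, the $\ZZ_a$-action on $P$ induced by \eqref{zea} is the restriction of the free $S^1$-action to a finite subgroup, hence free and properly discontinuous, so $\pi_a:P\to P/\ZZ_a$ is a smooth regular $\ZZ_a$-cover and in particular a local diffeomorphism. Since $p$ is $\ZZ_a$-invariant it factors through a smooth surjection $p_a:P/\ZZ_a\to S$, and the prescription \eqref{zact} is unambiguous because two $a$-th roots of the same $z$ differ by an element of $\ZZ_a$. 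This defines a free $S^1$-action on $P/\ZZ_a$ with quotient $S$, making $p_a$ into a principal $S^1$-bundle.

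Next, since $\al$ is $S^1$-invariant it is automatically $\ZZ_a$-invariant, and because the $\ZZ_a$-action is discrete there are no infinitesimal-generator constraints to check; hence $\al$ descends uniquely to a 1-form $\al_a$ on $P/\ZZ_a$ with $\pi_a^*\al_a=\al$. Its $S^1$-invariance on the quotient follows from the intertwining $\pi_a\o R_w=R_z\o\pi_a$ (with $w^a=z$) together with the $S^1$-invariance of $\al$. The subtle point, and the one I expect to be the main obstacle because of the careful bookkeeping between the two $S^1$-actions, is relating the infinitesimal generator $E'$ of the new $S^1$-action on $P/\ZZ_a$ to the generator $E$ of the original action on $P$. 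Parametrising $z=e^{i\th}$ and choosing the continuous $a$-th root $w=e^{i\th/a}$ in \eqref{zact}, differentiation at $\th=0$ yields $E'([y])=T_y\pi_a\cdot\tfrac{1}{a}E(y)$, and this factor $1/a$ combines with the hypothesis $\al(E)=a$ in the next step.

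Consequently, $\al_a(E')=\tfrac{1}{a}(\pi_a^*\al_a)(E)=\tfrac{1}{a}\al(E)=1$, so $\al_a$ reproduces its infinitesimal generator and is a principal connection 1-form on $p_a:P/\ZZ_a\to S$. For the curvature, $\pi_a^*(\mathbf{d}\al_a)=\mathbf{d}\al=p^*\om=\pi_a^*(p_a^*\om)$, and because $\pi_a$ is a local diffeomorphism this forces $\mathbf{d}\al_a=p_a^*\om$. Thus $p_a:(P/\ZZ_a,\al_a)\to(S,\om)$ is a prequantum bundle, showing that $(S,\om)$ is prequantizable.
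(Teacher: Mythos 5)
Your proof is correct and follows essentially the same route as the paper's: well-definedness of the action \eqref{zact} via \eqref{zea}, descent of $\al$ to $\al_a$ with $\pi_a^*\al_a=\al$, the observation that $\tfrac1a E$ is $\pi_a$-related to the new generator so that $\al_a(E_a)=\tfrac1a\al(E)=1$, and the curvature identity pulled back through the local diffeomorphism $\pi_a$. You simply spell out a few of the routine points (proper discontinuity, the quotient bundle structure) that the paper leaves implicit.
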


\begin{proof}
First we check that the action \eqref{zact} is well defined, \ie it doesn't depend on the solution
$w$ of $w^a=z$. Any other solution is of the form $ e^{\frac{j}{a}2\pi i}w$,
so we get the same result 
$$[ e^{\frac{j}{a}2\pi i}w\cdot y]\stackrel{\eqref{zea}}{=}[[j]\cdot(w\cdot y)]=[w\cdot y].$$
We now show that the form $\al_a$ that descends from $\al$ through
the projection $\pi:P\to P/{\ZZ_a}$ is a principal connection. From $\pi^* \mathbf{d} \al_a= \mathbf{d} \al=p^*\om=\pi^*p_a^*\om$, we deduce that $ \mathbf{d} \al_a=p_a^*\om$.
After noticing that the infinitesimal generators $\frac1aE\in\X(P)$ and  $E_a\in\X(P/{\ZZ_a})$ for the action \eqref{zact}
are $\pi$-related, we get that $\al_a(E_a)=\frac1a\al(E)=1$.
Now it easily follows that $\al_a$ is a principal connection.
\end{proof}

\color{black}

\color{black}
\begin{proposition}[{Case $a \in \mathbb{Z}  $}]\label{integra}
Let $(S,\om)$ be a prequantizable symplectic manifold 
with prequantum bundle $P$
and let $(M,\mu)$ be a compact manifold with $H^1(M)=0$ and 
total volume $a\in\ZZ$.
Then the connected components $\Gr_{\iso}^{M,\mu}(S)$
are prequantizable coadjoint orbits of $\Diff_{\quant}(P)_0$ $($and also of $ \operatorname{Diff}_{\rm ham}(S)$ when $S$ is compact$)$. 
The prequantum bundle is $\Gr_{\hor}^{M,\mu}(P)/{\ZZ_a}$.
\end{proposition}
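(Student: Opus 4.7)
\medskip

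\noindent\textbf{Proof sketch.} The plan is to follow the construction of Proposition \ref{pp} as far as possible, and then to use the preceding lemma as a black box to absorb the failure of the normalization $\al_0(E_{\Gr})=1$ when $a\ne 1$.

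Concretely, first I would verify that the circle bundle $\bar p\colon \Gr_{\hor}^{M,\mu}(P) \to \Gr_{\iso}^{M,\mu}(S)$ from \eqref{berry} remains a principal $S^1$-bundle for any $a\in\ZZ$: the $S^1$- and $\Diff_{\vol}(M)$-actions on $\Emb_{\hor}(M,P)$ commute and are free, and Lemma \ref{lemma_emb_hor} supplies the bundle structure of $p_*$, so these properties descend through the $\Diff_{\vol}(M)$-quotient $\pi^\mu$. Next, I would observe that the computations in the proof of Proposition \ref{pp} that descend $\bar\al$ from \eqref{alfa} to a 1-form $\al_0$ on $\Gr_{\hor}^{M,\mu}(P)$ and establish both its $S^1$-invariance and the curvature identity $\mathbf{d}\al_0 = \bar p^*\om_0$ are volume-independent --- they rely only on $F^*\al=0$ and on the functoriality of the overline construction. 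The sole change occurs in the evaluation on the infinitesimal generator $E_{\Gr}$ of the $S^1$-action, which now yields
\[
\al_0(E_{\Gr})(\pi^\mu(F)) = \bar\al(E\o F) = \int_M \mu = a.
\]

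At this point the preceding lemma applies directly, with base (each connected component of) $\Gr_{\iso}^{M,\mu}(S)$, total space $\Gr_{\hor}^{M,\mu}(P)$, and invariant 1-form $\al_0$ satisfying $\mathbf{d}\al_0 = \bar p^*\om_0$ together with $\al_0(E_{\Gr})=a\in\ZZ$. Its conclusion yields exactly the desired prequantum bundle $\Gr_{\hor}^{M,\mu}(P)/\ZZ_a \to \Gr_{\iso}^{M,\mu}(S)$, with the $\ZZ_a$-action induced from the $S^1$-action on $P$ via \eqref{zea} and the prequantum circle action given by \eqref{zact}. The coadjoint orbit property itself is already provided by Theorems \ref{unul} and \ref{doiu}. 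The main obstacle I anticipate is confirming that all the quotient constructions behave well in the Fr\'echet setting --- in particular, that the $\ZZ_a$-action on $\Gr_{\hor}^{M,\mu}(P)$ is free so that the quotient manifold structure exists --- but this is of the same nature as the smoothness verifications already carried out for the other nonlinear Grassmannians in the paper (see the appendix).
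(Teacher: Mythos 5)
Your proposal is correct and is essentially the paper's intended argument: the paper explicitly introduces the $\ZZ_a$-quotient lemma "to treat the general case $\int_M\mu=a\in\ZZ$," and the proof of Proposition \ref{integra} consists precisely of rerunning the construction of Proposition \ref{pp}, observing that the only volume-dependent step is $\al_0(E_{\Gr})=\int_M\mu=a$, and then invoking that lemma with total space $\Gr_{\hor}^{M,\mu}(P)$ over the coadjoint orbits supplied by Theorems \ref{unul} and \ref{doiu}.
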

\color{black}


\paragraph{Prequantization via cotangent reduction.}{In this paragraph we consider the special case of Lagrangian submanifolds and verify that the  prequantum bundle \eqref{berry} recovers the Berry bundle constructed in \cite{W90} by cotangent bundle reduction.} 
We are still assuming $H^1(M)=0$, so $T_L \Gr^M_{\Lag}(S)=\dd C^\oo(L)$. Using Lemma \ref{lem4} we can easily describe the tangent space to the space Planckian submanifolds of type $M$, see Lemma 4.1 in \cite{Weinstein}, denoted here by $\Gr^M_{\hor}(P)$. 

\begin{lemma}\label{lemlem}
The tangent space at $Q$ to the manifold of Planckian submanifolds of type $M$ is $T_Q\Gr^M_{\hor}(P)=C^\oo(L)$ for the Lagrangian manifold $L=p(Q)$.
\end{lemma}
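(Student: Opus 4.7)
The strategy is to exhibit an explicit surjection $C^\oo(S)\to T_Q\Gr_{\hor}^M(P)$ coming from the transitive infinitesimal action of $\X_{\quant}(P)$, and then identify its kernel with the functions vanishing on $L$. Since $C^\oo(S)\to C^\oo(L)$, $h\mapsto h|_L$, is surjective, this will yield the desired isomorphism $T_Q\Gr_{\hor}^M(P)\cong C^\oo(L)$.

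First, recall that $\Gr^M_{\hor}(P)=\Emb_{\hor}(M,P)/\Diff(M)$, so $T_Q\Gr^M_{\hor}(P)$ is realized as a subspace of $\Ga(TQ^\perp)=\Ga(TP|_Q)/\Ga(TQ)$. By Lemma \ref{lem4}, the Lie algebra $\X_{\quant}(P)=\{\xi_h:h\in C^\oo(S)\}$ acts transitively on $\Emb_{\hor}(M,P)$, and in the quotient by $\Diff(M)$ the corresponding infinitesimal action remains surjective onto $T_Q\Gr^M_{\hor}(P)$. Concretely, for $F\in\Emb_{\hor}(M,P)$ with $F(M)=Q$, every tangent vector at $Q$ is of the form $[\xi_h|_Q]=[X_h^{\hor}|_Q-(h\o p)|_Q E]$ for some $h\in C^\oo(S)$.

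Next, compute the kernel of the map $\Psi:C^\oo(S)\to T_Q\Gr^M_{\hor}(P)$, $\Psi(h)=[\xi_h|_Q]$. Using that $Q$ is horizontal, $TQ$ is contained in the horizontal subbundle of $TP$ along $Q$ and projects diffeomorphically onto $TL$ via $p$. Hence $\xi_h|_Q\in TQ$ if and only if its vertical component $-(h\o p)|_Q E$ vanishes and its horizontal component $X_h^{\hor}|_Q$ lifts a vector field tangent to $L$. The first condition is $h|_L=0$. The second is $X_h|_L\subset TL$, which, because $L$ is Lagrangian and $\ii_{X_h}\om=\dd h$, is equivalent to $\dd h|_{TL}=0$, i.e.\ $h|_L$ is locally constant on $L$. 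These two conditions together reduce to $h|_L=0$, and conversely this condition makes both vertical and horizontal contributions lie in $TQ$. Thus $\ker\Psi=\{h\in C^\oo(S):h|_L=0\}$, so $\Psi$ descends to an injection $C^\oo(L)\hookrightarrow T_Q\Gr^M_{\hor}(P)$, which is surjective by the transitivity argument above.

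The main subtlety to handle carefully is the interplay between the horizontal and vertical components of $\xi_h|_Q$ modulo $TQ$: one must verify that it is precisely the Lagrangian hypothesis on $L$ that forces the horizontal datum $\dd h|_L$ to be determined by the vertical datum $h|_L$, thus collapsing the a priori two-component description (which for a general isotropic $Q$ would contribute an additional $\Omega^1(L)/\dd C^\oo(L)$ summand) to the single function $h|_L\in C^\oo(L)$. Otherwise, the computation is the natural analogue of the identification \eqref{lagri} lifted to $P$.
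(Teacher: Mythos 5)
Your proof is correct and follows essentially the same route as the paper's: surjectivity of $h\mapsto[\xi_h|_Q]$ comes from the transitivity in Lemma \ref{lem4}, and the kernel is identified as $\{h\in C^\oo(S):h|_L=0\}$ by splitting $\xi_h|_Q$ into its vertical part (controlled by $\alpha$, giving $h|_L=0$) and horizontal part (tangent to $Q$ precisely because $TL^\om=TL$ forces $X_h|_L\in\Ga(TL)$ when $\dd h|_{TL}=0$). The paper merely writes the same isomorphism in the opposite direction, $[\xi_h|_Q]\mapsto h|_L$, so no further comment is needed.
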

\color{black}
\begin{proof}
The linear surjective map 
\begin{equation}\label{isom}
T_Q\Gr^M_{\hor}(P)\to C^\oo(L),\quad[\xi_h|_Q]\mapsto h|_L
\end{equation}
is well defined, where $\xi_h=X_h^{\hor}-(h\o p)E$. 
Indeed, the condition $[\xi_h|_Q]=0$ for some $h\in C^\oo(S)$
implies that $\xi_h|_Q$ is tangent to the Planckian manifold $Q$. Since the connection form $\al$ vanishes on $TQ$, 
we have that $0=\al(\xi_h|_Q)=(h\o p)|_Q$ and consequently $h|_L=0$.
The map \eqref{isom} is also injective: whenever $h|_L=0$, the Hamiltonian vector field $X_h|_L\in \Ga(TL^\om)=\Ga(TL)$ is tangent to the Lagrangian manifold $L$, so $\xi_h|_Q=X_h^{\hor}|_Q$  is tangent to the Planckian manifold $Q$.  We thus get the desired isomorphism \eqref{isom}.
\end{proof}
\color{black}
\medskip

The circle action on the cotangent bundle $T^*\Gr^M_{\hor}(P)$ 
with canonical symplectic form $\Om=\dd\Th$ is Hamiltonian with momentum map
\[
J:T^*\Gr^M_{\hor}(P)\to\RR,\quad J(Q,\nu_{L})=\int_{L}\nu_{L}, \quad L=p(Q),
\]
with $\nu_L\in \Den(L)=T^*_Q\Gr^M_{\hor}(P)$ (by Lemma \ref{lemlem}). 
Let $\mu$ be a volume form on $M$ with total volume $\int_M\mu=1$.
The preimage $J^{-1}(1)\subset T^*\Gr^M_{\hor}(P)$ can be identified with
the nonlinear Grassmannian of weighted Planckian submanifolds
\begin{equation}\label{j11}
J^{-1}(1)=\left\{(Q,\nu_L):\int_L\nu_L=\int_M\mu\right\}
\cong\Gr_{\hor}^{M,\mu}(P),
\end{equation}
since volume forms on $Q$ are in bijection with volume forms on $L=p(Q)$.

The symplectically reduced space at $1$ is the nonlinear Grassmannian of weighted Lagrangian submanifolds
\[
J^{-1}(1)/{S^1}=\Gr_{\hor}^{M,\mu}(P)/{S^1}=\Gr_{\Lag}^{M,\mu}(S),
\]
endowed with reduced symplectic form $\Om_1$ that
descends from the restriction to $J^{-1}(1)$ of
the canonical cotangent bundle symplectic form $\Om=\dd\Th$.
Thus we also have a prequantum bundle
\begin{equation}\label{ctg}
(J^{-1}(1),\Th)\to (\Gr_{\Lag}^{M,\mu}(S),\Om_1).
\end{equation}

\begin{proposition}
The prequantum bundles \eqref{berry} and \eqref{ctg} coincide
\textcolor{black}{for $\dim M=\frac12\dim S$.}
\end{proposition}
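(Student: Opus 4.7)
The plan is to match the two prequantum bundles piece by piece: base, total space, principal action, connection 1-form, and finally the curvature/symplectic form. Since $\dim M=\frac12\dim S$, every isotropic submanifold of type $M$ is Lagrangian, so the base spaces agree: $\Gr_{\iso}^{M,\mu}(S)=\Gr_{\Lag}^{M,\mu}(S)$. The identification of total spaces is already given by \eqref{j11}: a weighted Planckian submanifold $(Q,\nu_Q)\in\Gr_{\hor}^{M,\mu}(P)$ corresponds to $(Q,\nu_L)\in J^{-1}(1)$, where $\nu_L:=(p|_Q)_*\nu_Q\in\Den(L)$ and $L=p(Q)$; this is well defined because $p|_Q$ is a diffeomorphism. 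Under this identification the lifted $S^1$-action on $T^*\Gr_{\hor}^M(P)$ (covering the translation $Q\mapsto\rho_z(Q)$) agrees with the principal $S^1$-action on $\Gr_{\hor}^{M,\mu}(P)$.

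Next I would identify the connection 1-forms. Let $\Psi:\Gr_{\hor}^{M,\mu}(P)\to J^{-1}(1)$, $\Psi(Q,\nu_Q)=(Q,\nu_L)$ denote the bijection. Using Lemma \ref{lemlem}, a tangent vector at $Q$ in $\Gr_{\hor}^M(P)$ has the form $[\xi_h|_Q]$ and is identified with $h|_L\in C^\oo(L)$; dually, $T^*_Q\Gr_{\hor}^M(P)\cong\Den(L)$ via the integration pairing. For any tangent vector $W$ to $J^{-1}(1)$ at $(Q,\nu_L)$ projecting to $[\xi_h|_Q]$, the canonical 1-form evaluates as
\[
\Theta_{(Q,\nu_L)}(W)=\langle\nu_L,h|_L\rangle=\int_L h|_L\,\nu_L.
\]

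On the other hand, pulling back $\al_0$ to $\Emb_{\hor}(M,P)$ via $\pi^\mu$ yields $\bar\al$ from \eqref{alfa}. For a horizontal embedding $F$ with $f=p\o F$ and a generator $\xi_h\o F$, I use $\al(\xi_h)=\al(X_h^{\hor}-(h\o p)E)=-h\o p$ to compute
\[
\bar\al(\xi_h\o F)=\int_M\al(\xi_h\o F)\mu=-\int_M(h\o f)\mu=-\int_L h|_L\,\nu_L,
\]
since $\nu_L$ corresponds to $f_*\mu$ under the identification in \eqref{j11}. Thus $\Psi^*\Theta=-\al_0$. The sign is merely a convention: either one takes the canonical 1-form on $T^*\Gr_{\hor}^M(P)$ with the opposite sign, or one reverses the orientation of the circle fiber; both give the same prequantum bundle structure.

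The remaining check is that the induced base symplectic forms agree. Taking exterior derivatives of $\Psi^*\Theta=-\al_0$ and comparing with the respective curvature identities $\mathbf{d}\al_0=\bar p^*\om_0$ on $\Gr_{\hor}^{M,\mu}(P)$ and $\mathbf{d}\Theta|_{J^{-1}(1)}$ descending to $\Om_1$ on $\Gr_{\Lag}^{M,\mu}(S)$, we obtain $\om_0=\Om_1$ (up to the same sign convention). The only subtle point, and the main obstacle I anticipate, is keeping track of the sign conventions for $\Theta$ and for the $S^1$-action consistently throughout; once the conventions are fixed, $\Psi$ is a bundle isomorphism intertwining both the principal $S^1$-actions and the connection forms, so the two prequantum bundles coincide.
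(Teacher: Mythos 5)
Your proposal is correct and follows essentially the same route as the paper: identify the total spaces via \eqref{j11}, use Lemma \ref{lemlem} to identify tangent vectors $[\xi_h|_Q]$ with functions $h|_L$, and then check that $\bar\al$ evaluated on the generators $\xi_h\circ F$ matches the canonical one-form $\Th$ paired against $\nu_L$. Your extra bookkeeping of the sign is in fact more careful than the paper's own computation, which writes $\int_M(\al(\xi_h)\circ F)\mu=\int_M(h\circ f)\mu$ even though the stated convention $\xi_h=X_h^{\hor}-(h\circ p)E$ gives $\al(\xi_h)=-(h\circ p)$; as you say, this is absorbed into the sign conventions for $\Th$, $J$ and the circle action, and does not affect the conclusion.
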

\begin{proof}
It suffices to show that the 1-form $\al_0$ on $\Gr_{\hor}^{M,\mu}(P)$, 
naturally induced by the connection $\al\in\Om^1(P)$, 
as in Proposition \ref{pp}, corresponds under \eqref{j11} to the restriction of the canonical 1-form 
$\Th$ on the cotangent bundle $T^*\Gr^M_{\hor}(P)$ 
to the level set $J^{-1}(1)$.

In the computation we make use of the transitivity of the $\X_{\quant}(P)$-action 
on $\Emb_{\hor}(M,P)$, hence on $\Gr_{\hor}^{M}(P)$, but also on $\Gr_{\hor}^{M,\mu}(P)\cong J^{-1}(1)$.
By Lemma \ref{lemlem}, the infinitesimal generator of $\xi_h$ at $Q\in\Gr_{\hor}^{M}(P)$
can be identified with the restriction of $h\in C^\oo(S)$ to $L=p(Q)$. For all
$F\in \Emb_{\hor}(M,P)$ with $f=p\o F$, $F(M)=Q$, $f_*\mu=\nu_L$, we get
\begin{align*}
(\al_0)_{(Q,\nu_L)} \big( (\xi_h)_{\Gr_{\hor}^{M,\mu}} \big) &=
\bar\al_F(\xi_h\o F)=\int_M(\al(\xi_h)\o F)\mu=\int_M(h\o f)\mu
=\int_Lh|_L\nu_L\\
&=(\nu_L,(\xi_h)_{\Gr_{\hor}^{M}}(Q))=\Th_{(Q,\nu_L)}\big((\xi_h)_{T^*\Gr_{\hor}^{M}}\big),
\end{align*}
where we denote by $(\xi_h)_{\Gr_{\hor}^{M,\mu}}$ and $(\xi_h)_{T^*\Gr_{\hor}^{M}}$ the infinitesimal generators for the 
$\X_{\quant}(P)$-action on $\Gr_{\hor}^{M,\mu}(P)$ and on
$T^*\Gr_{\hor}^{M}(P)$.
\end{proof}

\medskip

As a consequence
the explicit formula for the connection 1-form $\Th$ on $J^{-1}(1)$ in terms of the tangent space decomposition
$T_{(Q,\nu_L)}J^{-1}(1)=C^\oo(L)\x \dd\Om^{m-1}(L)$ is simply
\[
\Th_{(Q,\nu_L)}(h,\dd\la)=\int_Lh\nu_L,
\]
while the formula for the curvature 2-form $\Om_1$ on $\Gr_{\Lag}^{M,\mu}(S)$
 in terms of the tangent space decomposition
$T_{(L,\nu_L)}\Gr_{\Lag}^{M,\mu}(S)=\dd C^\oo(L)\x \dd\Om^{m-1}(L)$
is the same as  \eqref{patru}:
\[
(\Om_1)_{(L,\nu_L)}((\dd h_1,d\la_1),(\dd h_2,\dd\la_2))=\int_L(\dd h_1\wedge\la_2-\dd h_2\wedge\la_1).
\]



\section{Exact isotropic submanifolds}\label{s8}

\paragraph{Dual pair.} 
In the special case when the symplectic form $ \omega $ on $S$ is exact, 
i.e. $ \omega = - \mathbf{d} \theta $, 
an ideal fluid dual pair exists on $\Emb(M,S)$ without the extra condition $H^1(M)=0$. 
The left acting group is still $ H=\Diff_{\quant}(P)_0$ with momentum map $\J_L(f)=f_*\mu$.  
In this case the prequantization central extension 
$\Diff_{\quant}(P)_0$ is defined 
with the group cocycle $B( \eta _1 , \eta _2 )= \int_{x_0 }^{ \eta _2 (x_0 )}( \eta ^\ast _1 \theta  - \theta  )$, see \cite{IsLoMi2006}. The cohomology class of this cocycle is independent of the chosen point $x_0 \in S$. 
For the right action on $\Emb(M,S)$  it is not needed to restrict to the subgroup $ \operatorname{Diff}_{\rm ex}(M)$ and to consider the Ismagilov central extension.
One  takes $G=\operatorname{Diff}_{\rm vol}(M)$
and the momentum map is $ \mathbf{J} _R^{\rm ex} (f)=[f ^\ast\theta ]$.
The pair of momentum maps reads
\begin{equation}\label{MW_GBVi_dual_pair_exact} 
\Den_c(S)=C^\infty(S)^\ast \stackrel{\J_L}{\longleftarrow} \operatorname{Emb}(M,S)\stackrel{\J_R^{\rm ex}}{\longrightarrow} \mathfrak{X}_{\rm vol}(M) ^\ast = \Omega ^1 (M)/ \mathbf{d} \Omega ^0 (M).
\end{equation}

\begin{lemma}[\cite{GBVi2014}]\label{lem3} The group $\Diff_{\ham}(S)$ acts transitively on each of the connected components of the level sets of the momentum map $\J^{\ex}_R$
for the $\Diff_{\vol}(M)$-action on $\Emb(M,S)$.
\end{lemma}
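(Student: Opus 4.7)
The plan is to mirror the proof of Lemma \ref{lem2}, with one substantive modification: the exactness of $f^*\mathbf{i}_v\om$ on $M$, which previously required $H^1(M)=0$, will now follow directly from the fact that $v$ lies in the kernel of $T_f\J_R^{\ex}$. So the hypothesis $H^1(M)=0$ becomes unnecessary in the exact setting. First I would establish infinitesimal transitivity of the $\X_{\ham}(S)$-action on level sets of $\J_R^{\ex}$, and then invoke smooth parametric dependence (as in Proposition 2 of \cite{Haller-Vizman}) to upgrade this to transitivity of the connected group $\Diff_{\ham}(S)$ on the connected components of the level sets.

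For the infinitesimal transitivity, I would take $v\in\X(S)$ with $T_f\J_R^{\ex}(v\o f)=0$, i.e.\ $f^*\pounds_v\th=\dd k$ for some $k\in C^\oo(M)$. Using Cartan's formula together with $\dd\th=-\om$, this becomes
\[
\dd\big(f^*\mathbf{i}_v\th - k\big) = f^*\mathbf{i}_v\om,
\]
so $f^*\mathbf{i}_v\om$ is an exact $1$-form on $M$. Since $f$ is an embedding, there exists $h_1\in C^\oo(S)$ with $f^*\mathbf{i}_v\om=\dd(h_1\o f)$. The section $\be_f:=\mathbf{i}_{v\o f}\om-\dd h_1\o f\in\Ga(f^*T^*S)$ then vanishes on vectors tangent to $f(M)$, and Lemma 4.2 in \cite{GBVi2011} produces $h_2\in C^\oo(S)$ with $\be_f=\dd h_2\o f$. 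Setting $h=h_1+h_2$ yields $v\o f=X_h\o f$, which is the required infinitesimal transitivity.

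For the passage from infinitesimal to local transitivity on connected components, the argument is identical to the one at the end of the proof of Lemma \ref{lem2}: each of the preceding constructions ($h_1$, the application of Lemma 4.2 in \cite{GBVi2011} yielding $h_2$) depends smoothly on the embedding $f$, so one obtains a family of Hamiltonian flows that integrate an arbitrary tangent curve in a level set, giving transitivity on its connected components.

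The only genuinely new content compared with Lemma \ref{lem2} is the opening Cartan-formula computation, which upgrades the closedness of $f^*\mathbf{i}_v\om$ (previously obtained via $H^1(M)=0$) to outright exactness directly from the kernel condition on $\J_R^{\ex}$; I do not anticipate any real obstacle, since the remainder of the argument is a verbatim rerun of the earlier one.
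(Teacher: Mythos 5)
Your proof is correct, and it is exactly the natural adaptation of the paper's proof of Lemma \ref{lem2}: the paper itself omits the argument for Lemma \ref{lem3}, deferring to \cite{GBVi2014}. Your key observation — that the kernel condition $f^*\pounds_v\th=\dd k$ together with Cartan's formula and $\dd\th=-\om$ yields $f^*\mathbf{i}_v\om=\dd(f^*\mathbf{i}_v\th-k)$, so exactness of $f^*\mathbf{i}_v\om$ comes for free and the hypothesis $H^1(M)=0$ is not needed — is precisely the point of the exact setting, and the remainder (extension to $h_1\in C^\oo(S)$, Lemma 4.2 of \cite{GBVi2011} for $h_2$, and the smooth parametric dependence upgrading infinitesimal to componentwise transitivity) carries over verbatim.
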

Since the left action of $H=\Diff_{\quant}(P)_0 $ is given by composition on the left by $ \operatorname{Diff}_{\rm ham}(S)$, from Lemma \ref{lem1} and Lemma \ref{lem3} we obtain that the actions of $\Diff_{\rm vol}(M)$ and $ \Diff_{\quant}(P)_0 $ on $ \operatorname{Emb}(M,S)$ are mutually completely orthogonal. We thus obtain the following result.

\begin{proposition}[\cite{GBVi2014}]\label{dual_pair_exact} The pair of momentum maps \eqref{MW_GBVi_dual_pair_exact} is a dual pair associated to mutually completely orthogonal actions. 
\end{proposition}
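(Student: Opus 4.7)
The plan is to verify that the commuting actions of $H=\Diff_{\quant}(P)_0$ (which factors through $\Diff_{\ham}(S)$) and $G=\Diff_{\vol}(M)$ on $\Emb(M,S)$ are mutually completely orthogonal in the sense of \eqref{mutcomort}. Since the central $S^1$ in $\Diff_{\quant}(P)_0$ acts trivially on $\Emb(M,S)$, its infinitesimal generators agree with those of $\Diff_{\ham}(S)$, and what needs to be established is the pair of identities
\[
\X_{\vol}(M)_{\Emb}=\X_{\ham}(S)_{\Emb}^{\,\bar\om}\quad\text{and}\quad\X_{\ham}(S)_{\Emb}=\X_{\vol}(M)_{\Emb}^{\,\bar\om}
\]
at every $f\in\Emb(M,S)$. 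In the infinite-dimensional weak-symplectic setting these two equalities together yield both symplectic orthogonality conditions required of a dual pair, and the commutativity of the two actions is automatic since one acts by left and the other by right composition. Hence the whole proposition reduces to these two orthogonality identities.

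Each identity is obtained by matching two descriptions of the same kernel. The momentum-map defining relation $\mathbf{d}\langle\J_L,X_h\rangle=\ii_{(X_h)_{\Emb}}\bar\om$ immediately gives $\ker T_f\J_L=\X_{\ham}(S)_{\Emb}(f)^{\,\bar\om}$, and the analogous computation for $\J_R^{\ex}$ gives $\ker T_f\J_R^{\ex}=\X_{\vol}(M)_{\Emb}(f)^{\,\bar\om}$. On the other hand, Lemma \ref{lem1} tells us that the $\Diff_{\vol}(M)$-orbit of $f$ is the entire level set $\J_L^{-1}(\J_L(f))$, so the two share a common tangent space at $f$ and thus $\X_{\vol}(M)_{\Emb}(f)=\ker T_f\J_L$. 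Lemma \ref{lem3} gives the parallel statement for $\Diff_{\ham}(S)$ on the connected components of the level sets of $\J_R^{\ex}$, yielding $\X_{\ham}(S)_{\Emb}(f)=\ker T_f\J_R^{\ex}$. Chaining these four identities produces the two desired orthogonality equations, and therefore the dual pair property.

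The main obstacle is concentrated in Lemma \ref{lem3}, the exact-case analogue of Lemma \ref{lem2}, whose novelty is that the cohomological hypothesis $H^1(M)=0$ is no longer required. That hypothesis entered the proof of Lemma \ref{lem2} to ensure that the closed one-form $f^*\ii_v\om$ on $M$ is exact whenever $f^*\pounds_v\om=0$. In the exact case, the Cartan identity $\pounds_v\theta=\mathbf{d}\ii_v\theta-\ii_v\om$ combined with the kernel hypothesis $f^*\pounds_v\theta\in\mathbf{d}\Om^0(M)$ directly forces $f^*\ii_v\om$ to be exact on $M$, regardless of $H^1(M)$, after which the remainder of the argument of Lemma \ref{lem2} goes through unchanged. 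Granted Lemma \ref{lem3}, the structural argument above is essentially a transcription of the reasoning already used in Section \ref{s5}.
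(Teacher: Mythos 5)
Your argument is correct and follows the paper's own route: mutual complete orthogonality is deduced from Lemma \ref{lem1} and Lemma \ref{lem3} via the general principle of Section \ref{s4} that transitivity on (connected components of) level sets of the momentum maps yields the identities \eqref{mutcomort}, from which the dual pair property follows. The only difference is that you also supply a sketch of Lemma \ref{lem3}, which the paper merely cites from the literature; your Cartan-formula observation that $f^*\mathbf{i}_v\om=\mathbf{d}(f^*\mathbf{i}_v\th)-f^*\pounds_v\th$ is exact whenever $[f^*\pounds_v\th]=0$ is exactly the right way to remove the hypothesis $H^1(M)=0$ and reduce to the argument of Lemma \ref{lem2}.
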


\paragraph{Exact isotropic embeddings.}
The zero level set of the momentum map $\J_R^{\ex}$
is the manifold of \textit{exact isotropic embeddings}
\[
(\J_R^{\rm ex})^{-1}(0)=\{f\in\Emb(M,S): [f^*\theta ]=0\}=:\Emb_{\iso,\ex}(M,S)
\]
which consists of embeddings $f$ such that $f^*\theta$ is an exact 1-form. 
The image $f(M)$ is an {\it exact isotropic submanifold} of $S$ of type $M$.
The nonlinear Grassmannian $\Gr_{\iso,\ex}^M(S)$ of exact isotropic submanifolds of type $M$
is a submanifold of $\Gr_{\iso}^M(S)$ and 
the base of a principal $\Diff(M)$-bundle $\Emb_{\iso,\ex}(M,S)\to\Gr_{\iso,\ex}^M(S)$.
Using the dual pair property of \eqref{MW_GBVi_dual_pair_exact}, one verifies that the tangent space to $\Gr_{\iso,\ex}^M(S)$ has the expression \eqref{ting}. The notions of isotropic and exact isotropic embeddings coincide
if $H^1(M)=0$.

The reduced symplectic manifold at zero for the $\Diff_{\vol}(M)$-action
can be identified with the manifold 
of exact isotropic volume submanifolds of type $(M,\mu)$:
$$(\J_R^{\ex})^{-1}(0)/\Diff_{\vol}(M)=\Emb_{\iso,\ex}(M,S)/\Diff_{\vol}(M)=\Gr_{\iso,\ex}^{M,\mu}(S).$$

\paragraph{Coadjoint orbits.} We use again
the link between symplectic reduction in dual pairs of momentum maps
and coadjoint orbits to obtain coadjoint orbits of the quantomorphism group
from  the dual pair  \eqref{MW_GBVi_dual_pair_exact}.

\color{black}
\begin{theorem}\label{tre}
Let $(S,-\dd\th)$ be an exact symplectic manifold.
Each connected component $\Gr_{\iso,\ex}^{M,\mu}(S)_0$ of  $\Gr_{\iso,\ex}^{M,\mu}(S)$
is symplectically diffeomorphic to a coadjoint orbit of $\Diff_{\quant}(P)_0 $ 
with $P=S\x S^1$ and $\al=\dd t-p^*\th$.
The diffeomorphism is given by
\[
\bar \J_L: \Gr_{\iso,\ex}^{M,\mu}(S)_0 \rightarrow \mathcal{O} \subset \operatorname{Den}_c(S), \quad
{ \langle \bar\J_L(N, \nu ),h\rangle=\int_N h|_N\nu.}
\]
The orbit symplectic form on $\Gr_{\iso,\ex}^{M,\mu}(S)$ has the same expression \eqref{trei} as in Theorem \ref{formula_KKS}.
\end{theorem}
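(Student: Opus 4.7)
The plan is to mirror the proof of Theorem \ref{unul}, but now apply Proposition \ref{id1} (rather than Proposition \ref{id2}) to the exact symplectic dual pair \eqref{MW_GBVi_dual_pair_exact}, since in this setting $G=\Diff_{\vol}(M)$ already admits the equivariant momentum map $\J_R^{\ex}$ and no Ismagilov central extension is needed. By Proposition \ref{dual_pair_exact}, the commuting actions of $H=\Diff_{\quant}(P)_0$ and $G=\Diff_{\vol}(M)$ on $\Emb(M,S)$ are mutually completely orthogonal, and both momentum maps $\J_L$ and $\J_R^{\ex}$ are equivariant. Lemma \ref{lem1} gives transitivity of the $G$-action on the level sets of $\J_L$, while Lemma \ref{lem3} gives transitivity of the connected $H$-action on connected components of level sets of $\J_R^{\ex}$. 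All hypotheses of Proposition \ref{id1} are therefore satisfied at $\sigma=0\in\Omega^1(M)/\mathbf{d}\Omega^0(M)$.

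Next, I would identify the reduced symplectic manifold $(\J_R^{\ex})^{-1}(0)/G_0$ at the zero momentum. Since the stabilizer of $0$ is all of $G=\Diff_{\vol}(M)$, the reduced space is exactly $\Gr_{\iso,\ex}^{M,\mu}(S)$, as noted just before the theorem. Applying Proposition \ref{id1}, each connected component $\Gr_{\iso,\ex}^{M,\mu}(S)_0$ is symplectically diffeomorphic to a coadjoint orbit of $H=\Diff_{\quant}(P)_0$ via the descended momentum map $\bar\J_L$ of formula \eqref{jele}. The explicit form of $\bar\J_L$ follows exactly as in the proof of Theorem \ref{unul}: for any $f\in\Emb_{\iso,\ex}(M,S)$ with $f(M)=N$ and $f_*\mu=\nu$, and any $h\in C^\oo(S)$,
\[
\langle\bar\J_L(N,\nu),h\rangle=\langle\J_L(f),h\rangle=\int_M(h\o f)\mu=\int_N h|_N\nu,
\]
giving the stated formula.

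For the orbit symplectic form, I would show that the computation of Theorem \ref{formula_KKS} goes through verbatim in the exact isotropic setting. The key point is that the tangent space description \eqref{ting} of $T_N\Gr_{\iso}^M(S)$ carries over to $T_N\Gr_{\iso,\ex}^M(S)$, as already observed before the theorem statement using the dual pair property of \eqref{MW_GBVi_dual_pair_exact}. The differential of $\pi^\mu$ has the same form \eqref{tunu}, and the general reduction formula \eqref{omega_0} combined with the explicit expression of $\bar\om$ from \eqref{crc} and the isotropy of $N$ reproduces formula \eqref{trei} without modification.

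The main subtlety to check carefully is the transitivity input from Lemma \ref{lem3} (which replaces Lemma \ref{lem2} used for $H^1(M)=0$): this is precisely what allows us to drop the cohomological hypothesis $H^1(M)=0$ on $M$ and handle submanifolds with nontrivial first cohomology, provided that $f^*\theta$ remains exact. This is the essential gain over Theorem \ref{unul} and should be flagged explicitly in the argument. Apart from that, no new calculation is required: the proof reduces to invoking Proposition \ref{id1} with the verified hypotheses, together with the already-established formula \eqref{trei} for the reduced symplectic form.
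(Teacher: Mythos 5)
Your proposal is correct and follows essentially the same route as the paper: invoking Proposition \ref{id1} for the exact dual pair \eqref{MW_GBVi_dual_pair_exact}, with mutual orthogonality from Proposition \ref{dual_pair_exact} and the transitivity inputs from Lemmas \ref{lem1} and \ref{lem3}. Your additional remarks on the identification of the reduced space, the formula for $\bar\J_L$, and the persistence of formula \eqref{trei} are consistent with the surrounding discussion in the paper and fill in details the paper leaves implicit.
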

\color{black}
\begin{proof}  The result follows from Proposition \ref{id1} applied to the ideal fluid dual pair \eqref{MW_GBVi_dual_pair_exact}. Indeed, from Theorem \ref{dual_pair_exact}, we know that the actions are completely mutually orthogonal. From Lemma \ref{lem1}, the action of $G= \operatorname{Diff}_{\rm vol}(M)$ is transitive on the level sets of $ \mathbf{J} _L$. Finally, from Lemma \ref{lem3}, the action of $H=\Diff_{\quant}(P)_0$ (connected) is transitive on each connected component of the level sets of $ \mathbf{J} _R^{\rm ex}$. 
\end{proof}

\medskip

{This result is relevant only in the case $H ^1 (M)\neq 0$, since if $H ^1 (M)=0$ it is a particular instance of previous results.}
 
\paragraph{Exact Lagrangian submanifolds.}
Let $M$ be compact with $\dim M=\frac12\dim S$ and no condition on $H^1(M)$.
Let $L$ be an exact Lagrangian submanifold of $(S,-\dd\th)$ of type $M$.
Under the identification $T_L\Gr^M(S)=\Om^1(L)$,
the tangent spaces to the nonlinear Grassmannian of
(exact) Lagrangian submanifolds become
$T_L\Gr^M_{\Lag}(S)=Z^1(L)$ and $T_L\Gr^M_{\Lag,\ex}(S)=\dd C^\oo(L)$.

As in example 5.32 in \cite{MDSa98} (see also  \cite{Donaldson} and \cite{Hitchin})
we obtain the space $\Emb_{\Lag,\ex}(M,S)$ 
of exact Lagrangian embeddings as the zero set
of the momentum map $ \mathbf{J} _R ^{\rm ex}$.
The reduced symplectic manifold can be identified with the manifold $\Gr_{\Lag,\ex}^{M,\mu}(S)$
of exact Lagrangian volume submanifolds of type $(M,\mu)$ in $S$.
Theorem \ref{tre} ensures that each connected component of this manifold
is a coadjoint orbit of the quantomorphism group.

The basic example is the cotangent bundle $S=T^*M$ with canonical symplectic form, where the image of an exact 1-form $\mathbf{d}h$, $h\in C^\oo(M)$, viewed as a section of $T^*M$,
is an exact Lagrangian submanifold of $T^*M$.

\paragraph{Prequantization.}
Recall that the prequantum bundle over $(S,-\dd\th)$ is $P=S \times  S^1  $ with connection 1-form  $ \alpha = dt- p ^\ast \theta $. Given $F \in \operatorname{Emb}(M,P)$, we shall use the notation $F=(f,g)$, $f\in C^\infty(M,S)$, $g \in C^\infty(M,  S^1  )$.
We define the manifold of \textit{exact horizontal embeddings}
\[
\operatorname{Emb}_{\rm hor, ex}(M,P)=\{(f,g) : f ^\ast \theta = g ^{-1} \mathbf{d} g\;\; \in \mathbf{d} C^\infty(M) \}.
\]
Note that the condition $ f ^\ast \theta = g ^{-1} \mathbf{d} g$  is equivalent to $ F ^\ast \alpha =0$.

\begin{lemma}
The circle action on $ \Emb_{\hor, \ex}(M,P)$
induced by the principal circle action on $P$ leads to the principal circle bundle
\[
p_*: \Emb_{\hor, \ex}(M,P)\to\Emb_{\iso, \ex}(M,S), \quad p_*(F)=p\o F.
\]
\end{lemma}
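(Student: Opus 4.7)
The argument parallels Lemma \ref{lemma_emb_hor}, with exactness of $f^*\theta$ taking over the role of the hypothesis $H^1(M)=0$. Write $F=(f,g)\in\Emb(M,S\times S^1)$, so that $F^*\alpha = g^{-1}\mathbf{d}g - f^*\theta$; hence $F\in\Emb_{\hor,\ex}(M,P)$ is equivalent to $F$ being an embedding with $g^{-1}\mathbf{d}g=f^*\theta\in\mathbf{d}C^\infty(M)$.

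First, I would verify that $p_*$ is well-defined, i.e., that $f=p\circ F$ lies in $\Emb_{\iso,\ex}(M,S)$. Isotropy is immediate from $f^*\omega=-\mathbf{d}(f^*\theta)=0$, and exactness of $f^*\theta$ is built into the definition. The nontrivial point is that $f$ itself is an embedding. For this, the connection $\alpha$ restricts to a flat connection on the circle bundle $p^{-1}(f(M))\to f(M)$, with holonomy along any loop $\gamma\subset f(M)$ given by $\oint_\gamma\theta|_{f(M)}$, and this vanishes because $f^*\theta$ is exact on $M$. The flat bundle $p^{-1}(f(M))$ is therefore trivial and foliated by horizontal leaves each projecting diffeomorphically to $f(M)$. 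Since $F(M)$ is a connected horizontal submanifold of the same dimension, it is contained in a single such leaf, so $p|_{F(M)}$ is a diffeomorphism onto $f(M)$, making $f$ an embedding.

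For surjectivity and the $S^1$-bundle structure, I would argue as follows. Given $f\in\Emb_{\iso,\ex}(M,S)$, pick a primitive $h\in C^\infty(M,\RR)$ of $f^*\theta$ and set $g:M\to S^1$ to be the composition of $h$ with the quotient $\RR\to S^1$. Then $g^{-1}\mathbf{d}g=\mathbf{d}h=f^*\theta$, so $F:=(f,g)$ is an exact horizontal lift of $f$. If $F_i=(f,g_i)$, $i=1,2$, are two such lifts, then $(g_2g_1^{-1})^{-1}\mathbf{d}(g_2g_1^{-1})=g_2^{-1}\mathbf{d}g_2-g_1^{-1}\mathbf{d}g_1=0$, so on a connected $M$ we obtain $g_2=zg_1$ for some $z\in S^1$ and $F_2=\rho_z\circ F_1$. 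Fixing a normalization of $h$ (for instance $h(x_0)=0$ at a chosen basepoint) renders the assignment $f\mapsto F(f)$ smooth on a Fr\'echet neighborhood of any reference embedding, and $(f,z)\mapsto\rho_z\circ F(f)$ then yields a local trivialization.

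The main subtlety is the embedding claim for $f$ in the first step: where Lemma \ref{lemma_emb_hor} uses $H_1(f(M))=0$ to trivialize the holonomy of the restricted connection, here the exactness of $f^*\theta$ supplies this vanishing directly. The remainder of the proof is a routine adaptation.
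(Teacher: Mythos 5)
Your proof is correct and takes essentially the same route as the paper's: the paper likewise defers everything except surjectivity to Lemma \ref{lemma_emb_hor} and establishes surjectivity by the identical lift $F=(f,e^{\mathrm{i}h})$ with $h\in C^\infty(M)$ a primitive of $f^*\theta$, with the fiber and triviality arguments carried over from the non-exact case. The only material you add is the argument that $f=p\circ F$ is itself an embedding (a point the paper simply asserts already in Lemma \ref{lemma_emb_hor}); note that this sub-argument is mildly circular, since invoking the flat bundle $p^{-1}(f(M))\to f(M)$ and its holonomy along loops in $f(M)$ already presupposes that $f(M)$ is an embedded submanifold, i.e.\ that $f$ is injective.
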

\begin{proof} The proof goes as for Lemma \ref{lemma_emb_hor}, except for the surjectivity of $p _\ast $ that we now show.
Let $f\in \operatorname{Emb}_{\iso, \ex}(M,S)$ so $ f ^\ast \theta = \mathbf{d} h$, where $h \in C^\infty(M)$. We define $g:=e^{ {\rm i} h} \in C^\infty(M,  S^1  )$. Because $g^{-1} \mathbf{d} g= \mathbf{d} h$, the embedding $F:=(f,g)\in \operatorname{Emb}_{\rm hor, \rm ex}(M,P)$ descends to $f$.
\end{proof}

\medskip

$\Emb_{\hor, \ex}(M,P)$ factorized by the $\Diff_{\vol}(M)$-action
determines the manifold of \textit{exact horizontal volume submanifolds of $P$ of type $(M,\mu)$},
$$
\Gr_{\hor, \ex}^{M,\mu}(P):=\Emb_{\hor, \ex}(M,P)/\Diff_{\vol}(M).
$$
We have a similar result to Proposition \ref{integra}.

\color{black}
\begin{proposition}\label{prequant_exact} 
Let $(S,- \mathbf{d} \theta )$ be an exact symplectic manifold, 
and let $(M,\mu)$ a compact manifold with total volume $a\in\ZZ$.
Then the connected components of  $\Gr_{\iso, \ex}^{M,\mu}(S)$
are prequantizable coadjoint orbits of $\Diff_{\quant}(P)_0$.
The prequantum bundle is $\Gr_{\hor,\ex}^{M,\mu}(P)/{\ZZ_a}$.
\end{proposition}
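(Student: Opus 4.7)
The plan is to mirror the proofs of Proposition \ref{pp} and Proposition \ref{integra}, with the ``exact'' qualifier threaded through at each step, and then to invoke the last displayed lemma (about circle bundles whose connection pairs to $a \in \ZZ$ against the fundamental vector field) exactly as in Proposition \ref{integra}.

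First, by Theorem \ref{tre}, each connected component of $\Gr_{\iso,\ex}^{M,\mu}(S)$ is symplectically diffeomorphic to a coadjoint orbit of $\Diff_{\quant}(P)_0$ via $\bar\J_L$, so we only need to construct a prequantum bundle. The candidate is
\[
\bar p:\Gr_{\hor,\ex}^{M,\mu}(P)\to \Gr_{\iso,\ex}^{M,\mu}(S),\qquad \bar p(Q,\nu)=(p(Q),p_*\nu),
\]
together with a suitable 1-form. The circle action on $\Emb_{\hor,\ex}(M,P)$ inherited from $P$ commutes with the $\Diff_{\vol}(M)$-action, so it descends to a circle action on $\Gr_{\hor,\ex}^{M,\mu}(P)$ whose quotient is $\Gr_{\iso,\ex}^{M,\mu}(S)$; smoothness of $\bar p$ as a principal circle bundle follows from the previously established lemma giving the principal $S^1$-bundle structure of $p_*$ together with standard Fr\'echet slice arguments as in the appendix.

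Next, I would introduce the same 1-form $\bar\al \in \Om^1(\Emb_{\hor,\ex}(M,P))$ as in \eqref{alfa}, namely $\bar\al(v_F) = \int_M \al(v_F)\mu$. The verification that $\bar\al$ is basic for the $\Diff_{\vol}(M)$-principal bundle $\Emb_{\hor,\ex}(M,P)\to \Gr_{\hor,\ex}^{M,\mu}(P)$ is identical to the computation in the proof of Proposition \ref{pp}: horizontality uses $F^*\al=0$ and the volume-preserving condition, while invariance under $\bar\ph(F)=F\o\ph$ uses $\ph_*\mu=\mu$. Hence $\bar\al$ descends to a 1-form $\al_0$ on $\Gr_{\hor,\ex}^{M,\mu}(P)$. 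The computations $\al_0(E_{\Gr})=\int_M \al(E)\,\mu = \int_M \mu = a$ and $\mathbf{d}\al_0 = \bar p^*\om_0$ go through verbatim as in Proposition \ref{pp}, by functoriality of $\al\mapsto\bar\al$ and the fact that $\om_0$ is the reduced symplectic form coming from $\bar\om$.

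Finally I would apply the lemma preceding Proposition \ref{integra} to the circle bundle $\Gr_{\hor,\ex}^{M,\mu}(P)\to \Gr_{\iso,\ex}^{M,\mu}(S)$ equipped with the $S^1$-invariant 1-form $\al_0$ satisfying $\mathbf{d}\al_0=\bar p^*\om_0$ and $\al_0(E_{\Gr})=a \in \ZZ$. The lemma produces the prequantum bundle as $\Gr_{\hor,\ex}^{M,\mu}(P)/\ZZ_a$, with principal circle action \eqref{zact} and descended connection $(\al_0)_a$ whose curvature is $\om_0$, concluding the proof.

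I expect the main subtlety to be not in the differential-geometric identities (which carry over formally from Proposition \ref{pp}) but in checking that all the quotient spaces genuinely carry smooth Fr\'echet manifold structures in the exact setting and that the $\Diff_{\vol}(M)$-action on $\Emb_{\hor,\ex}(M,P)$ is free and proper. These structural points are implicit in the appendix and in the preceding lemma asserting that $p_*:\Emb_{\hor,\ex}(M,P)\to \Emb_{\iso,\ex}(M,S)$ is a principal $S^1$-bundle, so no new ideas are required; one only needs to note that $\Emb_{\hor,\ex}(M,P)$ is open in $\Emb_{\hor}(M,P)$ when viewed appropriately, so the slice theorems used in the non-exact case apply.
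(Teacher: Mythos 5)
Your proof is correct and follows precisely the route the paper intends: the paper itself offers no written proof of this proposition (it only remarks that the result is ``similar to Proposition \ref{integra}''), and your expansion --- Theorem \ref{tre} for the coadjoint-orbit identification, descent of $\bar\al$ to $\al_0$ with $\al_0(E_{\Gr})=a$ exactly as in Proposition \ref{pp}, and then the $\ZZ_a$-quotient lemma --- is the intended argument. The smooth-structure points you flag at the end are indeed the only structural subtleties, and they are disposed of in the appendix and in the Section \ref{s8} lemma on $p_*:\Emb_{\hor,\ex}(M,P)\to\Emb_{\iso,\ex}(M,S)$.
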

\color{black}


\paragraph{Circle embeddings.} All embeddings of the circle $M=S^1$ 
\textcolor{black}{(endowed with the standard volume form)} into $(S,-\dd\th)$ are isotropic,
but not all of them are exact isotropic. The right momentum map 
for the action of $\Diff_{\vol}(S^1)=\Rot(S^1)$ becomes
\[
\J^{\ex}_R:\Emb(S^1,S)\to \Om^1(S^1)/\dd\Om^0(S^1)\cong\RR, \quad\J^{\ex}_R(f)=\int_{S^1}f^*\th,
\]
so that symplectic reduction at zero gives the quotient space
\[
\left\{f\in\Emb(S^1,S):\int_{S^1}f^*\th=0 \right\}/\Rot(S^1).
\]
If $S$ is simply connected, then we get the space of circle embeddings
that enclose a piece of surface in $(S,-\dd\th)$ of vanishing symplectic area,
modulo circle rotations.
Its connected components are coadjoint orbits of the quantomorphism group,
by Theorem \ref{tre}.
\textcolor{black}{Moreover, they are also prequantizable since the total volume of  the circle is 1.}

\color{black} 

\section{Conclusion}

In this paper we have presented a systematic way to identify and describe a class of coadjoint orbits of the group of Hamiltonian diffeomorphisms of a symplectic manifold $(S,\om)$. Our systematic approach takes advantage of the properties of a dual pair of momentum maps arising in fluid dynamics, presented in \cite{Marsden-Weinstein} and studied in \cite{GBVi2011}. These properties are used for the identification of symplectic reduced space with coadjoint orbits. The symplectic reduced spaces turn out to be nonlinear Grassmannians (manifolds of submanifolds) with additional geometric structures.
By implementing symplectic reduction at the zero momentum we obtained the identification of coadjoint orbits with Grassmannians of isotropic volume submanifolds, {slightly} generalizing the results in \cite{W90} and \cite{L}. At the other extreme, by implementing symplectic reduction at a nondegenerate momentum we obtained the identification of connected components of the nonlinear symplectic Grassmannian with coadjoint orbits, thereby recovering the result of \cite{Haller-Vizman}. \textcolor{black}{We also commented on the intermediate cases which correspond to new classes of coadjoint orbits.}

The dual pair property also turned to be advantageous to concretely describe the tangent spaces to these Grassmannians a well as the orbit symplectic forms.
We have also shown that, whenever the symplectic manifold $(S,\om)$ is prequantizable, the coadjoint orbits that consist of isotropic submanifolds with  total volume $a\in\ZZ$ are prequantizable, by constructing explicitly the prequantum bundle. This result extends previous results obtained in \cite{W90} for  Lagrangian submanifolds. Finally, we considered in details the case of an exact symplectic manifold, and showed that it allows the treatment of submanifolds with nontrivial cohomology.

\color{black}
 

\appendix

\section{Smooth structures on nonlinear Grassmannians}\label{ax1}

In this appendix we show that several sets of submanifolds can be endowed with natural 
smooth structures modeled on Fr\'echet spaces. This is done
by an explicit construction of manifold charts.
{The manifolds $M$ and $S$ are assumed to be finite dimensional.}

\paragraph{The nonlinear Grassmannian of submanifolds of type $M$.} 
If $M$ is a submanifold of the Riemannian manifold $(S,g)$,
then the normal bundle $TS|_M/TM$
can be identified with the orthogonal subbundle $TM^{\per}\subset TS|_M$.  
A normal tubular neighborhood $U\subset S$ of $M$ can be built with the exponential map, namely with the diffeomorphism 
\begin{equation}\label{has}
h=\exp:V\subset TM^\per\to U\subset S
\end{equation}
that coincides with the identity on $M$.

An embedding of $M$ into $S$, obtained from a normal vector field $s\in\Ga(TM^\per)$ via the tubular neighborhood diffeomorphism as
$f^\perp=h\o s$, will be called a {\it normal embedding}.
Let $\U$ denote the set of all submanifolds of $S$ that are images of normal embeddings.
We can recover the normal embedding from its image submanifold $N\in\U$ 
with the help of an arbitrary embedding $f:M\to S$ such that $f(M)=N$,
namely $f^\perp=f\o\ps_f^{-1}$. Here $\ps_f=p\o h^{-1}\o f$
is a diffeomorphism of $M$, with $p: TS \rightarrow S$ denoting the projection.
It doesn't depend on the choice of the embedding $f$: 
starting with another embedding of $M$ onto $N\subset S$, $f\o\ps$
with $\ps\in\Diff(M)$,
we get the same normal embedding $f^\perp$ because $\ps_{f\o\ps}=\ps_f\o\ps$.
Of course the normal embedding for $N$ depends on the way $M$ sits in $S$.

We recall from  \cite{KrMi97} the construction of charts for $\Gr^M(S) = \Emb(M,S)/\Diff(M)$  using normal tubular neighborhoods. A chart around $M$ is defined on $\U\subset\Gr^M(S)$ with values in the Fr\'echet neighborhood of the zero section in $ \Gamma (TM ^\per)$
consisting of $V$-valued sections by
\begin{equation}\label{chi0}
\chi(N)=h^{-1}\o f^\perp
\;\text{ with inverse }\;\chi^{-1}(s)=(h\o s)(M),
\end{equation}
for $f^\perp$ the unique normal embedding such that $f^\perp(M)=N$.

Bundle charts on the $\Diff(M)$-bundle $\pi:\Emb(M,S)\to\Gr^M(S)$, $\pi(f)=f(M)$, 
can be taken of the following form:
\begin{equation}\label{square}
\tilde{ \chi }:  \pi^{-1}(\mathcal{U}) \subset \operatorname{Emb}(M,S) \rightarrow \mathcal{U}\x\Diff(M)\subset  \Gr^M(S) \times \Diff (M) , \quad 
\tilde{ \chi }(f)=( \pi(f) ,  \psi_f),
\end{equation}
where $ \psi_f= p \circ h^{-1} \circ f$. Its inverse reads
$\tilde{ \chi } ^{-1} ( N , \psi)= f^\perp \circ\psi$,
for $f^\perp$ the unique normal embedding such that $f^\perp(M)=N$.

\paragraph{The nonlinear Grassmannian of volume submanifolds.}
We build bundle charts on 
$\Gr^{M, \vol}(S)=\left\{(N,\nu ):N\in \Gr^M(S),\nu\in\Vol(N)\right\}$,
the nonlinear Grassmannian of volume submanifolds of type $M$,
with projection the forgetting map 
$$\pi_1:(N, \nu )\in \Gr^{M,\vol}(S) \mapsto N\in \Gr^M(S),$$ 
is a fiber bundle with fiber $ \operatorname{Vol}(M)$.
Bundle charts can be written using again the unique normal embedding
$f^\perp$ such that $f^\perp(M)=N$:
\begin{equation}\label{circle}
\bar{ \chi }: \pi_1^{-1}( \mathcal{U}) \subset \Gr^{M,\vol}(S) \rightarrow \mathcal{U}\x\Vol(M)\subset  \Gr^M(S) \times \Vol(M), \quad
 \bar{ \chi }(N,\nu)=(N,(f^\perp)^*\nu).
\end{equation}
Its inverse reads
$\bar{ \chi } ^{-1} ( N , \mu)= (N,(f^\perp)_*\mu)$.
In this way we get also manifold charts on the nonlinear Grassmannian 
of volume submanifolds.

The natural map 
\begin{equation}\label{voll}
\vol:\Gr^{M,\vol}(S)\to \RR,\quad\vol(N,\nu)=\int_N\nu
\end{equation}
is a submersion.
This can be seen in the coordinate chart of type \eqref{circle} since $\vol\o\bar\chi=\int_M\o\pr_2:\U\x\Vol(M)\to\RR$.

A regular value theorem (Theorem III.11 in \cite{NW}) extracted from Gloeckner's
implicit function theorem  (Theorem 2.3 in \cite{Gloeckner}) states that,
given a smooth map $F:\M\to \N$ into a Banach manifold $\N$, if
there is a continuous linear splitting of each tangent map $T_xF$ 
for $F(x)=y_0$, then $F^{-1}(y_0)$ is a submanifold of $\M$.
It can be applied to the submersion \eqref{voll} to show that the nonlinear Grassmannian 
$\Gr^{M,\mu}(S)$ of volume submanifolds of type $(M,\mu)$,
the preimage of the total volume $\int_M\mu$ by the map $\vol$,
is a submanifold of $\Gr^{M,\vol}(S)$.

\paragraph{The nonlinear Grassmannian of isotropic submanifolds.}
A manifold structure can be defined on the nonlinear Grassmannian $\Gr_{\iso}^M(S)$ 
of type $M$ isotropic submanifolds of $S$, 
by restriction of the manifold charts $ \chi$ of $ \Gr^M(S)$.
First we rescale the tubular neighborhood diffeomorphism \eqref{has}
to a diffeomorphism $h:TM^\per\to U\subset S$,
then we define $\om_{M}:={h}^*\om$ and the linear subspace
$$
\Ga_{\iso}(TM^\per)=\{s\in\Ga(TM^\per):s^*\om_{M}=0\}.
$$ 
The chart $\chi$ is a submanifold  chart:
$N\in\Gr_{\iso}^M(S)$ if and only if
$s=\chi(N)=h^{-1}\o f^\perp  \in\Ga_{\iso}(TM^\per)$,
since $s^*\om_{M}=(f^\perp)^*\om$.

The charts $\tilde\chi$ 
from \eqref{square} for the principal bundle of embeddings restrict to principal bundle charts on
the principal $\Diff(M)$-bundle of isotropic embeddings 
$\Emb_{\iso}(M,S)\to\Gr_{\iso}^M(S)$.
Endowed with the induced smooth structure, $\Emb_{\iso}(M,S)$ becomes
a submanifold of $\Emb(M,S)$.

\paragraph{The nonlinear Grassmannian of isotropic volume submanifolds.}
The manifold structure on the space
of isotropic volume submanifolds $\Gr_{\iso}^{M,\mu}(S)=\Emb_{\iso}(M,S)/\Diff_{\vol}(M)$ is presented in \cite{L}.
We proceed similarly to $\Gr^{M,\mu}(S)$.
We use the bundle charts $\bar\chi$ of $\Gr^{M,\vol}(S)$ from \eqref{circle}
to show that the space of all isotropic volume submanifolds of type $M$ 
(with arbitrary volume forms on $M$ allowed), denoted by
$\operatorname{Gr}_{\iso}^{M, \vol }(S)$, is a submanifold of $\Gr^{M,\vol}(S)$.
Then we use the restriction of the submersion $\vol$ to the submanifold $\Gr_{\iso}^{M,\vol}(S)$
to show that $\Gr_{\iso}^{M,\mu}(S)$ is a submanifold.
A similar reasoning works for the exact versions:  $\operatorname{Gr}_{\iso,\ex}^M(S)$,
$\operatorname{Gr}_{\iso, \ex}^{M, \vol}(S)$, and $\operatorname{Gr}_{\iso, \ex}^{M, \mu}(S)$, using the linear subspace 
$ \Gamma _{\iso,\ex}(TM^\per)$ instead of $ \Gamma _{\iso}(TM^\per)$,
and also for the exact version  $\operatorname{Gr}_{\hor,\ex}^{M, \mu }(P)$.


\paragraph{The nonlinear Grassmannian of horizontal submanifolds.}
Let  $p:(P,\al)\to (S,\om)$ be a prequantum bundle
with connection $\al$ and curvature the symplectic form $\om$.
Using the rescaled tubular neighborhood diffeomorphism $h:TM^\per\to U\subset P$ 
for the submanifold $M$ of $P$, 
we define the 1-form $\al_{M}:={h}^*\al$ on $TM^\per$ and the linear subspace
$$
\Ga_{\hor}(TM^\per)=\{s\in\Ga(TM^\per):s^*\al_{M}=0\}.
$$ 
Now the chart $\chi$ of $\Gr^M(P)$ is a submanifold  chart for $\Gr_{\hor}^M(P)$:
$Q\in\Gr_{\hor}^M(P)$ if and only if
$s=\chi(Q)=h^{-1}\o F^\perp  \in\Ga_{\hor}(TM^\per)$,
since $s^*\al_{M}=(F^\perp)^*\al$.

The principal bundle
charts $\tilde\chi$ for $\Emb(M,P)$ from \eqref{square} 
restrict to principal bundle charts on
the principal $\Diff(M)$-bundle of horizontal embeddings 
$\Emb_{\hor}(M,P)\to\Gr_{\hor}^M(P)$. Moreover, $\Emb_{\hor}(M,P)$
becomes a submanifold of $\Emb(M,S)$.

We use the bundle charts $\bar\chi$ of $\Gr^{M,\vol}(P)$
to show first that the space  $\Gr_{\hor}^{M,\vol}(P)$
of horizontal volume submanifolds of type $M$ 
(with arbitrary volume forms on $M$ allowed) is a submanifold of $\Gr^{M,\vol}(P)$.
Then we use the restriction of the submersion \eqref{voll} 
to it, to show that the nonlinear Grassmannian $\Gr_{\hor}^{M,\mu}(P)$
of  horizontal volume submanifolds of type $(M,\mu)$ is a submanifold
of $\Gr_{\hor}^{M,\vol}(P)$.

\paragraph{Acknowledgments.}
\textcolor{black}{We are grateful to the referee for very useful suggestions that improved the presentation of the paper.
}
Both authors were partially supported by the LEA Franco-Roumain ``MathMode"
and by the Erwin Schr\"odinger Institute in Vienna. 
Fran\c{c}ois Gay-Balmaz was also partially supported by the ANR project GEOMFLUID, ANR-14-CE23-0002-01. Cornelia Vizman was
also partially supported by CNCS UEFISCDI, project number PN-II-ID-PCE-2011-3-0921.

{
\footnotesize

\bibliographystyle{new}

\begin{thebibliography}{300}

\bibitem{Banyaga}
Banyaga, A., {\it The Structure of Classical Diffeomorphism Groups}, 
Kluwer Academic Publishers, 1997.

\bibitem{Balleier-Wurzbacher}
Balleier, C. and T. Wurzbacher, \textit{On the geometry and quantization of symplectic Howe pairs},
{Math. Z.} {\bf 271} (2012), 577--591.

\bibitem{Donaldson}
Donaldson, S.K.,
\textit{Moment maps and diffeomorphisms},
{Asian J. Math} {\bf 3} (1999), 1--16

\bibitem{GBVi2011}
Gay-Balmaz, F. and C. Vizman, \textit{Dual pairs in fluid dynamics}, 
{Ann. Global Anal. Geom.} {\bf 41} (2012), 1--24.

\bibitem{GBVi2014}
Gay-Balmaz, F. and C. Vizman,
 \textit{Dual pairs for nonabelian fluids}, 
{Geometry, mechanics, and dynamics, 
The Legacy of Jerry Marsden, Fields Inst. Commun.}, \textbf{73} (2014), 107--135.

\bibitem{GBVi2013}
Gay-Balmaz, F. and C. Vizman, 
\textit{Principal bundles of embeddings and nonlinear Grassmannians}, 
{Ann. Global Anal. Geom.} {\bf 46} (2014), 293--312.

\bibitem{GBVi2012}
Gay-Balmaz, F. and C. Vizman, \textit{A dual pair for free boundary fluids}, 
{Int. J. Geom. Methods Mod. Phys.} \textbf{12} (2015), 1550068.

\bibitem{Gloeckner}
Gloeckner, H.,
\textit{Implicit functions from topological vector spaces to Banach spaces},
Israel Journal Math., {\bf 155} (2003), 205--252.

\bibitem{Haller-Vizman}
Haller, S. and C. Vizman,
\textit{Non--linear Grassmannians as coadjoint orbits},
{Math. Ann.} \textbf{329} (2004), 771--785.

\bibitem{Hitchin}
Hitchin, N.,
{\it Lectures on special Lagrangian submanifolds},
in Winter School on Mirror Symmetry, Vector Bundles and Lagrangian Submanifolds
1999, Cumrun Vafa (Editor), Shing-Tung Yau (Editor), 2001. 

\bibitem{Ismagilov}
Ismagilov, R.~S.,
{\it Representations of infinite-dimensional groups},
Translations of Mathematical Monographs {\bf 152},
American Mathematical Society, Providence, RI, 1996.

\bibitem{IsLoMi2006}
Ismagilov, R.~S., M. Losik, and P.~W. Michor, 
\textit{A 2-cocycle on a group of symplectomorphisms}, {Moscow Math. J.} \textbf{6} (2006), 307--315.

\bibitem{JV1}
B. Janssens and C. Vizman,
{\it Universal central extension of the Lie algebra of Hamiltonian vector fields}, 
Int.\ Math.\ Res.\ Notices {\bf 2016:16} (2016), 4996--5047.

\bibitem{JV2}
B. Janssens and C. Vizman,
{\it Integrability of central extensions of the Poisson
Lie algebra via prequantization}, accepted for J. Symp. Geom. (2017)

\bibitem{Kostant}
Kostant, B.,
{\it Quantization and unitary representations},
{Lectures in modern analysis and applications III}, 87--208,
Lecture Notes in Math. {\bf 170}, Springer, Berlin, 1970.

\bibitem{KrMi97}
Kriegl, A., and P.~W. Michor,
{\it The Convenient Setting of Global Analysis},
{Mathematical Surveys and Monographs} {\bf 53},
American Mathematical Society, Providence, RI, 1997.

\bibitem{L}
Lee, B., 
{\it Geometric structures on spaces of weighted submanifolds},
{SIGMA} {\bf 5} (2009), 099, 46 pages.

\bibitem{Libermann-Marle}
{Libermann, P. and C.-M. Marle}, 
{\it Symplectic Geometry and Analytical Mechanics}, 
D. Reidel Publishing Company, 1987.

\bibitem{Marsden-Weinstein}
Marsden, J. E. and A. Weinstein, \textit{Coadjoint orbits, vortices, and Clebsch variables for incompressible fluids}, {Phys. D} \textbf{7} (1983), 305--323.

\bibitem{MDSa98}
McDuff, D. and  D. Salamon,
\textit{Introduction to Symplectic Topology}, 
Second Edition, Oxford Math. Monogr., 2005.

\bibitem{Molitor}
Molitor, M., 
{\it La Grassmannienne non-lin\'eaire comme vari\'et\'e Fr\'ech\'etique homog\`ene},
{J. Lie Theory} {\bf 18} (2008), 523--539.

\bibitem{Neeb}
Neeb, K.-H.,
{\it Non-abelian extensions of topological Lie algebras},
{Communications in Algebra} {\bf 34} (2006), 991--1041.

\bibitem{NV}
Neeb, K.-H. and C. Vizman,
{\it Flux homomorphism and principal bundles over infinite dimensional manifolds},
{Monatsh. Math.}, {\bf 139} (2003), 309--333.

\bibitem{NW}
Neeb, K.-H. and F. Wagemann,
{\it Lie group structures on groups of smooth and holomorphic maps on non-compact manifolds},
{Geometriae Dedicata}, {\bf 134} (2008), 17--60.

\bibitem{Roger}
Roger, C.,
{\it Extensions centrales d'alg\`ebres et de groupes de Lie de dimension infinie,
alg\`ebre de Virasoro et g\'en\'eralisations},
{Rep. Math. Phys.} {\bf 35} (1995), 225-266.

\bibitem{Souriau}
Souriau, J.-M.,
\textit{Structure des syst\`emes dynamiques}, Dunod, 1970.

\bibitem{Vizman} 
Vizman, C.,
{\it Induced differential forms on manifolds of functions}, 
{Archivum Mathematicum}, {\bf 47} (2011), 201--215.

\bibitem{W71}
Weinstein, A., {\it Symplectic manifolds and their Lagrangian submanifolds}, 
{Adv. Math.} \textbf{ 6} (1971), 329--346.

\bibitem{Weinstein}
Weinstein, A., {\it The local structure of Poisson manifolds}, 
{J. Diff. Geom.} \textbf{18} (1983), 523--557.

\bibitem{W90}
Weinstein, A., {\it Connections of Berry and Hannay type for moving Lagrangian submanifolds}, 
{Adv. Math.} \textbf{82} (1990), 133--159.

\end{thebibliography}
\addcontentsline{toc}{section}{References}

}
\end{document}